\documentclass[print,wide,eng]{draft}
\usepackage{placeins}
\crefname{enumi}{Assumption}{Assumptions}
\Crefname{enumi}{Assumption}{Assumptions}
\hypersetup{pdftitle={From Continuous to Fully Discrete Carleman Estimates},pdfauthor={Qi Lu and Yu Wang}}
\title{\bf From Continuous to Fully Discrete Carleman Estimates:\\
A Transfer Principle for Parabolic Schemes}
\author{Qi L\"u\thanks{School of Mathematics, Sichuan University, Chengdu 610064, China. E-mail: \texttt{lu@scu.edu.cn}. Qi L\"u is supported by the National Natural Science Foundation of China under grant 12025105.}
\and
Yu Wang\thanks{School of Mathematics, Southwest Jiaotong University, Chengdu 611756, China. E-mail: \texttt{yuwangmath@163.com}. Yu Wang is supported by the National Natural Science Foundation of China under grant 12401589 and by Sichuan Science and Technology Program under grant 2026NSFSC0777.}}
\date{}

\hypersetup{linkcolor=black,citecolor=black,urlcolor=black}

\begin{document}
\maketitle
\begin{abstract}
Direct approaches to discrete parabolic Carleman estimates often rely on weighted identities tailored to the discrete operator. We prove an abstract transfer theorem that derives such estimates for fully discrete backward Euler schemes with conforming spatial reconstructions from a continuous Carleman inequality together with suitable weighted approximation and compatibility properties. For a class of time-independent positive self-adjoint operators, the key construction assigns to each discrete state a shifted continuous auxiliary problem whose discrete solution is exactly the prescribed state. Weighted space-time error estimates, uniform over the full discrete load space, then transfer the inequality while preserving the residual and observation terms of the original scheme. This separates the PDE-specific Carleman analysis from the numerical verification required for each discretization. We verify the hypotheses for the heat equation discretized by consistent-mass $P_1$ finite elements on locally graded meshes in two and three dimensions and by standard Cartesian finite differences in any fixed dimension. For the heat equation, the transferred estimate is valid for Carleman parameters up to the scale $\min\{h^{-4/5},(\delta t)^{-2/5}\}$. For bounded real-valued space-time potentials, the resulting estimates yield relaxed observability and approximate null controls with uniformly bounded cost and exponentially small terminal errors under independent spatial and temporal refinement; weak accumulation points of the reconstructed controls are continuous null controls.
\end{abstract}

\noindent\textbf{Keywords.} Carleman estimate; finite element method; finite differences; backward Euler; numerical controllability; weighted error estimate.

\medskip
\noindent\textbf{2020 Mathematics Subject Classification.} 65M12, 65M60, 93B05, 35K05, 35K35.

\section{Introduction}\label{sec:intro}

Partial differential equations with distributed controls arise in many applications where the state is to be steered to a prescribed target by acting only on a small subregion of the domain. For parabolic equations, controllability and the cost of the control are classically characterized by observability estimates for the adjoint equation, which in turn follow from global Carleman estimates. When the problem is discretized for numerical computation, however, these properties need not survive uniformly: a scheme that is consistent and stable may still lose uniform controllability as the mesh and the time step are refined. The present paper addresses this issue for fully discrete parabolic schemes by transferring a continuous Carleman estimate directly to the discrete setting.

This introduction is organized as follows. In \Cref{intro:heat} we formulate the numerical control problem and state the discrete control bounds that motivate the paper. In \Cref{intro:transfer}, we recall the continuous observability and Carleman estimates from which these bounds will be derived, and explain the obstructions to a direct perturbation argument. The transfer mechanism is then illustrated on an elliptic model in \Cref{sec:elliptic-transfer}, and the relations to previous work are summarized in \cref{intro:comparison}.

\subsection{The numerical control problem}\label{intro:heat}

Let $\Omega\subset\mathbb R^d$ be a bounded convex polytope, and let $\omega\Subset\Omega$ be a nonempty open set.
Fix $T>0$, $M\geq0$, and  $b\in L^\infty((0,T)\times\Omega;\mathbb R)$ with $\|b\|_{L^\infty}\leq M$.
Consider the controlled heat equation:
\begin{align}\label{intro:heat-model}
\begin{cases}
\partial_t y-\Delta y+b(t,x)y=\mathbf1_\omega u,& \text{ in } (0,T)\times\Omega,\\
y=0,& \text{ on } (0,T)\times\partial\Omega,\\
y(0)=y_0\in L^2(\Omega).
\end{cases}
\end{align}
The null controllability result~\cite{Fursikov1996} provides, for every $y_0$, a control $u\in L^2((0,T)\times\omega)$ such that
$ y(T)=0$.
The control satisfies the cost bound
\begin{align}\label{intro:continuous-cost}
\|u\|_{L^2((0,T)\times\omega)}\leq C\|y_0\|_{L^2(\Omega)}.
\end{align}
Here $C$ depends only on $\Omega,\omega,T,M$.
Thus the target is reached with a control cost bounded linearly in the size of the initial datum.

Given the above null controllability result for \eqref{intro:heat-model}, a natural question is how to numerically construct a control that drives the system state to zero. To address this question, one can follow two approaches: either formulate a control construction for the continuous equation and then discretize it, or first discretize the state equation and then construct controls for the resulting discrete system; see \cite[Section~1]{Zuazua2005}. In this paper, we adopt the latter, discretize-then-control approach. A key issue, however, is that convergence of the forward scheme alone does not guarantee that the associated controls remain bounded under refinement. Therefore, the numerical control problem is to keep the control cost bounded while simultaneously driving the terminal error to zero as the spatial mesh and the time step are refined.

To make this discussion concrete, we focus on dimensions $d=2,3$ and let $V_h\subset H_0^1(\Omega)$ be the conforming $P_1$ space on a uniformly shape-regular mesh satisfying the grading condition \cref{eq:grading}, with maximum element diameter $h$. Define $A_h:V_h\to V_h$ by
\begin{align*}
(A_hz_h,v_h)_{L^2(\Omega)}=(\nabla z_h,\nabla v_h)_{L^2(\Omega)},
\qquad z_h,v_h\in V_h.
\end{align*}
For an integer $N\geq2$, write $\delta t=T/N$, and set
$b^n=(\delta t)^{-1}\int_{(n-1)\delta t}^{n\delta t}b(t,\cdot)\,dt$.
Given $y^0=y_{0,h}\in V_h$ and controls $u^n\in L^2(\omega)$, the backward-Euler scheme seeks $y^n\in V_h$ satisfying
\begin{align}\label{intro:discrete-model}
\biggl(\frac{y^n-y^{n-1}}{\delta t},v_h\biggr)_{L^2(\Omega)}
+(A_hy^n,v_h)_{L^2(\Omega)}+(b^ny^n,v_h)_{L^2(\Omega)}
=\int_\omega u^nv_h\,dx,
\qquad v_h\in V_h,
\end{align}
for $1\leq n\leq N$.

For sufficiently small $h$ and $\delta t$, the results in \cref{cor:scales,prop:physicalC} provide controls satisfying
\begin{align}\label{intro:control-bounds}
\delta t\sum_{n=1}^N\|u^n\|_{L^2(\omega)}^2
&\leq C\|y_{0,h}\|_{L^2(\Omega)}^2.
\end{align}
Here and in the terminal estimate below, the constants may also depend on the fixed mesh regularity and grading bounds, but are independent of $h$, $\delta t$, and $y_{0,h}$.
This is the discrete analogue of the control-cost bound in \cref{intro:continuous-cost}: the control norm is bounded linearly by the norm of the initial datum, uniformly under spatial and temporal refinement. Thus the control cost remains uniformly bounded.

Moreover, the same controls also satisfy the terminal error estimate
\begin{align}\label{intro:terminal-error}
\|y^N\|_{L^2(\Omega)}
&\leq C\exp\!\big[-c\min\{h^{-4/5},(\delta t)^{-2/5}\}\big]
\|y_{0,h}\|_{L^2(\Omega)}.
\end{align}
Here $c>0$ is also independent of $h$ and $\delta t$.
Thus, for a family of initial data bounded in $L^2(\Omega)$, the terminal estimate gives
\begin{align*}
\lim_{h,\delta t\to0}\|y^N\|_{L^2(\Omega)}=0,
\qquad N=T/\delta t,
\end{align*}
with an exponentially small bound under independent spatial and temporal refinement.

As a consequence of \cref{intro:control-bounds}, for approximations of a fixed initial datum, the piecewise constant control reconstructions remain bounded in $L^2((0,T)\times\omega)$ and therefore admit weakly convergent subsequences as $h,\delta t\to0$.
Together with consistency of the scheme, \cref{intro:terminal-error} ensures that every such weak limit is a null control for \cref{intro:heat-model} (see \cref{prop:control-convergence} for more details).

\subsection{From continuous to discrete Carleman estimates}\label{intro:transfer}

To derive control bounds for the discrete problem, it is natural to work with the adjoint equation and to seek a discrete counterpart of the continuous observability inequality. Classically, the continuous inequality is deduced from a global parabolic Carleman estimate. A natural attempt is therefore to apply the continuous Carleman estimate directly to a reconstruction of the discrete state and to absorb the consistency error as a small perturbation. However, for fully discrete schemes this direct perturbation argument is not sufficient. The obstruction is twofold: the continuous estimate does not control the highest discrete modes uniformly, and consistency alone does not make the perturbation small over the full discrete space. These difficulties motivate the transfer principle developed in this paper, in which each discrete state is embedded into a shifted continuous auxiliary problem and the continuous Carleman inequality is transferred back to the original scheme through weighted numerical error estimates.

Write $Pp=-\partial_t p-\Delta p+b(t,x)p$ for the continuous adjoint operator. By duality~\cite{Lions1988}, the null controllability of \eqref{intro:heat-model} and the control cost \cref{intro:continuous-cost} are equivalent to an observability estimate for the backward equation
\begin{align}\label{intro:continuous-adjoint}
\begin{cases}
Pp=0,& \text{ in }(0,T)\times\Omega,\\
p=0,& \text{ on }(0,T)\times\partial\Omega,\\
p(T)=p_T\in L^2(\Omega).
\end{cases}
\end{align}
Specifically, every solution satisfies
\begin{align}\label{intro:continuous-observability}
\|p(0)\|_{L^2(\Omega)}^2
\leq C\int_0^T\|p(t)\|_{L^2(\omega)}^2\,dt.
\end{align}

At the continuous level, observability is classically obtained from a global parabolic Carleman estimate~\cite{Fursikov1996}. Let $\alpha<0$ and $\xi>0$ be standard parabolic weight functions. For all sufficiently large $s$ and all sufficiently regular functions $p$ vanishing on $(0,T)\times\partial\Omega$, the gradient and zeroth-order terms satisfy
\begin{align}\label{intro:classical-parabolic-carleman}
&\int_0^T\!\int_\Omega e^{2s\alpha}
\bigl(s\xi|\nabla p|^2+s^3\xi^3|p|^2\bigr)\,dx\,dt
\leq C\int_0^T\!\int_\Omega e^{2s\alpha}|Pp|^2\,dx\,dt
+Cs^3\int_0^T\!\int_\omega e^{2s\alpha}\xi^3|p|^2\,dx\,dt.
\end{align}
This, together with an energy estimate,  yields \cref{intro:continuous-observability} and hence null controllability by duality.

These observations suggest a direct route to numerical control. A consistent discretization approximates the continuous operator on smooth functions, so one might try to apply \cref{intro:classical-parabolic-carleman} to a reconstruction of a discrete state and absorb the consistency error as a small perturbation. To recover \cref{intro:control-bounds,intro:terminal-error}, the error must be absorbable over the full discrete space and for a range of $s$ growing as the mesh is refined. As already indicated, two obstructions to this approach are explained in \Cref{sec:elliptic-transfer}.

To overcome these obstructions, we prove an abstract transfer theorem, \cref{thm:main}, for fully discrete schemes satisfying weighted approximation and compatibility assumptions. It yields a Carleman estimate over the full discrete space and applies, in particular, to the finite-element and finite-difference schemes verified in this paper.

To illustrate this general result concretely, we return to the finite-element scheme in \Cref{intro:heat} and define the discrete adjoint operator by
\begin{align*}
(P_{h,\delta t}p)^n
:=\frac{p^n-p^{n+1}}{\delta t}+A_hp^n+\Pi_h(b^{n+1}p^n),
\qquad 0\leq n<N,
\end{align*}
where $\Pi_h$ is the $L^2(\Omega)$ projection onto $V_h$.
We use regularized weight functions with matching time endpoints and write $\alpha^n(x)=\alpha(n\delta t,x)$ and $\xi^n=\xi(n\delta t)$, with $\max_{[0,T]}\xi=1$. The weight conditions and normalization are specified in \cref{eq:qclass}.
For sufficiently small $h,\delta t$, \cref{thm:main,cor:scales} give, for every sequence $p^n\in V_h$ with $p^N=p^0$,
\begin{align}\label{intro:parabolic-discrete-carleman}
&\delta t\sum_{n=0}^{N-1}\int_\Omega e^{2s\alpha^n}
\bigl(s\xi^n|\nabla p^n|^2+s^3(\xi^n)^3|p^n|^2\bigr)\,dx
\notag
\\
&
\leq C\delta t\sum_{n=0}^{N-1}\int_\Omega e^{2s\alpha^n}
|(P_{h,\delta t}p)^n|^2\,dx
+Cs^3\delta t\sum_{n=0}^{N-1}\int_\omega e^{2s\alpha^n}(\xi^n)^3|p^n|^2\,dx,
\end{align}
provided $s\min\limits_{[0,T]}\xi\geq s_0$ and
$
s_0\leq s\leq\kappa\min\{h^{-4/5},(\delta t)^{-2/5}\}$.
Here $C,\kappa,s_0>0$ are independent of $h,\delta t,s$ and the allowed weights; $s_0$ is chosen sufficiently large depending on $M$ to absorb the potential. The two terms on the right are the residual and observation of the original scheme, just as in \cref{intro:classical-parabolic-carleman}.

For a prescribed terminal state $p_T\in V_h$, the discrete adjoint of \cref{intro:discrete-model} reads
\begin{align}\label{intro:discrete-adjoint}
(P_{h,\delta t}p)^n=0,\qquad 0\leq n<N,
\quad p^N=p_T.
\end{align}
To apply \cref{intro:parabolic-discrete-carleman}, we cut off $p$ near the time endpoints. Since $P_{h,\delta t}p=0$, the residual of the cutoff sequence contains only cutoff differences. Choosing the weights at the admissible mesh scale and combining \cref{intro:parabolic-discrete-carleman} with a discrete energy estimate yields
\begin{align}\label{intro:observability}
\|p^0\|_{L^2(\Omega)}^2
\leq C\Bigl(
{\delta t}\sum_{n=0}^{N-1}\|p^n\|_{L^2(\omega)}^2
+e^{-2c\min\{h^{-4/5},{(\delta t)}^{-2/5}\}}\|p^N\|_{L^2(\Omega)}^2
\Bigr),
\end{align}
where $C,c$ are independent of $h$ and ${\delta t}$. Compared with \cref{intro:continuous-observability}, this estimate has an exponentially small terminal term arising from the endpoint cutoff, hence the name \emph{relaxed observability}.
Minimizing a quadratic functional of the terminal adjoint datum then gives the control bounds in \cref{intro:control-bounds,intro:terminal-error} (see \cref{prop:physicalO,prop:physicalC}).
With suitable filtering of the terminal adjoint data, the remainder can be absorbed, yielding uniform observability  (see \cref{thm:filtered-observability}).

The heat-equation example above illustrates the general transfer framework developed in this paper. In summary, our main contributions are as follows.
\begin{enumerate}[itemsep=3pt]
\item We establish an abstract framework for transferring continuous Carleman estimates to fully discrete schemes through weighted approximation estimates uniform over the full discrete space.
\item The transferred inequality retains the residual and observation of the original scheme and allows distinct physical and reconstruction masses and a small weighted stiffness defect.
\item We verify the heat-equation framework for locally graded consistent-mass $P_1$ finite elements in dimensions two and three and for standard Cartesian finite differences in any fixed dimension.
\item The resulting relaxed observability estimates yield approximate null controls with uniformly bounded cost, exponentially small terminal errors, and continuous null controls as weak accumulation points, with no coupling between the spatial and temporal refinement parameters.
\end{enumerate}

\subsection{The transfer mechanism: an elliptic model}\label{sec:elliptic-transfer}

We use the Poisson operator as a model to isolate the two obstructions to a direct perturbation argument and to motivate the construction that overcomes them.

Fix $1<p<\infty$ in the Dirichlet regularity range of the Laplacian. The classical $W^{1,p}$ perturbation argument associated with Meyers' theory~\cite{Meyers1963} allows $\operatorname{div}(\epsilon\nabla v)$ to be absorbed whenever $C(p)\|\epsilon\|_{L^\infty}$ is sufficiently small. In the Carleman setting, however, the analogous threshold depends on the large parameter: the argument in \cite[Section~6.1, (6.5)]{Zuazua2007} requires $Cs\|\epsilon\|_{L^\infty}^2<1$. Thus a growing range of $s$ requires a correspondingly smaller perturbation.

Consistency alone, however, does not provide such smallness uniformly over a discrete space. A simple example illustrates this obstruction. For centered differences on $(0,1)$ with $h=1/J$, the continuous and discrete eigenvalues associated with the sine mode $v_j(x)=\sin(j\pi x)$ are~(see \cite[Section~4.2]{Zuazua2005})
\begin{align*}
\lambda_j=(j\pi)^2,\qquad
\lambda_{h,j}=4h^{-2}\sin^2(j\pi h/2),\qquad 1\leq j<J.
\end{align*}
Writing $\epsilon_{h,j}=\lambda_{h,j}/\lambda_j-1$ for the relative error, we have
\begin{align*}
-\Delta v_j=\lambda_{h,j}v_j+\operatorname{div}(\epsilon_{h,j}\nabla v_j).
\end{align*}
For each fixed $j$, $\lim\limits_{J\to\infty}\epsilon_{1/J,j}=0$, whereas the highest mode satisfies $\lim\limits_{J\to\infty}|\epsilon_{1/J,J-1}|=1-4/\pi^2>0$. Hence the preceding absorption argument cannot yield an admissible Carleman range that grows under mesh refinement.

This obstruction motivates a different comparison. Take $\Omega=(0,1)^2$ and the finite-element space $V_h$, operator $A_h$, and mesh assumptions of \Cref{intro:heat}. Let $\rho$ be the spatial weight supplied by \cref{prop:continuousCE} for a nonempty open set $\omega_0\Subset\omega$, and set $\theta=e^{s\rho}$. Given any $u_h\in V_h$, put $f_h=A_hu_h$ and solve
\begin{align}\label{intro:poisson-auxiliary}
(-\Delta+Ks^2)U=f_h+Ks^2u_h\quad\text{in }\Omega,
\qquad U=0\quad\text{on }\partial\Omega.
\end{align}
The constant $K$ is fixed sufficiently large, independently of $h$ and $s$, to provide weighted coercivity. By the definition of $A_h$, the prescribed state $u_h$ is exactly the Galerkin solution of this auxiliary problem.

With $e=U-u_h$, the static case of \cref{lem:spatialEuler}, with $m=1$ and $q\equiv s$, gives
\begin{align}\label{intro:weighted-error}
\|\theta\nabla e\|_{L^2(\Omega)}
+h^{-1}\|\theta e\|_{L^2(\Omega)}
\leq Ch\|\theta(f_h+Ks^2u_h)\|_{L^2(\Omega)},
\end{align}
provided $hs$ is sufficiently small. The constant is uniform over all $u_h\in V_h$, including mesh-scale oscillations. The essential cancellation is
\begin{align}\label{intro:poisson-residual}
-\Delta U=f_h-Ks^2e.
\end{align}
Consequently, applying the continuous Carleman estimate to $U$ introduces the original discrete residual $f_h$ and a term controlled by the comparison error. Moreover, estimate \cref{intro:weighted-error} controls the difference between the continuous and discrete energy and observation terms. The most restrictive state-error contribution is of order $h^4s^5$, giving the sufficient range $s_0\leq s\leq ch^{-4/5}$. For $h\leq1$, choosing $c>0$ sufficiently small after fixing $K$ and the estimate constants absorbs the state error and bounds the residual coefficient by $Cs^{-1}$, yielding
\begin{align}\label{intro:discrete-carleman}
\|\theta\nabla u_h\|_{L^2(\Omega)}^2
+s^2\|\theta u_h\|_{L^2(\Omega)}^2
&\leq C\Bigl(s^{-1}\|\theta A_hu_h\|_{L^2(\Omega)}^2
+\|\theta\nabla u_h\|_{L^2(\omega)}^2
+s^2\|\theta u_h\|_{L^2(\omega)}^2\Bigr),
\end{align}
where $C,c,s_0>0$ are independent of $h,s,u_h$, and $c$ is sufficiently small. The continuous elliptic Carleman estimate follows from \cref{prop:continuousCE} with $q\equiv s$ and a time-independent function, after division by $s$; its observation on $\omega_0$ is bounded by the displayed observation on $\omega$.

The role of the auxiliary problem is therefore to embed each discrete state as the Galerkin solution of a continuous problem to which the Carleman estimate applies. Weighted numerical error estimates then transfer the inequality to the original scheme. The parabolic argument combines the temporal and spatial comparisons in \Cref{sec:time,sec:error} with the absorption argument in \Cref{sec:transfer}.

\subsection{Relation to previous work}\label{intro:comparison}

A first approach to uniform numerical controllability consists in deriving Carleman estimates directly at the discrete level, by developing weighted identities and estimating remainders within the discrete calculus.
Elliptic difference estimates were established in \cite{Boyer2010,Boyer2010a}, followed by semi-discrete parabolic, time-discrete, and fully discrete estimates in \cite{Boyer2014,BH,GCH}.
By contrast, we derive the discrete Carleman estimate from its continuous counterpart through weighted approximation estimates that hold uniformly over the full discrete space.
This separation of the PDE-specific Carleman analysis from the numerical estimates required for each discretization yields a common transfer mechanism for the finite-element and finite-difference realizations considered here.

A second line of work, rather than deriving discrete Carleman estimates directly, addresses the high-frequency obstruction through spectral truncation, filtering, and multilevel constructions (see \cite{Zuazua2006ICM}). In particular, Boyer, Hubert, and Le Rousseau~\cite{BHLR11} developed an abstract framework for spatial Galerkin approximations and temporal discretization based on a uniform partial spectral inequality and the Lebeau--Robbiano strategy. Under their spectral and discretization assumptions,   \cite[Theorem~3.5]{BHLR11} provides uniformly bounded controls that annihilate a low-frequency spectral projection exactly and leave an exponentially small full terminal state. The resulting control conclusions are therefore closely related, but the underlying estimates differ: their argument uses a partial spectral inequality, whereas ours transfers the Carleman inequality itself through weighted numerical approximation properties. In the autonomous backward-Euler setting, their results also allow for independent refinement after reducing the usable spectral range (see \cref{rem:spectral-comparison}).

In contrast to the spectral approach, in our framework the transferred estimate itself determines a sufficient threshold for filtering. In \Cref{sec:filtering}, the remainder in \cref{intro:observability} is absorbed on terminal spaces whose discrete Rayleigh quotient is sufficiently small relative to the available Carleman scale. This restriction can be realized by compatible nested $P_1$ coarse spaces and Cartesian coarse-grid interpolation, without a discrete eigendecomposition. The corresponding controls simultaneously annihilate the retained terminal projection, have cost bounded uniformly in the discretization parameters, and leave an exponentially small full terminal state for the bounded space-time potentials considered here.

Beyond these Carleman-based and spectral approaches, complementary methods draw on other continuous control and stability arguments. Boyer and Hern\'andez-Santamar\'ia~\cite{BoyerHernandezSantamaria2026} adapt the moments method to time-discrete boundary-controlled parabolic systems by constructing and estimating discrete biorthogonal families.
The penalized Hilbert Uniqueness Method (HUM), which turns relaxed observability estimates into numerically computable controls, has also been studied systematically in the discrete setting (see \cite[Sections~4--6]{BHLR11} and \cite{Boyer2013}).

Discrete Carleman estimates have also found applications in numerical inverse problems.
For semi-discrete parabolic equations, they yield stability estimates for inverse source and coefficient problems~\cite{Lecaros2025a}.

Finally, we outline the structure of the paper. The abstract framework and transfer theorem are stated in \Cref{sec:interfaces}. Weighted temporal and spatial comparisons are proved in \Cref{sec:time,sec:error}, and the transfer is completed in \Cref{sec:transfer}. The finite-element and finite-difference realizations are verified in \Cref{sec:realizations}. Relaxed and filtered observability, control, and weak convergence are treated in \Cref{sec:control}. Numerical illustration follows in \Cref{sec:numerics}. The appendices verify the continuous analytic assumptions, the difference interfaces, the consistency statements used in the control limit, and the auxiliary filtering estimates.

\section{Abstract framework and transfer theorem}\label{sec:interfaces}

This section develops the abstract framework that allows us to transfer a continuous Carleman estimate to a fully discrete scheme. We first collect the continuous analytic assumptions, then introduce the general fully discrete scheme together with the reconstruction and transfer operators that link it to the continuous problem, and finally state the numerical assumptions ensuring weighted approximation, stiffness compatibility, and observation compatibility. Under these hypotheses, the main transfer theorem yields a Carleman estimate over the full discrete space while preserving the residual and observation terms of the original scheme. Its proof is given in Section~\ref{sec:transfer}, after the necessary temporal and spatial comparison estimates have been established in Sections~\ref{sec:time} and~\ref{sec:error}.

\subsection{Continuous analytic assumptions and Carleman estimate}

Fix $d\geq1$, let $\Omega\subset\mathbb R^d$ be a bounded Lipschitz domain, set $H=L^2(\Omega)$, and fix an integer $m\geq1$.

Fix $T>0$. In the auxiliary estimates we impose matching time endpoints in order to cancel temporal boundary terms and to solve both the forward and adjoint auxiliary problems. The original control and adjoint systems have prescribed initial or terminal data. The auxiliary estimate is applied to the adjoint only after a time cutoff. Endpoint values of functions in $H^1(0,T;H)$ are understood as traces in $H$.
Set $\mathcal Q=(0,T)\times\Omega$ and, for a measurable subset $E\subset\Omega$, write $\mathcal Q_E=(0,T)\times E$.

Fix $\beta>0$, $q_c\geq1$, and $\rho\in C^\infty(\overline\Omega)$ with $\rho\leq-\rho_*$ on $\overline\Omega$ for some fixed $\rho_*>0$.
Let $q\in C^2([0,T])$ satisfy $q^{(j)}(0)=q^{(j)}(T)$ for $j=0,1,2$, and extend $q$ periodically when needed. Assume
\begin{align}\label{eq:qclass}
1\leq q_c\leq q(t)\leq q_{\max},\qquad |q'|\leq\beta q^2,
\qquad |q''|\leq\beta^2q^3.
\end{align}
One may take $q_{\max}=\max_{t\in[0,T]}q(t)$. In \cref{eq:qclass}, it need only be an upper bound.
Set $\theta(t,x)=e^{q(t)\rho(x)}$.
The regularized weights in \cref{intro:parabolic-discrete-carleman} are $\xi=q/s$ and $\alpha=\xi\rho$, with $s=\max_{[0,T]}q$. Thus $\max_{[0,T]}\xi=1$, $s\xi=q$, and $e^{s\alpha}=\theta$.
Fix $\tau_*>0$. An allowed weight has the form $\sigma=q^\tau\theta$ or $\sigma=q^\tau\theta^{-1}$, where $\tau$ is a fixed real exponent with $|\tau|\leq\tau_*$.

Write $D^jv$ for the collection of all spatial derivatives of $v$ of order $j$, with the Euclidean norm on its components. At each fixed time $t$, define the spatial norms
\begin{align}\label{eq:graphnorms}
\mathcal N_{\sigma,\Omega}(v)^2
&=\sum_{j=0}^m\|q(t)^{m-j}\sigma(t,\cdot)D^jv\|_{L^2(\Omega)}^2,
\qquad v\in H^m(\Omega),\notag\\
\mathcal R_{\sigma,\Omega}(v)
&=\sum_{j=0}^{2m}\|q(t)^{2m-j}\sigma(t,\cdot)D^jv\|_{L^2(\Omega)},
\qquad v\in H^{2m}(\Omega).
\end{align}
For fixed $t$ and weights, these norms are equivalent to the usual $H^m(\Omega)$ and $H^{2m}(\Omega)$ norms, respectively.
For time-dependent functions, define the space-time norms by
\begin{align}
\label{eq:spacetimegraphnorms}
\mathcal N_{\sigma,\mathcal Q}(v)^2
&=\sum_{j=0}^m\|q^{m-j}\sigma D^jv\|_{L^2(\mathcal Q)}^2,
\qquad
\mathcal R_{\sigma,\mathcal Q}(v)
=\sum_{j=0}^{2m}\|q^{2m-j}\sigma D^jv\|_{L^2(\mathcal Q)}.
\end{align}
These are norms on $L^2(0,T;H^m(\Omega))$ and $L^2(0,T;H^{2m}(\Omega))$, respectively.
All these definitions also apply with $\sigma=1$.

We now collect the continuous analytic assumptions used throughout the paper. They are required to hold with uniform positive constants for every $T>0$ and every allowed weight. The symbols $c,C>0$ denote generic constants that may change from line to line.
\begin{enumerate}[label=\textbf{(A\arabic*)},ref=(A\arabic*)]
\item\label{ass:form} \emph{Form and graph regularity.}
The space $V$ is a closed subspace of $H^m(\Omega)$, dense in $H$.
The real symmetric form $a:V\times V\to\mathbb R$ satisfies the boundedness estimate
\begin{align*}
|a(v,w)|\leq C\|v\|_{H^m(\Omega)}\|w\|_{H^m(\Omega)}
\end{align*}
and the coercivity estimate
\begin{align*}
a(v,v)\geq c\|v\|_{H^m(\Omega)}^2.
\end{align*}
Its associated positive self-adjoint operator $A$ has $D(A)\hookrightarrow H^{2m}(\Omega)$ and satisfies
\begin{align}\label{eq:ellreg}
\|v\|_{H^{2m}(\Omega)}\leq C\|Av\|_{L^2(\Omega)},\quad v\in D(A).
\end{align}
In particular, $D(A^{1/2})=V$ and $\|A^{1/2}v\|_{L^2(\Omega)}^2=a(v,v)$.
\item\label{ass:localization} \emph{Weight compatibility and commutator bound.} Multiplication by every allowed weight $\sigma$ and its inverse $\sigma^{-1}$ preserves both $V$ and $D(A)$.
Write $[A,\sigma]u=A(\sigma u)-\sigma Au$. At each fixed time $t$, for every allowed weight $\sigma$ and every $v\in D(A)$,
\begin{align}\label{eq:commutator}
\|[A,\sigma]\sigma^{-1}v\|_{L^2(\Omega)}
\leq C\sum_{j=0}^{2m-1}q^{2m-j}\|D^jv\|_{L^2(\Omega)}.
\end{align}
\item\label{ass:weighted} \emph{Weighted form bounds.} At each fixed time, for every allowed weight $\sigma$ and all $v,w\in V$,
\begin{align}
|a(v,w)|&\leq C\mathcal N_{\sigma,\Omega}(v)\mathcal N_{\sigma^{-1},\Omega}(w),\label{eq:unshiftedformbound}
\end{align}
and
\begin{align}
a(v,\sigma^2v)&\geq c\mathcal N_{\sigma,\Omega}(v)^2-C\|q^m\sigma v\|_{L^2(\Omega)}^2.\label{eq:unshiftedcoercive}
\end{align}
\item\label{ass:carleman} \emph{Continuous Carleman estimate.} Write $P_\epsilon=\epsilon\partial_t+A$, $\epsilon\in\{-1,1\}$, and fix a nonempty open set $\omega_0\Subset\Omega$. Fix $c_0>0$, $c_j\geq0$ for $1\leq j\leq m$, and $o\geq0$. Let $\mathcal D_j$ be fixed spatial differential expressions of order at most $j$, with bounded coefficients.
For each $\epsilon\in\{-1,1\}$ and every $v\in H^1(0,T;H)\cap L^2(0,T;D(A))$ with $v(0)=v(T)$,
\begin{align}\label{eq:continuousabstract}
\mathcal E_q(v)&:=\|q^{c_0/2}\theta v\|_{L^2(\mathcal Q)}^2+
\sum_{j=1}^m\|q^{c_j/2}\theta\mathcal D_jv\|_{L^2(\mathcal Q)}^2
\leq C\bigl(\|\theta P_\epsilon v\|_{L^2(\mathcal Q)}^2+
\|q^{o/2}\theta v\|_{L^2(\mathcal Q_{\omega_0})}^2\bigr).
\end{align}
\end{enumerate}

Unless stated otherwise, all constants depend only on $d,m,\Omega,\omega_0,\rho,\beta,q_c,\tau_*$, the Carleman exponents $c_0,\ldots,c_m,o$, and the fixed bounds in the hypotheses used. Additional parameter dependence is stated explicitly.

\subsection{Discrete scheme and transfer operators}

For each discretization parameter $h$, we fix the following discrete data: a finite-dimensional real Hilbert space $X_h$ with inner product $(\cdot,\cdot)_{X_h}$ and induced norm $\|\cdot\|_{X_h}$, a linear spatial operator $A_h:X_h\to X_h$, a real normed load space $\mathfrak F_h$ with load map $\Gamma_h:\mathfrak F_h\to X_h$, and a Hilbert control space $U_h$ with linear control operator $B_h:U_h\to X_h$.
The adjoint $B_h^*:X_h\to U_h$ is taken with respect to the prescribed unweighted inner products on $X_h$ and $U_h$.

To compare this discrete scheme with the continuous problem, we introduce a linear conforming reconstruction $R_h:X_h\to V$ and define its adjoint $J_h:H\to X_h$ by
\begin{align}\label{eq:massinterface}
(J_hv,z)_{X_h}=(v,R_hz)_H,\qquad v\in H,\quad z\in X_h.
\end{align}
The two compositions
\begin{align}\label{eq:transfer-compositions}
Q_h=R_hJ_h:H\to V,\qquad G_h=J_hR_h:X_h\to X_h
\end{align}
will play a central role in the transfer of weighted estimates: $Q_h$ maps continuous data to the reconstructed discrete space, while $G_h$ encodes the induced discrete mass. We emphasize that neither $Q_h^2=Q_h$ nor $G_h=\mathrm{Id}_{X_h}$ is assumed; the reconstruction need not be orthogonal.
All scheme, reconstruction, load, and control maps are independent of time.

The load and control spaces $\mathfrak F_h$ and $U_h$ need not be finite dimensional. The weighted norms on $X_h$, $\mathfrak F_h$, and $U_h$ are part of the discretization data; their concrete definitions are given in \cref{sec:p1,sec:fd}. At each fixed time, for every $r>0$ and all $z\in X_h$, $f\in\mathfrak F_h$, $w\in U_h$, we require the homogeneity
\begin{align*}
\|z\|_{X_h,r\sigma}&=r\|z\|_{X_h,\sigma},&
\|f\|_{\mathfrak F_h,r\sigma}&=r\|f\|_{\mathfrak F_h,\sigma},&
\|w\|_{U_h,r\sigma}&=r\|w\|_{U_h,\sigma},
\end{align*}
so that the weighted norms scale consistently with the weight parameter.

\subsection{Numerical assumptions}

We write $a\asymp b$ if $cb\leq a\leq Cb$ for positive constants independent of the varying parameters.

We now state the numerical assumptions under which the transfer theorem will be proved. They are the discrete analogues of the continuous analytic assumptions and are required to hold uniformly over the prescribed family of discretizations. Specifically, we assume that there exist a fixed threshold $\delta_{\rm num}>0$ and uniform positive constants such that \cref{ass:interfaces,ass:loads,Papprox,Ptest,ass:observation} hold for every discretization in the prescribed family, every $q$ satisfying \cref{eq:qclass} with $hq_{\max}\leq\delta_{\rm num}$, every associated allowed weight, and every $t\in[0,T]$.

\begin{enumerate}[label=\textbf{(N\arabic*)},ref=(N\arabic*)]
\item\label{ass:interfaces} \emph{Reconstruction and stiffness compatibility.} The reconstruction $R_h$ is injective, and $A_h$ is self-adjoint and nonnegative in the inner product. For all $z,w\in X_h$,
\begin{align}\label{eq:reconstruction-norms}
\|R_hz\|_{L^2(\Omega)}&\asymp\|z\|_{X_h},\qquad
\|\sigma R_hz\|_{L^2(\Omega)}\asymp\|z\|_{X_h,\sigma}.
\end{align}
The discrete stiffness satisfies
\begin{align}\label{eq:stiffnessinterface}
(A_hz,w)_{X_h}=a(R_hz,R_hw)+\mathfrak b_h(z,w),
\end{align}
where $\mathfrak b_h$ is a time-independent symmetric form satisfying, for some $\eta_A(h)\geq0$ and every allowed weight,
\begin{align}\label{eq:stiffnessdefect}
|\mathfrak b_h(z,w)|\leq\eta_A(h)
\mathcal N_{\sigma,\Omega}(R_hz)\mathcal N_{\sigma^{-1},\Omega}(R_hw).
\end{align}
Exact stiffness compatibility is the case $\eta_A=0$.
\item\label{ass:loads} \emph{Stable load maps.} There is a lift $\mathcal L_h:\mathfrak F_h\to H$ with $J_h\mathcal L_h=\Gamma_h$ satisfying
\begin{align}\label{eq:loadinterface}
\|\sigma\mathcal L_hf\|_{L^2(\Omega)}\leq C\|f\|_{\mathfrak F_h,\sigma}.
\end{align}
The load map has a right inverse $\mathcal I_h:X_h\to\mathfrak F_h$, with $\Gamma_h\mathcal I_h=\mathrm{Id}_{X_h}$, satisfying
\begin{align}\label{eq:stateload}
\|\mathcal I_hz\|_{\mathfrak F_h,\sigma}\leq C\|z\|_{X_h,\sigma}.
\end{align}
\item\label{Papprox} \emph{Weighted approximation.} For prescribed $a_E(h),a_0(h)\geq0$, every $v\in D(A)$ satisfies
\begin{align}\label{eq:approximation}
\mathcal N_{\sigma,\Omega}(v-Q_hv)\leq a_E(h)\mathcal R_{\sigma,\Omega}(v).
\end{align}
The weighted $L^2$ approximation satisfies
\begin{align}\label{eq:approximation-l2}
\|\sigma(v-Q_hv)\|_{L^2(\Omega)}\leq a_0(h)\mathcal R_{\sigma,\Omega}(v).
\end{align}
\item\label{Ptest} \emph{Weighted test and superapproximation.} There is a differentiable operator family $\mathscr{T}_{\sigma,h}(t):X_h\to X_h$, self-adjoint and positive definite in the inner product, with $\mathscr{T}_{\sigma,h}(0)=\mathscr{T}_{\sigma,h}(T)$, such that
\begin{align}
(z,\mathscr{T}_{\sigma,h}z)_{X_h}&\asymp\|\sigma R_hz\|_{L^2(\Omega)}^2.\label{eq:test-norm}
\end{align}
Its temporal derivative satisfies
\begin{align}
|(z,\mathscr{T}_{\sigma,h}'z)_{X_h}|&\leq C\|q\sigma R_hz\|_{L^2(\Omega)}^2,\label{eq:testtime}
\end{align}
and the spatial superapproximation estimate is
\begin{align}
\mathcal N_{\sigma^{-1},\Omega}(R_h\mathscr{T}_{\sigma,h}z-\sigma^2R_hz)
&\leq Chq_{\max}\mathcal N_{\sigma,\Omega}(R_hz).\label{eq:testsuper}
\end{align}
\item\label{ass:observation} \emph{Observation compatibility.} For every $z\in X_h$,
\begin{align}\label{eq:observationinterface}
\|\sigma R_hz\|_{L^2(\omega_0)}\leq C\|B_h^*z\|_{U_h,\sigma}.
\end{align}
\end{enumerate}

\Cref{ass:form,ass:localization,ass:weighted} provide the regularity and weighted stability needed for the shifted continuous auxiliary problem, while \cref{ass:carleman} supplies the continuous Carleman estimate. On the numerical side, \cref{ass:interfaces,ass:loads} ensure that reconstructed states and lifted loads are compatible with the discrete scheme, whereas \cref{Papprox,Ptest} provide the weighted approximation and test estimates required for the spatial comparison. Finally, \cref{ass:observation} controls the continuous observation of the reconstruction by the discrete observation. These ingredients are combined in the temporal and spatial comparison estimates of \cref{sec:time,sec:error}, and the transfer is completed in \cref{sec:transfer}.

\subsection{Fully discrete transfer theorem}

We now discretize the time interval and introduce the discrete norms and difference operators needed to state the transfer theorem. Fix an integer $N\geq2$ and discretize $[0,T]$ with uniform time step ${\delta t}=T/N$ and nodes $t_n=n{\delta t}$, $0\leq n\leq N$. Put $q_n=q(t_n)$, $\theta_n=\theta(t_n,\cdot)$, and $\sigma_n=\sigma(t_n,\cdot)$. Superscripts on state and load sequences denote time levels; the subscript $h$ denotes spatial discretization. Products of weights and sequences are evaluated nodewise.

For a normed space $\mathcal H$ and a specified finite set $\mathcal J$ of time indices, let $\ell_{\delta t}^2(\mathcal H)=\mathcal H^{\mathcal J}$ with norm
\begin{align*}
\|v\|_{\ell_{\delta t}^2(\mathcal H)}^2
:={\delta t}\sum_{n\in\mathcal J}\|v^n\|_{\mathcal H}^2.
\end{align*}
If $\mathcal H$ is a Hilbert space, the corresponding inner product is
\begin{align*}
(v,w)_{\ell_{\delta t}^2(\mathcal H)}
:={\delta t}\sum_{n\in\mathcal J}(v^n,w^n)_{\mathcal H}.
\end{align*}
For prescribed weighted norms, set
\begin{align*}
\|v\|_{\ell_{\delta t}^2(\mathcal H),\sigma}^2
:={\delta t}\sum_{n\in\mathcal J}\|v^n\|_{\mathcal H,\sigma_n}^2.
\end{align*}

To match the temporal endpoint condition in \cref{ass:carleman}, take $\mathcal J=\{0,\ldots,N-1\}$ and extend sequences by $v^{n+N}=v^n$ for $n\in\mathbb Z$, with indices interpreted modulo $N$. Define the oriented difference quotients
\begin{align*}
D_+v^n:=\frac{v^n-v^{n-1}}{{\delta t}},\qquad
D_-v^n:=\frac{v^n-v^{n+1}}{{\delta t}}.
\end{align*}
The notation $D_\pm$ denotes either choice, corresponding to $\pm\partial_t$ in the continuous equations. Periodic summation by parts gives $D_+^*=D_-$ with respect to the $\ell_{\delta t}^2(\mathcal H)$ inner product.

Next, we introduce discrete counterparts of the continuous graph norms. For $v\in\ell_{\delta t}^2(H^m(\Omega))$ and $v\in\ell_{\delta t}^2(H^{2m}(\Omega))$, respectively, define
\begin{align*}
\mathcal N_{\sigma,{\delta t}}(v)^2
&=\sum_{j=0}^m\|q^{m-j}\sigma D^jv\|_{\ell_{\delta t}^2(L^2(\Omega))}^2,\qquad
\mathcal R_{\sigma,{\delta t}}(v)
=\sum_{j=0}^{2m}\|q^{2m-j}\sigma D^jv\|_{\ell_{\delta t}^2(L^2(\Omega))}.
\end{align*}
These definitions also apply with $\sigma=1$.

For $z\in\ell_{\delta t}^2(X_h)$, we define the discrete Carleman energy
\begin{align}\label{eq:discreteenergy}
\mathcal{E}_{q,h,{\delta t}}(z)&:=\|q^{c_0/2}z\|_{\ell_{\delta t}^2(X_h),\theta}^2+
\sum_{j=1}^m\|q^{c_j/2}\theta\mathcal D_jR_hz\|_{\ell_{\delta t}^2(L^2(\Omega))}^2.
\end{align}
Using the Carleman exponents in \cref{ass:carleman}, set
\begin{align}
d_*&=\max\{0,\max_{1\leq j\leq m}(c_j-2(m-j))\},\qquad
b_*=\max\{c_0,o,4m\}.\label{eq:transfer-exponents}
\end{align}
These exponents determine the weight powers that enter the smallness conditions below.

We are now ready to state the main transfer theorem.

\begin{theorem}\label{thm:main}
Assume
\cref{ass:form,ass:localization,ass:weighted,ass:carleman} and \cref{ass:interfaces,ass:loads,Papprox,Ptest,ass:observation},
and
\begin{equation}\label{eq:weightcoverage}
\tau_*\ge\max\{d_*/2,b_*/2\}.
\end{equation}
Then there exist positive constants
$\delta_x,\delta_{\mathrm{time}},\eta_*,c_*,C$, with
$\delta_x\le\delta_{\rm num}$, such that the following holds.

Put
\begin{equation}
\varepsilon_E=a_E(h)+\eta_A(h)+\sqrt{\delta t},
\qquad
\varepsilon_0=a_0(h)+a_E(h)^2+\eta_A(h)+\delta t.
\label{eq:error-scales}
\end{equation}
Assume that
\[
a_E\le1,\qquad
\eta_A\le\eta_*,\qquad
\delta t\le1,\qquad
hq_{\max}\le\delta_x,\qquad
\delta t q_{\max}^2\le\delta_{\mathrm{time}},
\]
and that the defect quantities satisfy
\begin{align}
\Lambda_f
&:=\varepsilon_E^2q_{\max}^{d_*}
+\varepsilon_0^2q_{\max}^{b_*}\le1,
\label{eq:transfer-defects}\\
\Lambda_z
&:=\varepsilon_E^2\max\{1,q_{\max}^{4m+d_*-c_0}\}
+\varepsilon_0^2\max\{1,q_{\max}^{4m+b_*-c_0}\}\le c_*.
\label{eq:transfer-state-defect}
\end{align}
Then, for either choice of sign, every pair
$(z,f)\in\ell_{\delta t}^2(X_h)\times\ell_{\delta t}^2(\mathfrak F_h)$
satisfying
\begin{equation}
D_\pm z^n+A_hz^n=\Gamma_hf^n,
\qquad n=0,\ldots,N-1,
\label{eq:state-load-equation}
\end{equation}
obeys
\begin{equation}\label{eq:finalCE}
\mathcal{E}_{q,h,\delta t}(z)
\le C\bigl(
\|f\|_{\ell_{\delta t}^2(\mathfrak F_h),\theta}^2
+\|q^{o/2}B_h^*z\|_{\ell_{\delta t}^2(U_h),\theta}^2
\bigr).
\end{equation}
In particular, every $z\in\ell_{\delta t}^2(X_h)$ satisfies
\begin{equation}\label{eq:rawCE}
\mathcal{E}_{q,h,\delta t}(z)
\le C\bigl(
\|D_\pm z+A_hz\|_{\ell_{\delta t}^2(X_h),\theta}^2
+\|q^{o/2}B_h^*z\|_{\ell_{\delta t}^2(U_h),\theta}^2
\bigr).
\end{equation}
\end{theorem}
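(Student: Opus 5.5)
The plan mirrors the elliptic model of \Cref{sec:elliptic-transfer}. Given a pair $(z,f)$ satisfying \cref{eq:state-load-equation}, I will build a continuous auxiliary function $U$ on $\mathcal Q$ that solves a shifted problem whose fully discrete Galerkin solution is exactly $z$. Concretely, take the lifted load $F^n=\mathcal L_hf^n$ and the lifted state $Z^n=\mathcal L_h\mathcal I_hz^n$; by \cref{ass:loads}, $J_hF^n=\Gamma_hf^n$ and $J_hZ^n=z^n$. Let $F$ and $Z$ be their piecewise constant-in-time interpolants. Then solve the continuous periodic problem
\begin{align*}
\pm\partial_tU+AU+Kq_{\max}^{2m}U=F+Kq_{\max}^{2m}Z\quad\text{in }\mathcal Q,
\qquad U(0)=U(T).
\end{align*}
This problem is uniquely solvable because of the shift and \cref{ass:form}. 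Here $K$ is fixed large so that \cref{eq:unshiftedcoercive} yields weighted coercivity. The stiffness identity \cref{eq:stiffnessinterface}, together with \cref{eq:massinterface}, shows that the backward Euler/Galerkin discretization of this problem, with the same shift, is solved by $z$, up to the defect $\mathfrak b_h$. The cancellation analogous to \cref{intro:poisson-residual} is
\[
P_{\pm}U=F-Kq_{\max}^{2m}(U-Z).
\]
Applying \cref{ass:carleman} to $U$ therefore produces $\|\theta F\|^2\lesssim\|f\|^2_{\ell^2_{\delta t}(\mathfrak F_h),\theta}$, plus error terms of the form $q_{\max}^{4m}\|\theta(U-Z)\|^2$.

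The core step is a weighted space-time error estimate, uniform over all discrete data, for $e=U-R_hz$ and for its time-sampled version. First I will prove weighted continuous regularity: $\mathcal R_{\theta,\mathcal Q}(U)$, and the same with $q^\tau$ factors, is bounded by $\|\theta(F+Kq_{\max}^{2m}Z)\|$. This conjugates by $\sigma$, uses the commutator bound \cref{eq:commutator}, uses \cref{eq:ellreg}, and absorbs the $q'$ terms through \cref{eq:qclass}. Second comes the temporal comparison of \Cref{sec:time}: replacing $U$ by its nodal or averaged values, and $\partial_t$ by $D_\pm$, costs $\sqrt{\delta t}$ in energy and $\delta t$ in $L^2$, valid under $\delta t\,q_{\max}^2\le\delta_{\rm time}$. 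Third comes the spatial comparison of \Cref{sec:error}. I will test the error equation with $\mathscr T_{\sigma,h}$ from \cref{Ptest}, use \cref{eq:testtime} for the time derivative of the weight operator, and use superapproximation \cref{eq:testsuper} with $hq_{\max}\le\delta_x$ so that the test commutator is absorbed. Quasi-optimality then follows via \cref{eq:approximation}, and an Aubin--Nitsche style duality with the shifted adjoint problem (the $\mp$ sign, periodic) gives the $L^2$ bound with $a_0+a_E^2$. Altogether I expect
\[
\mathcal N_{\sigma,\delta t}(e)\lesssim\varepsilon_E\,\mathcal R,\qquad
\|\sigma e\|\lesssim\varepsilon_0\,\mathcal R,
\]
where $\eta_A$ enters through \cref{eq:stiffnessdefect} and requires $\eta_A\le\eta_*$.

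Finally comes the transfer and absorption. Write $\mathcal E_{q,h,\delta t}(z)$ as $\mathcal E_q(U)$ plus differences. The $X_h$ term is compared using \cref{eq:reconstruction-norms}, and the $\mathcal D_j$ terms are compared using $\mathcal N$ bounds on $e$ with weights $q^{c_j/2}$; this is where $d_*$ and the coverage condition \cref{eq:weightcoverage} enter. The observation term becomes $\|q^{o/2}\theta R_hz\|_{\omega_0}$ plus error, and \cref{eq:observationinterface} bounds it by the discrete observation. The residual contributes $q_{\max}^{4m}\|\theta(U-Z)\|^2$, with $U-Z=e+(R_hz-Z)$. The second piece is handled by stability of $\mathcal L_h\mathcal I_h$ and comparison to $R_hz$ through $Q_h$; the first piece comes from the error bound. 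All defects are bounded by
\[
\bigl(\Lambda_f\|f\|^2_{\theta}+\Lambda_z\,q_{\max}^{c_0}\|z\|^2_{\theta}\bigr)\cdot C.
\]
Since $q^{c_0}\ge$ the relevant power, the $\Lambda_z$ term is absorbed into the left side once $c_*$ is small, while $\Lambda_f\le1$ keeps the load coefficient bounded, giving \cref{eq:finalCE}. Then \cref{eq:rawCE} follows by taking $f=\mathcal I_h(D_\pm z+A_hz)$ and using \cref{eq:stateload}.

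The main obstacle is the uniform weighted fully discrete error estimate. In particular, the duality ($L^2$) step must be carried out with exponential weights and time-dependent $q$ while keeping the constants independent of the data over the whole discrete space. I would first attempt it through the weighted Gårding inequality \cref{eq:unshiftedcoercive}, with the large shift $Kq_{\max}^{2m}$ dominating every lower-order weight commutator.
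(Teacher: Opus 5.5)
Your overall architecture (shifted periodic auxiliary problem, weighted graph and Euler regularity, temporal duality, C\'ea-type testing with $\mathscr T_{\sigma,h}$, Aubin--Nitsche, absorption) matches the paper's. However, the auxiliary problem you build lacks the exactness on which the transfer rests, and the absorption cannot be closed under the stated hypotheses.

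The first gap is the load. You take $Z=\mathcal L_h\mathcal I_hz$, so that $J_hZ=z$. But the discrete counterpart of the shift term $\mu(U,R_hv)_H$ is $\mu(G_hu_h,v)_{X_h}$, and $G_h=J_hR_h\neq\mathrm{Id}$ in general. Hence $z$ solves the discretized auxiliary problem only up to $\mu(\mathrm{Id}-G_h)z$. This is a mass defect, not $\mathfrak b_h$ (which sits inside $A_h$ and plays no role at this stage), and no hypothesis controls it. Seen from the continuous side, the piece $\mu(R_hz-Z)$ of your residual $F-\mu(U-Z)$ is not small. \cref{ass:loads} only gives $\|\theta(R_hz-Z)\|_{L^2(\Omega)}\le C\|z\|_{X_h,\theta}$, so the right-hand side acquires $Cq_{\max}^{4m}\|z\|^2_{\ell^2_{\delta t}(X_h),\theta}$ with an $O(1)$ constant, which cannot be absorbed. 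The identity $Q_hZ=R_hz$ does not help. \eqref{eq:approximation-l2} requires $Z\in D(A)$, which fails for the Cartesian scheme since $\mathcal L_h$ there is a discontinuous dual-basis lift. Moreover, a bound through $\mathcal R_{\sigma,\Omega}(Z)$ gains nothing on mesh-scale oscillations. For consistent $P_1$ elements one has $Z=R_hz$ and $G_h=\mathrm{Id}$, which hides the issue.

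The missing idea is to lift the state by the reconstruction itself: $g^n=\mathcal L_hf^n+\mu_nR_hz^n$. Then $J_hg^n=\Gamma_hf^n+\mu_nG_hz^n=D_\pm z^n+A_hz^n+\mu_nG_hz^n$. By uniqueness, $z$ is \emph{exactly} the solution of \eqref{eq:auxEuler}. The residual becomes $\overline{\mathcal L_hf}-\bar\mu(U-R_h\bar z)$, where $U-R_h\bar z$ is precisely the error controlled by \cref{thm:combined}.

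The second gap is the constant shift $Kq_{\max}^{2m}$ carried over from the elliptic model. With it, the residual contributes $\mu^2\|\theta e\|^2\le C\varepsilon_0^2\mu^2\|\theta g\|^2$, which contains $C\varepsilon_0^2q_{\max}^{8m}\|\theta R_hz\|^2_{\ell^2_{\delta t}(L^2(\Omega))}$. By contrast, $\mathcal E_{q,h,\delta t}(z)$ controls only $\|q^{c_0/2}z\|^2_{\ell^2_{\delta t}(X_h),\theta}$ with the \emph{local} value $q(t)$. That value can stay bounded independently of $h$ on most of $[0,T]$, as for \eqref{eq:physical-weight}. Since $z$ is arbitrary, your argument would need $\varepsilon_0^2q_{\max}^{8m}$ small, which $\Lambda_z\le c_*$ does not imply. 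For the heat equation with $a_0\sim h^2$, this gives $q_{\max}\le ch^{-1/2}$ rather than $ch^{-4/5}$. Your bookkeeping term $\Lambda_zq_{\max}^{c_0}\|z\|^2_\theta$ shows the same mismatch, since it is not dominated by $\mathcal E_{q,h,\delta t}(z)$.

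The paper instead uses $\mu_n=(Lq_n)^{2m}$, frozen on time cells so that the residual identity is exact. It then applies the $L^2$ error bound with the allowed weight $q^{2m}\theta$, which is where $\tau_*\ge b_*/2$ enters. All powers of $q$ remain local in time, and $\|q^{2m+\nu/2}\theta R_hz\|^2_{\ell^2_{\delta t}(L^2(\Omega))}\le C\max\{1,q_{\max}^{4m+\nu-c_0}\}\mathcal E_{q,h,\delta t}(z)$ yields exactly the quantity $\Lambda_z$.
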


The condition $\Lambda_f\le1$ is a normalization. Any fixed upper bound changes only the constant in \cref{eq:finalCE}. Only the state defect $\Lambda_z$ must be small.

\begin{remark}
The finite upper bound in \cref{eq:qclass} permits $hq_{\max}\ll1$ and ${\delta t}q_{\max}^2\ll1$. The admissible bound grows under refinement. Thus we use a regularized temporal factor rather than an endpoint-singular one. \Cref{sec:continuous} adapts the existing continuous weighted identities to functions with matching time endpoints, so that temporal boundary terms cancel. In \cref{sec:control}, the regularization is chosen at the admissible mesh scale and a cutoff gives physical-time observability; temporal endpoint terms arise in that application.
\end{remark}

For spatial approximation orders $h^m$ and $h^{2m}$, the next corollary gives an explicit sufficient bound on $q_{\max}$ in terms of $h,{\delta t}$ that ensures the smallness conditions in \cref{thm:main}.

\begin{corollary}\label{cor:general-scales}
Under the analytic and numerical assumptions of \cref{thm:main}, suppose $a_E(h)\leq Ch^m$ and $a_0(h)+\eta_A(h)\leq Ch^{2m}$ uniformly in $h$. Set
\begin{align}\label{eq:budgetexponents}
\nu_E=\max\{d_*,4m+d_*-c_0,0\},\qquad
\nu_0=\max\{b_*,4m+b_*-c_0,0\}.
\end{align}
Then there exist $\kappa,h_0,\Delta_0,C>0$, with $\Delta_0$ an upper bound on the time step, such that \cref{eq:finalCE,eq:rawCE} hold for $h\leq h_0$, ${\delta t}\leq \Delta_0$, and
\begin{align}\label{eq:scalerule}
q_{\max}\leq\kappa\min\{h^{-\alpha_x},{(\delta t)}^{-\alpha_t}\},
\end{align}
where
$\alpha_x=\min\{1,2m/\nu_E,4m/\nu_0\}$,
$\alpha_t=\min\{1/2,1/\nu_E,2/\nu_0\}$,
and ratios with zero denominator are omitted from the minima.
\end{corollary}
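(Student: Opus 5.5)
The corollary should follow from \cref{thm:main} alone, by checking its smallness hypotheses for the stated range of $q_{\max}$. I would fix $\kappa\le1$, $h_0\le1$ and $\Delta_0\le1$ small, and then verify in turn that
\[
a_E\le1,\quad \eta_A\le\eta_*,\quad \delta t\le1,\quad hq_{\max}\le\delta_x,\quad \delta t\,q_{\max}^2\le\delta_{\rm time},\quad \Lambda_f\le1,\quad \Lambda_z\le c_*
\]
whenever $h\le h_0$, $\delta t\le\Delta_0$ and \cref{eq:scalerule} holds.

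\emph{The size conditions.} These are immediate from the assumed bounds:
\begin{itemize}
\item $a_E\le Ch_0^m\le1$ and $\eta_A\le Ch_0^{2m}\le\eta_*$ once $h_0$ is small.
\item Since $\alpha_x\le1$ and $h\le1$, we have $hq_{\max}\le\kappa h^{1-\alpha_x}\le\kappa\le\delta_x$.
\item Since $\alpha_t\le1/2$ and $\delta t\le1$, we have $\delta t\,q_{\max}^2\le\kappa^2(\delta t)^{1-2\alpha_t}\le\kappa^2\le\delta_{\rm time}$.
\end{itemize}

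\emph{Bounding the error scales.} With $h,\delta t\le1$,
\[
\varepsilon_E\le C(h^m+\sqrt{\delta t}),\qquad
\varepsilon_0\le C(h^{2m}+h^{2m}+\delta t)\le C(h^{2m}+\delta t).
\]
Here the $a_E^2$ term is $O(h^{2m})$, so it is no worse than $a_0$. Next, $q_{\max}\ge q_c\ge1$ gives $\max\{1,q_{\max}^{x}\}\le q_{\max}^{\max\{x,0\}}$. Together with $d_*\ge0$ and $b_*\ge0$, this yields
\[
\Lambda_f+\Lambda_z\le 2\bigl(\varepsilon_E^2q_{\max}^{\nu_E}+\varepsilon_0^2q_{\max}^{\nu_0}\bigr).
\]
It therefore suffices to make
\[
(h^{2m}+\delta t)\,q_{\max}^{\nu_E}+(h^{4m}+\delta t^2)\,q_{\max}^{\nu_0}
\]
smaller than a prescribed multiple of $\min\{1,c_*\}$.

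\emph{Bounding the four terms.} Each term is controlled by the matching half of the minimum in \cref{eq:scalerule}. If $\nu_E>0$, then
\[
h^{2m}q_{\max}^{\nu_E}\le\kappa^{\nu_E}h^{2m-\alpha_x\nu_E}\le\kappa^{\nu_E},
\]
because $\alpha_x\le 2m/\nu_E$. Similarly, $\delta t\,q_{\max}^{\nu_E}\le\kappa^{\nu_E}$ because $\alpha_t\le1/\nu_E$. If $\nu_0>0$, then $\alpha_x\le4m/\nu_0$ gives $h^{4m}q_{\max}^{\nu_0}\le\kappa^{\nu_0}$, and $\alpha_t\le 2/\nu_0$ gives $\delta t^2q_{\max}^{\nu_0}\le\kappa^{\nu_0}$. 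In each case the positivity of the exponent of $h$ or $\delta t$ is used together with $h,\delta t\le1$.

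When an exponent vanishes, the corresponding ratio is omitted from the minima, and the terms become $h^{2m}+\delta t$ or $h^{4m}+\delta t^2$. These are made small by shrinking $h_0$ and $\Delta_0$ instead of $\kappa$. Note that $\nu_0\ge b_*\ge4m>0$, so only $\nu_E=0$ can occur.

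\emph{Conclusion.} I would choose $\kappa$ first, then $h_0$ and $\Delta_0$, so that the total is at most $\min\{1,c_*\}$. \Cref{thm:main} then yields \cref{eq:finalCE,eq:rawCE} with its constant $C$, which depends only on the constants of \cref{thm:main}.

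The only delicate point is bookkeeping rather than analysis. Replacing $\max\{1,\cdot\}$ by a single power requires $q_{\max}\ge1$, and the degenerate case $\nu_E=0$ has to be handled through $h_0$ and $\Delta_0$ rather than through $\kappa$.
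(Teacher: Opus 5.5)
Your proposal is correct and follows the paper's proof essentially step for step. You bound $\varepsilon_E^2\le C(h^{2m}+\delta t)$ and $\varepsilon_0^2\le C(h^{4m}+\delta t^2)$, use $q_{\max}\ge1$ to dominate $\Lambda_f+\Lambda_z$ by the $q_{\max}^{\nu_E}$ and $q_{\max}^{\nu_0}$ terms, and fix $\kappa$ for the positive-exponent terms together with $\kappa\le\delta_x$ and $\kappa^2\le\delta_{\mathrm{time}}$; you then shrink $h_0,\Delta_0\le1$ for the zero-exponent terms and for $a_E\le1$, $\eta_A\le\eta_*$. Your extra remark that $\nu_0\ge b_*\ge4m>0$, so that only $\nu_E$ can vanish, is a harmless sharpening of the paper's case distinction.
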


Applying this general scale rule to the Carleman estimate for the heat equation gives the following parameter range.

\begin{corollary}\label{cor:scales}
Under the same analytic, numerical, and approximation assumptions as \cref{cor:general-scales}, take $m=1$ and $(c_0,c_1;o)=(3,1;3)$.
Then there exist $\kappa,h_0,\Delta_0,C>0$ such that \cref{eq:finalCE,eq:rawCE} hold whenever $h\leq h_0$, ${\delta t}\leq \Delta_0$, and $q_{\max}\leq S_{h,{\delta t}}$, where
\begin{align}\label{eq:mainS}
S_{h,{\delta t}}=\kappa\min\{h^{-4/5},{(\delta t)}^{-2/5}\}.
\end{align}
\end{corollary}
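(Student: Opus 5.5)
The plan is to apply \cref{cor:general-scales} with $m=1$ and $(c_0,c_1;o)=(3,1;3)$ and simply evaluate the exponents. Since the analytic and numerical hypotheses, including $a_E(h)\le Ch$ and $a_0(h)+\eta_A(h)\le Ch^2$, are assumed exactly as in \cref{cor:general-scales}, the only things left are the coverage condition \cref{eq:weightcoverage} (part of the hypotheses of \cref{thm:main}) and the arithmetic of \cref{eq:transfer-exponents,eq:budgetexponents}.

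The first step is the exponents in \cref{eq:transfer-exponents}. Here $d_*=\max\{0,c_1-2(m-1)\}=\max\{0,1\}=1$ and $b_*=\max\{c_0,o,4m\}=\max\{3,3,4\}=4$. The coverage condition therefore reads $\tau_*\ge 2$, which we take as part of the standing assumptions on allowed weights.

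The second step is the budget exponents in \cref{eq:budgetexponents}. We get $\nu_E=\max\{1,4+1-3,0\}=2$ and $\nu_0=\max\{4,4+4-3,0\}=5$. The spatial rate is then
\begin{align*}
\alpha_x=\min\{1,2m/\nu_E,4m/\nu_0\}=\min\{1,1,4/5\}=4/5,
\end{align*}
and the temporal rate is
\begin{align*}
\alpha_t=\min\{1/2,1/\nu_E,2/\nu_0\}=\min\{1/2,1/2,2/5\}=2/5.
\end{align*}
Substituting into \cref{eq:scalerule} gives $q_{\max}\le\kappa\min\{h^{-4/5},(\delta t)^{-2/5}\}=S_{h,\delta t}$, with the constants $\kappa,h_0,\Delta_0,C$ furnished by \cref{cor:general-scales}. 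This yields \cref{eq:finalCE,eq:rawCE}.

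There is no real obstacle in this argument. The one point worth checking is that the binding constraint is $\nu_0=5$, which comes from the state defect $\varepsilon_0^2q_{\max}^{4m+b_*-c_0}$. This is consistent with the heuristic $h^4s^5$ given in \cref{sec:elliptic-transfer}: it gives $h^4q^5\lesssim 1$ in space and $(\delta t)^2q^5\lesssim1$ in time. The auxiliary requirements $hq_{\max}\le\delta_x$ and $\delta t\,q_{\max}^2\le\delta_{\rm time}$ hold automatically, because $h\cdot h^{-4/5}=h^{1/5}\to0$ and $\delta t\cdot(\delta t)^{-4/5}=(\delta t)^{1/5}\to0$. Both are already built into \cref{cor:general-scales} through the entries $1$ and $1/2$ in the minima.
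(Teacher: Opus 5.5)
Your proposal is correct and follows the paper's own proof. Both specialize \cref{cor:general-scales} by computing $(d_*,b_*,\nu_E,\nu_0)=(1,4,2,5)$, hence $(\alpha_x,\alpha_t)=(4/5,2/5)$. The paper also writes out the resulting bounds on $\Lambda_f$ and $\Lambda_z$; your remark that the $q_{\max}^5$ state-defect term is the binding one corresponds to the dominant term in the paper's bound on $\Lambda_z$.
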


\Cref{thm:main,cor:general-scales,cor:scales} are proved in \Cref{sec:transfer}. The two realizations in \Cref{sec:realizations} use this Carleman estimate for the heat equation and allow $\tau_*=4$. For other orders and continuous Carleman estimates, the range is determined by \cref{eq:scalerule} under the analytic and numerical assumptions, with the weight powers covered by \cref{eq:weightcoverage}.

\section{Weighted temporal estimates for the backward Euler scheme}\label{sec:time}

Throughout this section, we assume \cref{ass:form,ass:localization,ass:weighted}. We adapt the standard energy and parabolic duality argument for temporal discretization to the Carleman weights $\sigma=q^\tau\theta^{\pm1}$. The auxiliary graph estimates are required to hold uniformly with respect to these weights and their reciprocals. The adjoint problem then yields a weighted $L^2(\mathcal Q)$ error of order $\delta t$, whereas spatial interpolation yields a weighted energy error of order $\sqrt{\delta t}$. The time grid and oriented differences are those introduced in \Cref{sec:interfaces}.

\subsection{Regularity of the continuous auxiliary problem}

We begin by introducing the shifted auxiliary problem that will be used throughout the temporal and spatial comparisons. For $L\geq1$, define the positive scalar function $\mu(t):=(Lq(t))^{2m}$. At each fixed time, the associated bilinear form on $V$ is
\begin{align}
a_\mu(v,w):=a(v,w)+\mu(t)\,(v,w)_H,\qquad v,w\in V.\label{eq:shiftedform}
\end{align}
The same notation is used for the nodal and cellwise constant values of the shift.

The following lemma provides the weighted regularity and coercivity for this shifted problem. These estimates are uniform in the weight and will later allow us to absorb the temporal and spatial comparison errors.
\begin{lemma}\label{lem:graph}
There exists $C>0$ such that, for every prescribed margin $B>0$, one can choose $L_0(B)\geq1$ so that, for every $L\geq L_0(B)$ and $g\in L^2(0,T;H)$, the problem $(\epsilon\partial_t+A+\mu)U=g$, $U(0)=U(T)$, has a unique solution in $H^1(0,T;H)\cap L^2(0,T;D(A))$ satisfying
\begin{align}\label{eq:primitivegraph}
\|\sigma U_t\|_{L^2(\mathcal Q)}+\mathcal R_{\sigma,\mathcal Q}(U)\leq C\|\sigma g\|_{L^2(\mathcal Q)}.
\end{align}
There is also $C(L)>0$ such that, for all spatial slices $v,w\in V$, the shifted form satisfies
\begin{align}
|a_\mu(v,w)|&\leq C(L)\mathcal N_{\sigma,\Omega}(v)\mathcal N_{\sigma^{-1},\Omega}(w).\label{eq:formbound}
\end{align}
The prescribed coercivity margin is retained in
\begin{align}
a_\mu(v,\sigma^2v)&\geq c\mathcal N_{\sigma,\Omega}(v)^2+B\|q\sigma v\|_{L^2(\Omega)}^2.\label{eq:formcoercive}
\end{align}
\end{lemma}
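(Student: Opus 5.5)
I would prove the two form estimates first, since they are purely spatial, and then use them for the time-periodic problem. For \cref{eq:formbound}, write $a_\mu(v,w)=a(v,w)+\mu(\sigma v,\sigma^{-1}w)_H$. The first term is bounded by \cref{eq:unshiftedformbound}. The second is at most $L^{2m}\|q^m\sigma v\|\,\|q^m\sigma^{-1}w\|$, and both factors are the $j=0$ components of $\mathcal N_{\sigma,\Omega}(v)$ and $\mathcal N_{\sigma^{-1},\Omega}(w)$; this gives $C(L)=C+L^{2m}$. For \cref{eq:formcoercive}, combine \cref{eq:unshiftedcoercive} with $\mu(\sigma v,\sigma v)=L^{2m}\|q^m\sigma v\|^2$. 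This yields
\[
a_\mu(v,\sigma^2v)\geq c\,\mathcal N_{\sigma,\Omega}(v)^2+(L^{2m}-C)\|q^m\sigma v\|^2 .
\]
Since $q\geq q_c\geq1$, we have $\|q\sigma v\|\leq\|q^m\sigma v\|$. Choosing $L_0(B)^{2m}\geq C+B$ therefore gives \cref{eq:formcoercive}, with $c$ independent of $B$.

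For existence and uniqueness, the shift is positive and $A$ is coercive, so $A+\mu(t)$ is uniformly positive self-adjoint with the time-independent domain $D(A)$. The periodic problem is then uniquely solvable in $H^1(0,T;H)\cap L^2(0,T;D(A))$ by standard maximal $L^2$-regularity for the periodic Cauchy problem; a Galerkin/Fourier-in-time argument or the Lions theorem with periodic conditions would serve. Uniqueness follows from the unweighted energy identity, since the periodic boundary terms cancel. I would allow a qualitative constant here that may depend on $L$, and obtain the uniform constants below.

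The core is the weighted estimate \cref{eq:primitivegraph}. Set $W=\sigma U$. Then
\[
\epsilon W_t+(A+\mu)W=\sigma g+\epsilon\sigma_tU+[A,\sigma]\sigma^{-1}W ,
\]
which is legitimate because weights preserve $D(A)$ by \cref{ass:localization}. Since $\sigma_t=(\tau q'/q+q'\rho^{\pm})\sigma$ and $|q'|\leq\beta q^2$, we get $|\sigma_tU|\leq Cq^2|W|$. Using \cref{eq:commutator}, the right-hand side is then bounded by $\|\sigma g\|+C\sum_{j<2m}\|q^{2m-j}D^jW\|$.

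Next I would test the equation for $W$ with $(A+\mu)W$ and integrate in time. The term $\epsilon(W_t,(A+\mu)W)$ equals $\tfrac\epsilon2\tfrac{d}{dt}a_\mu(W,W)-\tfrac\epsilon2\mu'\|W\|^2$. By periodicity the total derivative vanishes, and $|\mu'|\leq C\mu q$, which is lower order. This gives control of $\|(A+\mu)W\|_{L^2(\mathcal Q)}$ by the right-hand side. Expanding $\|(A+\mu)W\|^2=\|AW\|^2+2\mu a(W,W)+\mu^2\|W\|^2$ and using \cref{eq:ellreg} with interpolation, this controls $\sum_j L^{2m-j}\|q^{2m-j}D^jW\|$ at each time. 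For $L$ large, these large multiples absorb the commutator and $\sigma_t$ terms; the main point is that the perturbations carry $q$-powers but no $L$. Then $W_t$ is bounded from the equation.

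Finally I would return from $W$ to $U$. Since $D^j(\sigma^{-1}W)$ involves derivatives of $\sigma^{-1}$ of size $q^k\sigma^{-1}$, it follows that $\mathcal R_{\sigma,\mathcal Q}(U)\leq C\sum_j\|q^{2m-j}D^jW\|$. In addition, $\sigma U_t=W_t-\sigma_t\sigma^{-1}W$. I expect the main obstacle to be making this absorption uniform over all allowed weights and in $L$. That requires the constant in the $L$-weighted graph bound to be independent of $L$, which the shift structure $\mu=(Lq)^{2m}$ together with the interpolation $\|q^{2m-j}D^jW\|\lesssim\|AW\|+\mu\|W\|$ should provide, after tracking the factors of $L$ explicitly.
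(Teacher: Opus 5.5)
Your proposal is correct and follows essentially the same route as the paper. The ingredients match: the form bounds via $L^{2m}-C\ge B$ and $q^m\ge q$; conjugation $W=\sigma U$; the periodic energy identity with the $\mu'$ term absorbed for large $L$; the parameter-dependent graph bound at scale $Lq$; and absorption of the commutator (which gains $1/L$) and of the $\partial_t\log\sigma$ term (which gains $L^{-2m}$). The only cosmetic difference is that you test with $(A+\mu)W$ and recover $W_t$ from the equation, whereas the paper expands $\|(\epsilon\partial_t+A+\mu)v\|_{L^2(\mathcal Q)}^2$ directly.
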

\begin{proof}
{\bf Step 1.} In this step,
we prove a graph estimate for the unweighted shifted operator.

Sobolev extension on the bounded Lipschitz domain $\Omega$ and interpolation, followed by \cref{eq:ellreg}, give, uniformly for $r\geq1$,
\begin{align}\label{eq:parametergraph}
\sum_{j=0}^{2m}r^{2m-j}\|D^jv\|_{L^2(\Omega)}
\leq C\bigl(\|Av\|_{L^2(\Omega)}+r^{2m}\|v\|_{L^2(\Omega)}\bigr),
\qquad v\in D(A).
\end{align}

For a smooth periodic graph-domain function, time independence and self-adjointness of $A$ give
\begin{align}\label{eq:shiftidentity}
\|(\epsilon\partial_t+A+\mu)v\|_{L^2(\mathcal Q)}^2
=
\|v_t\|_{L^2(\mathcal Q)}^2+\|Av\|_{L^2(\mathcal Q)}^2+\|\mu v\|_{L^2(\mathcal Q)}^2
+2\int_0^T\mu a(v,v)\,dt
-\epsilon\int_0^T\mu'\|v\|_{L^2(\Omega)}^2\,dt.
\end{align}
By \cref{eq:qclass}, $$|\mu'|/\mu^2\leq2m\beta L^{-2m}q_c^{1-2m},$$ so the last term in \cref{eq:shiftidentity} is absorbed for large $L$. Applying \cref{eq:parametergraph} with $r=Lq(t)$ then gives
\begin{align}\label{eq:parameterparabolic}
\|v_t\|_{L^2(\mathcal Q)}+\sum_{j=0}^{2m}\|(Lq)^{2m-j}D^jv\|_{L^2(\mathcal Q)}
\leq C\|(\epsilon\partial_t+A+\mu)v\|_{L^2(\mathcal Q)}.
\end{align}
Existence follows by a standard spectral Galerkin argument, and uniqueness follows from \cref{eq:parameterparabolic}.

\medskip

{\bf Step 2.}
In this step, we transfer the graph estimate to the weighted setting. The weight definition and \cref{eq:qclass} give
\begin{align}\label{eq:allowed-weight-derivatives}
|D^j\sigma^{\pm1}|\leq Cq^j\sigma^{\pm1}\quad(0\leq j\leq2m),
\qquad |\partial_t\log\sigma|\leq Cq^2.
\end{align}
\Cref{ass:localization} ensures that multiplication by $\sigma^{\pm1}$ preserves $D(A)$. Thus $v=\sigma U$ belongs to the graph space and satisfies
\begin{align}\label{9.26-eq2}
(\epsilon\partial_t+A+\mu)v
=\sigma g+[A,\sigma]\sigma^{-1}v
+\epsilon(\partial_t\log\sigma)v.
\end{align}
Before fixing $L$, by $q\geq1$ and $m\geq1$, \cref{eq:commutator,eq:allowed-weight-derivatives} give
\begin{align}\label{9.26-eq3}
\|[A,\sigma]\sigma^{-1}v\|_{L^2(\mathcal Q)}
\leq\frac{C}{L}\sum_{j=0}^{2m}\|(Lq)^{2m-j}D^jv\|_{L^2(\mathcal Q)}.
\end{align}
The temporal weight term satisfies
\begin{align}\label{9.26-eq4}
\|(\partial_t\log\sigma)v\|_{L^2(\mathcal Q)}
\leq\frac{C}{L^{2m}}\|\mu v\|_{L^2(\mathcal Q)}.
\end{align}
Combining \cref{9.26-eq2,9.26-eq3,9.26-eq4}, increasing $L$ absorbs both terms in \cref{eq:parameterparabolic} and gives
\begin{align}\label{eq:conjugated-graph-estimate}
\|v_t\|_{L^2(\mathcal Q)}
+\sum_{j=0}^{2m}\|(Lq)^{2m-j}D^jv\|_{L^2(\mathcal Q)}
\leq C\|\sigma g\|_{L^2(\mathcal Q)}.
\end{align}
Since $U=\sigma^{-1}v$, the product rule and \cref{eq:spacetimegraphnorms,eq:parameterparabolic,eq:allowed-weight-derivatives,eq:conjugated-graph-estimate} yield
\begin{align*}
\|\sigma U_t\|_{L^2(\mathcal Q)}+\mathcal R_{\sigma,\mathcal Q}(U)
&\leq C\Bigl(\|v_t\|_{L^2(\mathcal Q)}+\|q^2v\|_{L^2(\mathcal Q)}
+\sum_{j=0}^{2m}\sum_{\ell=0}^j\|q^{2m-\ell}D^\ell v\|_{L^2(\mathcal Q)}\Bigr)\\
&\leq C\Bigl(\|v_t\|_{L^2(\mathcal Q)}
+\sum_{j=0}^{2m}\|(Lq)^{2m-j}D^jv\|_{L^2(\mathcal Q)}\Bigr)
\leq C\|\sigma g\|_{L^2(\mathcal Q)},
\end{align*}
so \cref{eq:primitivegraph} follows.

\medskip

{\bf Step 3.}
In this step, we verify the coercivity estimate.

By \cref{eq:unshiftedformbound,eq:unshiftedcoercive,eq:shiftedform} and $\mu=L^{2m}q^{2m}$,
\begin{align*}
|a_\mu(v,w)|\leq(C+L^{2m})\mathcal N_{\sigma,\Omega}(v)\mathcal N_{\sigma^{-1},\Omega}(w).
\end{align*}
The corresponding lower bound is
\begin{align*}
a_\mu(v,\sigma^2v)
\geq c\mathcal N_{\sigma,\Omega}(v)^2+
(L^{2m}-C)\|q^m\sigma v\|_{L^2(\Omega)}^2.
\end{align*}
Choosing $L^{2m}-C\geq B$ and using $q^m\geq q$, we  obtain \cref{eq:formcoercive}.
\end{proof}

\subsection{Backward Euler regularity and temporal error estimates}

We now establish the weighted regularity of the backward Euler scheme and derive the temporal error estimates that will later be combined with the spatial comparison. Put $\mu_n=(Lq_n)^{2m}$.
Define the cellwise extension by\vspace{-2mm}
\begin{align}\label{eq:cellwise-extension}
\bar v(t)=\begin{cases}
v^n, & t\in(t_{n-1},t_n],\quad\text{for }D_+,\\
v^n, & t\in[t_n,t_{n+1}),\quad\text{for }D_-.
\end{cases}
\end{align}
Write $\widehat v$ for the continuous piecewise affine interpolant of the nodal sequence.
Then $\partial_t\widehat v=\overline{D_+v}$ for $D_+$ and $-\partial_t\widehat v=\overline{D_-v}$ for $D_-$, almost everywhere.
The extensions $\bar\mu,\bar g$ use the same time cells as $\bar v$.

We first control the variation of the weights and the shift on each time cell. These bounds will be used repeatedly in the sequel.
\begin{lemma}\label{lem:cell}
There exist $\delta_{\mathrm{time}},C>0$ such that, if ${\delta t}\le1$ and ${\delta t}q_{\max}^2\le\delta_{\mathrm{time}}$, the following hold for every $L\geq1$, $n$, and $x\in\Omega$.
\begin{enumerate}[label=(\roman*)]
\item For $t\in[t_{n-1},t_n]$ and $i\in\{n-1,n\}$,
$
q(t)\asymp q_i$ and $\sigma(t,x)\asymp\sigma_i(x)$.
\item  For $t\in(t_{n-1},t_n]$ with $D_+$, or $t\in[t_n,t_{n+1})$ with $D_-$,
\begin{align}\label{eq:cellbounds}
\biggl|\frac{q(t)}{q_n}-1\biggr|\le C{\delta t}q_n,
\qquad \biggl|\frac{\sigma(t,x)}{\sigma_n(x)}-1\biggr|\le C{\delta t}q_n^2.
\end{align}
On the same cells, the shift satisfies
\begin{align}\label{eq:cell-shift-bound}
|\mu(t)-\bar\mu(t)|\le CL^{2m}{\delta t}q(t)^{2m+1}.
\end{align}
\item For $D_+$, set\vspace{-3mm}
\begin{align*}
H_n(x)=\frac{\sigma_n(x)/\sigma_{n-1}(x)-1}{{\delta t}}.
\end{align*}
Then\vspace{-2mm}
\begin{align}
\|H_n\|_{L^\infty(\Omega)}
&\leq Cq_n^2.\label{eq:weight-time-quotient}
\end{align}
Moreover,\vspace{-2mm}
\begin{align}
|\mu_{n+1}-\mu_n|&\leq CL^{2m}{\delta t}q_n^{2m+1}.\label{eq:shift-time-increment}
\end{align}
\end{enumerate}
\end{lemma}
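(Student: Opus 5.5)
The plan is to reduce every claim to two elementary facts: a bound on the logarithmic derivatives of $q$ and of the weight $\sigma$, and the smallness of ${\delta t}\,q^2$ on each time cell.

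\textbf{Logarithmic derivatives.} From \cref{eq:qclass} we have $|(\log q)'|=|q'|/q\le\beta q$. For an allowed weight $\sigma=q^\tau\theta^{\pm1}$,
\begin{align*}
\partial_t\log\sigma=\tau\,\frac{q'}{q}\pm q'\rho .
\end{align*}
Since $\rho\in C^\infty(\overline\Omega)$ is bounded and $|\tau|\le\tau_*$, this gives
\begin{align*}
|\partial_t\log\sigma|\le \tau_*\beta q+\beta q^2\|\rho\|_\infty\le Cq^2 ,
\end{align*}
where we used $q\ge1$. This is \cref{eq:allowed-weight-derivatives}.

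\textbf{Step (i): comparability on a cell.} First compare $q$ at two points of one cell. Integrating $|(1/q)'|=|q'|/q^2\le\beta$ over an interval of length at most ${\delta t}$ gives $|1/q(t)-1/q_i|\le\beta{\delta t}$. Multiplying by $q_i$,
\begin{align*}
\Bigl|\frac{q_i}{q(t)}-1\Bigr|\le\beta{\delta t}q_i\le\beta\sqrt{{\delta t}}\sqrt{\delta_{\rm time}}\le\beta\sqrt{\delta_{\rm time}},
\end{align*}
using ${\delta t}\le1$ and ${\delta t}q_{\max}^2\le\delta_{\rm time}$. Choosing $\delta_{\rm time}$ small makes the right side at most $1/2$, hence $q(t)\asymp q_i$. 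Next, integrating $|\partial_t\log\sigma|\le Cq^2\le C'q_{\max}^2$ over the cell gives
\begin{align*}
|\log\sigma(t,x)-\log\sigma_i(x)|\le C{\delta t}q_{\max}^2\le C\delta_{\rm time},
\end{align*}
so $\sigma(t,x)\asymp\sigma_i(x)$.

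\textbf{Step (ii): sharper bounds in terms of $q_n$.} Using (i), the derivative bounds can be stated with $q_n$ in place of $q(t)$: on the cell, $|q'|\le\beta q^2\le Cq_n^2$ and $|\partial_t\log\sigma|\le Cq_n^2$. Integrating from $t_n$ to $t$ gives
\begin{align*}
\Bigl|\frac{q(t)}{q_n}-1\Bigr|\le C{\delta t}q_n,\qquad
|\log(\sigma/\sigma_n)|\le C{\delta t}q_n^2\le C\delta_{\rm time}.
\end{align*}
The inequality $|e^x-1|\le e^{|x|}|x|$ then converts the second bound into the weight estimate in \cref{eq:cellbounds}. For the shift, the mean value theorem gives
\begin{align*}
|\mu(t)-\mu_n|=L^{2m}\,|q(t)^{2m}-q_n^{2m}|\le L^{2m}\,2m\,\widetilde q^{\,2m-1}\,|q(t)-q_n| ,
\end{align*}
with $\widetilde q$ between $q(t)$ and $q_n$. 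Bounding $\widetilde q\asymp q(t)$ and $|q(t)-q_n|\le C{\delta t}q(t)^2$ yields \cref{eq:cell-shift-bound}.

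\textbf{Step (iii): difference quotients.} By periodicity of $q$, the same argument applies to the adjacent cell $[t_n,t_{n+1}]$. Then
\begin{align*}
H_n=\frac{e^{\lambda}-1}{{\delta t}},\qquad |\lambda|=|\log(\sigma_n/\sigma_{n-1})|\le C{\delta t}q_n^2,
\end{align*}
and since $|\lambda|$ is bounded by a constant, $|H_n|\le Cq_n^2$, which is \cref{eq:weight-time-quotient}. The increment \cref{eq:shift-time-increment} is the shift estimate from (ii) taken at $t=t_{n+1}$ on $[t_n,t_{n+1}]$, with $q_{n+1}\asymp q_n$.

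\textbf{Main obstacle.} The only delicate point is making every constant independent of $L$ and of the weight. This works because $L$ enters only as the overall factor $L^{2m}$, and the weight enters only through $\tau_*$ and $\|\rho\|_\infty$. The smallness threshold $\delta_{\rm time}$ must be fixed before any of this, from $\beta$, $\tau_*$ and $\rho$ alone.
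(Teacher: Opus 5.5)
Your proposal is correct and follows essentially the same route as the paper: integrate the bounds $|(1/q)'|\le\beta$ and $|\partial_t\log\sigma|\le Cq^2$ over a single cell, use the smallness of ${\delta t}\,q_{\max}^2$ to get comparability and the bound $|e^x-1|\le e^{|x|}|x|$, and read off the quotient and increment bounds at the nodes. The differences are cosmetic. You use the mean value theorem for $\mu$ instead of integrating $q^{2m-1}q'$. You also bound ${\delta t}\,q_i$ by $\sqrt{{\delta t}}\sqrt{\delta_{\mathrm{time}}}$, whereas the paper uses $q_{\max}\le q_{\max}^2$.
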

\begin{proof}
Let $t\in[t_{n-1},t_n]$ and $i\in\{n-1,n\}$. Choose $\delta_{\mathrm{time}}$ so that $\beta\delta_{\mathrm{time}}\leq1/2$. By \cref{eq:qclass},\vspace{-2mm}
\begin{align}\label{9.26-eq1}
\bigg|\frac1{q(t)}-\frac1{q_i}\bigg|
&=\bigg|\int_{t_i}^t\frac{q'(s)}{q(s)^2}\,ds\bigg|\leq\beta {\delta t}.
\end{align}
Multiplying \eqref{9.26-eq1} by $q_i$ gives\vspace{-2mm}
\begin{align*}
\bigg|\frac{q_i}{q(t)}-1\bigg|\leq\beta {\delta t}q_i\leq\frac{1}{2}.
\end{align*}
Hence $q(t)\asymp q_i$ and $|q(t)/q_i-1|\leq C{\delta t}q_i$.
Using \cref{eq:allowed-weight-derivatives}, we have
\begin{align}\label{eq:cell-log-weight}
\bigg|\log\frac{\sigma(t,x)}{\sigma_i(x)}\bigg|
&=\bigg|\int_{t_i}^t\partial_s\log\sigma(s,x)\,ds\bigg|\leq C{\delta t}q_i^2,
\end{align}
and\vspace{-2mm}
\begin{align}\label{eq:cell-weight-ratio}
\Big|\frac{\sigma(t,x)}{\sigma_i(x)}-1\Big|
&\leq C{\delta t}q_i^2 e^{C{\delta t}q_i^2}\leq C{\delta t}q_i^2.
\end{align}
By \cref{eq:cell-log-weight}, $\sigma(t,x)\asymp\sigma_i(x)$. Taking $t=t_n$ and $i=n-1$ in \cref{eq:cell-weight-ratio} and using $q_{n-1}\asymp q_n$ gives \cref{eq:weight-time-quotient}. By \cref{eq:qclass},
\begin{align}\label{eq:cell-shift-difference}
|\mu(t)-\mu_i|
&=2mL^{2m}\Big|\int_{t_i}^t q(s)^{2m-1}q'(s)\,ds\Big|
\leq CL^{2m}{\delta t}q_i^{2m+1}.
\end{align}
On either time cell associated with $t_n$, using \cref{eq:cell-shift-difference} with $i=n$, $\bar\mu(t)=\mu_n$, and $q_n\asymp q(t)$, we obtain the bound for $\mu$ in \cref{eq:cell-shift-bound}. Taking $t=t_{n+1}$ and $i=n$ in \cref{eq:cell-shift-difference} gives \cref{eq:shift-time-increment}.
\end{proof}

Unweighted step-uniform maximal parabolic regularity for backward Euler is standard (see \cite[Theorem~3.1]{KLL}). We prove the weighted periodic estimate below to keep the dependence on the shift and the weights explicit.
\begin{lemma}\label{lem:eulerMR}
Let $L_0$ be as in \cref{lem:graph}. There exist $L_1\geq L_0(1)$ and $\delta_{\mathrm{time}},C>0$ such that, for every $L\geq L_1$, ${\delta t}\leq1$, ${\delta t}q_{\max}^2\leq\delta_{\mathrm{time}}$, and $g\in\ell_{\delta t}^2(L^2(\Omega))$, the periodic problem
\begin{align}\label{eq:spacecontinuousEuler}
D_\pm u^n+(A+\mu_n)u^n=g^n
\end{align}
has a unique solution $u\in\ell_{\delta t}^2(D(A))$ satisfying
\begin{align}\label{eq:eulerMR}
\|\sigma D_\pm u\|_{\ell_{\delta t}^2(L^2(\Omega))}+\mathcal{R}_{\sigma,{\delta t}}(u)
\le C\|\sigma g\|_{\ell_{\delta t}^2(L^2(\Omega))}
\end{align}
for both orientations and all allowed weights.
\end{lemma}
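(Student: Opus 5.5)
We prove \cref{lem:eulerMR} by transcribing the argument of \cref{lem:graph} to the time grid. There are two stages: an unweighted periodic maximal-regularity estimate for the shifted backward Euler operator, and then a transfer to allowed weights by conjugation, where the commutator and the discrete time-derivative of the weight are absorbed for large $L$. Existence and uniqueness come for free. For fixed $N$ the periodic problem \cref{eq:spacecontinuousEuler} is a linear system on $D(A)^N$, and we can diagonalize it spectrally in the eigenbasis of $A$. Each eigenmode $\lambda$ gives a scalar circulant system with symbol $\pm(1-e^{\mp i\phi})/\delta t+\lambda+\mu_n$. The variable coefficient $\mu_n$ breaks exact diagonalization, so instead we use the a priori bound below together with a Fredholm/Galerkin argument in finitely many modes. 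The a priori estimate gives injectivity, hence solvability.

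\textbf{Step 1 (unweighted estimate).} We test the equation in $\ell^2_{\delta t}(H)$ and expand $\|D_\pm v+(A+\mu)v\|^2$, mimicking \cref{eq:shiftidentity}. The cross term $2(D_\pm v,Av)$ equals, by periodic summation by parts and self-adjointness, $\delta t\sum \delta t\,\|A^{1/2}D_\pm v\|^2\cdot(\pm\text{sign})$. More precisely, the identity $2(v^n-v^{n-1},Av^n)=a(v^n,v^n)-a(v^{n-1},v^{n-1})+a(v^n-v^{n-1},v^n-v^{n-1})$ shows that the telescoping part cancels by periodicity and the remainder is nonnegative. The cross term $2(D_\pm v,\mu v)$ is handled the same way. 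It produces a nonnegative numerical dissipation plus a telescoping sum with varying $\mu_n$, and this sum is bounded via \cref{eq:shift-time-increment} by $CL^{2m}\delta t\sum q_n^{2m+1}\|v^n\|^2\le C L^{-2m}\|\mu v\|^2$, using $q\ge q_c\ge1$ exactly as in Step 1 of \cref{lem:graph}. The term $2\sum\mu_n a(v^n,v^n)$ is nonnegative. Hence $\|D_\pm v\|+\|Av\|+\|\mu v\|\le C\|D_\pm v+(A+\mu)v\|$, and applying \cref{eq:parametergraph} nodewise with $r=Lq_n$ yields the discrete analogue of \cref{eq:parameterparabolic}.

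\textbf{Step 2 (weights).} We set $v^n=\sigma_nu^n$, which lies in $D(A)$ by \cref{ass:localization}. Then
\[
D_+v^n+(A+\mu_n)v^n=\sigma_ng^n+[A,\sigma_n]\sigma_n^{-1}v^n+\sigma_n\frac{u^n-u^{n-1}}{\delta t}-D_+(\sigma u)^n ,
\]
and the last difference equals $-H_n\,\sigma_{n-1}u^{n-1}\cdot(\sigma_n/\sigma_{n-1})$-type terms, namely $-(\sigma_n-\sigma_{n-1})u^{n-1}/\delta t=-H_n v^{n-1}$. The $D_-$ case is symmetric. The commutator is bounded as in \cref{9.26-eq3} by $C L^{-1}\sum_j\|(Lq)^{2m-j}D^jv\|$. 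The weight-difference term is bounded by \cref{eq:weight-time-quotient} as $\|H_nv^{n-1}\|\le Cq_n^2\|v^{n-1}\|\le CL^{-2m}\mu_{n-1}\|v^{n-1}\|$, using $q_{n-1}\asymp q_n$ from \cref{lem:cell}(i) and $m\ge1$. Both terms are absorbed for $L\ge L_1$. Returning to $u=\sigma^{-1}v$, the product rule with \cref{eq:allowed-weight-derivatives} at nodes gives the $\mathcal R_{\sigma,\delta t}$ bound. For $\sigma D_\pm u$ we write $\sigma_nD_+u^n=D_+v^n+H_nv^{n-1}$-type corrections (after swapping $\sigma_n$ and $\sigma_{n-1}$ using \cref{lem:cell}), and bound these corrections by $\|q^2v\|$, which is already controlled.

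\textbf{Main obstacle.} The main obstacle is Step 1's handling of the $\mu$-cross term with time-varying $\mu_n$ and the index shift $n\leftrightarrow n-1$. This is where $\delta t q_{\max}^2\le\delta_{\rm time}$ is needed, to keep $\mu_{n-1}\asymp\mu_n$ and $\sigma_{n-1}\asymp\sigma_n$. The constants must be tracked so that $L_1$ depends only on the fixed data and not on $\delta t$.
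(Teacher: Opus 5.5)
Your proposal is correct and follows the paper's proof essentially step by step: expand the squared periodic residual with summation by parts, bound the term $\sum(\mu_n-\mu_{n+1})\|v^n\|^2$ by $CL^{-2m}\|\mu v\|^2$, apply the parameter graph estimate nodewise with $r=Lq_n$, then conjugate by $\sigma_n$ and absorb the commutator and $H_nv^{n-1}$ terms for large $L$. Two small inaccuracies, neither a gap: the scalar shift $\mu_n$ does not obstruct diagonalization in the eigenbasis of $A$ (each mode gives a scalar periodic recurrence with factors $1+\delta t(\lambda+\mu_n)>1$, which is how the paper obtains existence directly), and your conjugation identity flips the sign of the weight term (correctly, $\sigma_nD_+u^n=D_+v^n-H_nv^{n-1}$), which is harmless because only its norm enters.
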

\begin{proof}
We prove the lemma for $D_+$. The case $D_-$ follows by the time reflection $n\mapsto -n$, with the same constants.

{\bf Step 1.} In this step, we obtain the unweighted graph estimate.

Let $v\in\ell_{\delta t}^2(D(A))$ be $N$-periodic. Expanding the squared norm and applying periodic summation by parts give
\begin{align}\label{eq:euleridentity}
\|D_+v+(A+\mu_n)v^n\|_{\ell_{\delta t}^2(L^2(\Omega))}^2
&=\|D_+v\|_{\ell_{\delta t}^2(L^2(\Omega))}^2+\|(A+\mu_n)v^n\|_{\ell_{\delta t}^2(L^2(\Omega))}^2
+{\delta t}\|A^{1/2}D_+v\|_{\ell_{\delta t}^2(L^2(\Omega))}^2 \notag\\
&\quad + {\delta t}\|\sqrt{\mu_n}D_+v\|_{\ell_{\delta t}^2(L^2(\Omega))}^2
-\sum_{n=0}^{N-1}(\mu_{n+1}-\mu_n)\|v^n\|_{L^2(\Omega)}^2.
\end{align}
By \cref{eq:shift-time-increment}, $q_n\geq1$, and positivity of $A$, we have
\begin{align}\label{9.26-eq5}
\Bigl|\sum_{n=0}^{N-1}(\mu_{n+1}-\mu_n)\|v^n\|_{L^2(\Omega)}^2\Bigr|
&\leq CL^{2m}{\delta t}\sum_{n=0}^{N-1} q_n^{2m+1}\|v^n\|_{L^2(\Omega)}^2
\leq CL^{-2m}\|\mu_nv^n\|_{\ell_{\delta t}^2(L^2(\Omega))}^2,
\end{align}
and
\begin{align}\label{9.26-eq6}
\|(A+\mu_n)v^n\|_{L^2(\Omega)}^2
&\geq\|Av^n\|_{L^2(\Omega)}^2+\|\mu_nv^n\|_{L^2(\Omega)}^2.
\end{align}
For large $L$,  combining \cref{eq:euleridentity,9.26-eq5,9.26-eq6}, and applying \cref{eq:parametergraph} at each node yield
\begin{align}\label{eq:euler-unweighted-graph}
\|D_+v\|_{\ell_{\delta t}^2(L^2(\Omega))}+\sum_{j=0}^{2m}\|(Lq)^{2m-j}D^jv\|_{\ell_{\delta t}^2(L^2(\Omega))}
\leq C\|D_+v+(A+\mu_n)v^n\|_{\ell_{\delta t}^2(L^2(\Omega))}.
\end{align}

\medskip

{\bf Step 2.} In this step, we study the conjugation of the recurrence.

By \cref{ass:localization}, $v^n=\sigma_nu^n\in D(A)$. Conjugating \cref{eq:spacecontinuousEuler} gives
\begin{align}\label{9.26-eq7}
D_+v^n+(A+\mu_n)v^n
=\sigma_ng^n+[A,\sigma_n]\sigma_n^{-1}v^n
+H_nv^{n-1}.
\end{align}
By \cref{eq:commutator,eq:weight-time-quotient,eq:cellbounds} and periodic reindexing,
it holds that
\begin{align}\label{9.26-eq8}
\|[A,\sigma_n]\sigma_n^{-1}v^n\|_{\ell_{\delta t}^2(L^2(\Omega))}
&\leq\frac{C}{L}\sum_{j=0}^{2m}\|(Lq)^{2m-j}D^jv\|_{\ell_{\delta t}^2(L^2(\Omega))},
\end{align}
and
\begin{align}\label{9.26-eq9}
\|H_nv^{n-1}\|_{\ell_{\delta t}^2(L^2(\Omega))}
&\leq C\|q_n^2v^{n-1}\|_{\ell_{\delta t}^2(L^2(\Omega))}
=C\|q_{n+1}^2v^n\|_{\ell_{\delta t}^2(L^2(\Omega))}
\notag
\\
&\leq C\|q_n^2v^n\|_{\ell_{\delta t}^2(L^2(\Omega))}
\leq CL^{-2m}\|\mu_nv^n\|_{\ell_{\delta t}^2(L^2(\Omega))}.
\end{align}
By \cref{9.26-eq7,9.26-eq8,9.26-eq9}, choosing $L$ sufficiently large allows us to absorb $[A,\sigma_n]\sigma_n^{-1}v^n$ and $H_nv^{n-1}$ into the left-hand side of \cref{eq:euler-unweighted-graph}. The identity
\begin{align*}
\sigma_nD_+u^n=D_+v^n-H_nv^{n-1},
\end{align*}
together with the product rule and \cref{eq:allowed-weight-derivatives}, then yields \cref{eq:eulerMR}.

Existence and uniqueness follow by expansion in the eigenbasis of $A$: each scalar recurrence has factors $1+{\delta t}(\lambda+\mu_n)>1$ and a unique periodic solution. Estimate \cref{eq:euler-unweighted-graph} permits passage from finite spectral sums to arbitrary $g\in\ell_{\delta t}^2(L^2(\Omega))$.
\end{proof}

The continuous weighted graph estimate also holds when the shift is frozen on each time cell. This will be used to compare the discrete and continuous auxiliary problems.
\begin{lemma}\label{lem:frozen}
Let $L_0$ be as in \cref{lem:graph}.
There exists $C>0$ such that, for every $L\geq L_0(1)$, one can choose $\delta_{\mathrm{time}}(L)>0$ for which the following holds whenever ${\delta t}\leq1$ and ${\delta t}q_{\max}^2\leq\delta_{\mathrm{time}}(L)$. For either sign of $\epsilon$ and either cellwise extension $\bar\mu$ of $\mu_n$, the periodic problem
\begin{align}\label{eq:frozencontinuous}
(\epsilon\partial_t+A+\bar\mu)U=G
\end{align}
has a unique solution in $H^1(0,T;H)\cap L^2(0,T;D(A))$ with $U(0)=U(T)$ for every $G\in L^2(0,T;H)$, and
\begin{align}\label{eq:frozengraph}
\|\sigma U_t\|_{L^2(\mathcal Q)}+\mathcal{R}_{\sigma,\mathcal Q}(U)\le C\|\sigma G\|_{L^2(\mathcal Q)}.
\end{align}
\end{lemma}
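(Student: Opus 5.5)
The plan is to treat \cref{eq:frozencontinuous} as a perturbation of the smooth-shift problem in \cref{lem:graph}. Write
\begin{align*}
(\epsilon\partial_t+A+\bar\mu)U=(\epsilon\partial_t+A+\mu)U+(\bar\mu-\mu)U.
\end{align*}
By \cref{eq:cell-shift-bound}, on each cell we have $|\bar\mu-\mu|\le CL^{2m}\delta t\,q^{2m+1}$. We will fix $L\geq L_0(1)$ and use \cref{lem:graph} with margin $B=1$.

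\textbf{A priori estimate.} Suppose $U$ is a periodic solution in the stated class. Then $U$ solves $(\epsilon\partial_t+A+\mu)U=G+(\mu-\bar\mu)U$. The a priori part of \cref{eq:primitivegraph} (Steps 1--2 of that proof, which apply to any periodic graph function) gives
\begin{align*}
\|\sigma U_t\|_{L^2(\mathcal Q)}+\mathcal R_{\sigma,\mathcal Q}(U)
\le C\|\sigma G\|_{L^2(\mathcal Q)}+CL^{2m}\delta t\,\|q^{2m+1}\sigma U\|_{L^2(\mathcal Q)}.
\end{align*}
The last norm equals $\|q\cdot q^{2m}\sigma U\|$. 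Since $q\le q_{\max}$ and $q\ge1$,
\begin{align*}
\delta t\,q^{2m+1}\le \delta t\,q_{\max}\,q^{2m}\le \delta t\,q_{\max}^2\,q^{2m}.
\end{align*}
Hence the perturbation is bounded by
\begin{align*}
CL^{2m}\delta_{\rm time}\|q^{2m}\sigma U\|_{L^2(\mathcal Q)}\le CL^{2m}\delta_{\rm time}\,\mathcal R_{\sigma,\mathcal Q}(U).
\end{align*}
Choosing $\delta_{\rm time}(L)$ with $CL^{2m}\delta_{\rm time}\le 1/2$ absorbs this term, which yields \cref{eq:frozengraph} with a constant $C$ independent of $L$. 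This is why $\delta_{\rm time}$ must depend on $L$.

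\textbf{Existence and uniqueness.} Uniqueness follows at once from the a priori estimate with $G=0$, taking $\sigma=1$. For existence we use a fixed-point argument in the Banach space
\[
Y=\{U\in H^1(0,T;H)\cap L^2(0,T;D(A)):\ U(0)=U(T)\},
\]
with norm $\|U_t\|_{L^2(\mathcal Q)}+\mathcal R_{1,\mathcal Q}(U)$. Define $\Phi(W)$ as the unique solution, given by \cref{lem:graph}, of
\[
(\epsilon\partial_t+A+\mu)\Phi(W)=G+(\mu-\bar\mu)W .
\]
The same estimate as above shows that $\Phi$ is a contraction with ratio $1/2$, so $\Phi$ has a fixed point, and that fixed point solves \cref{eq:frozencontinuous}.

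Note that $(\mu-\bar\mu)W\in L^2(0,T;H)$ because $q$ is bounded. The resulting $U$ belongs to $Y$, and the weighted estimate then follows a posteriori from the first step. One could alternatively run the contraction in the weighted norm directly, since the weights are bounded above and below for fixed $q$.

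\textbf{Expected obstacle.} The only delicate point is the bookkeeping of powers. The cell error costs one extra factor of $q$ times $\delta t$. This must be dominated by the $q^{2m}$ term contained in $\mathcal R_{\sigma,\mathcal Q}$, which is exactly what the condition $\delta t\,q_{\max}^2\le\delta_{\rm time}$ provides (even with room to spare).

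One further check: \cref{eq:cell-shift-bound} requires the smallness from \cref{lem:cell}. We therefore take $\delta_{\rm time}(L)$ to be the minimum of that threshold and the absorption threshold above.
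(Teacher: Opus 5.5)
Your argument is correct and is essentially the paper's. Both treat $\bar\mu-\mu$ as a perturbation of the smooth-shift operator of \cref{lem:graph}, bound it by $C(L)\,\delta t\,q_{\max}\,\mathcal R_{\sigma,\mathcal Q}(U)$, and choose $\delta_{\mathrm{time}}(L)$ inside the range of \cref{lem:cell} so that this term is absorbed with an $L$-independent constant. The paper packages existence, uniqueness and the weighted bound into a single Neumann series for the load $F$ in $L^2(\mathcal Q)$ with norm $\|\sigma\,\cdot\|_{L^2(\mathcal Q)}$, which also sidesteps a small point in your contraction: in the abstract setting, completeness of $Y$ under $\|U_t\|_{L^2(\mathcal Q)}+\mathcal R_{1,\mathcal Q}(U)$ is not automatic, so it is safer to add $\|AU\|_{L^2(\mathcal Q)}$ to the norm, which the identity in Step~1 of \cref{lem:graph} also controls.
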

\begin{proof}
Let $\mathscr L_\mu U:=(\epsilon\partial_t+A+\mu)U$ on the subspace of $H^1(0,T;H)\cap L^2(0,T;D(A))$ with $U(0)=U(T)$. By \cref{lem:graph}, $\mathscr L_\mu$ is invertible. Denote the constant in \cref{eq:primitivegraph} by $C_0$. From \cref{eq:cell-shift-bound,eq:spacetimegraphnorms},
\begin{align}\label{eq:frozen-shift-perturbation}
\|\sigma(\bar\mu-\mu)U\|_{L^2(\mathcal Q)}
&\leq C(L){\delta t}q_{\max}\|q^{2m}\sigma U\|_{L^2(\mathcal Q)}
\leq C(L){\delta t}q_{\max}\mathcal R_{\sigma,\mathcal Q}(U).
\end{align}
Choose $\delta_{\mathrm{time}}(L)$ within the range of \cref{lem:cell} so that $C_0C(L)\delta_{\mathrm{time}}(L)\leq1/2$, with $C(L)$ from \cref{eq:frozen-shift-perturbation}. Since $q_{\max}\geq1$, we have ${\delta t}q_{\max}\leq {\delta t}q_{\max}^2\leq\delta_{\mathrm{time}}(L)$. For every $F\in L^2(\mathcal Q)$, applying \cref{eq:primitivegraph,eq:frozen-shift-perturbation} to $\mathscr L_\mu^{-1}F$ gives
\begin{align}\label{eq:frozen-contraction}
\|\sigma(\bar\mu-\mu)\mathscr L_\mu^{-1}F\|_{L^2(\mathcal Q)}
\leq C_0C(L){\delta t}q_{\max}\|\sigma F\|_{L^2(\mathcal Q)}
\leq\dfrac12\|\sigma F\|_{L^2(\mathcal Q)}.
\end{align}
By \cref{eq:frozen-contraction}, the Neumann series in $L^2(\mathcal Q)$, equipped with the norm $\|\sigma\,\cdot\|_{L^2(\mathcal Q)}$, yields a unique solution of
\begin{align*}
F+(\bar\mu-\mu)\mathscr L_\mu^{-1}F=G.
\end{align*}
The solution satisfies
\begin{align*}
\|\sigma F\|_{L^2(\mathcal Q)}\leq2\|\sigma G\|_{L^2(\mathcal Q)}.
\end{align*}
Then $U=\mathscr L_\mu^{-1}F$ is the unique solution of \cref{eq:frozencontinuous}, and \cref{eq:primitivegraph} gives
\begin{align*}
\|\sigma U_t\|_{L^2(\mathcal Q)}+\mathcal R_{\sigma,\mathcal Q}(U)
\leq C_0\|\sigma F\|_{L^2(\mathcal Q)}
\leq2C_0\|\sigma G\|_{L^2(\mathcal Q)}.
\end{align*}
Thus \cref{eq:frozengraph} holds with $C=2C_0$, independent of $L$.
\end{proof}

The estimates \cref{eq:eulerMR,eq:frozengraph} yield the temporal $L^2$ error by duality. Spatial interpolation then gives the energy error.
\begin{theorem}\label{thm:timeerror}
Under the hypotheses and temporal restrictions of \cref{lem:eulerMR,lem:frozen}, there exists $C>0$ such that, for every $g\in\ell_{\delta t}^2(L^2(\Omega))$ and every allowed weight, the solutions $u$ of \cref{eq:spacecontinuousEuler} and $U$ of \cref{eq:frozencontinuous} with $G=\bar g$ satisfy
\begin{align}\label{eq:temporal-errors}
\|\sigma(U-\bar u)\|_{L^2(\mathcal Q)}\leq C{\delta t}\|\sigma g\|_{\ell_{\delta t}^2(L^2(\Omega))}.
\end{align}
Their weighted energy error satisfies
\begin{align}\label{eq:temporal-energy-error}
\mathcal{N}_{\sigma,\mathcal Q}(U-\bar u)\leq C\sqrt{{\delta t}}\|\sigma g\|_{\ell_{\delta t}^2(L^2(\Omega))}.
\end{align}
\end{theorem}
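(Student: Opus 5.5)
We work with the orientation $D_+$; the case $D_-$ follows by the time reflection already used in \cref{lem:eulerMR}. The plan is to split the error through the piecewise affine interpolant:
\[
U-\bar u=(U-\widehat u)+(\widehat u-\bar u).
\]
We then estimate the second piece directly from \cref{lem:eulerMR}, and the first piece by a duality argument built on \cref{lem:frozen} applied with the reciprocal weight $\sigma^{-1}$.

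\textbf{Step 1: the interpolation piece.} On $(t_{n-1},t_n]$ we have $\widehat u-\bar u=(t-t_n)D_+u^n$. By \cref{lem:cell}(i), $\sigma(t)\asymp\sigma_n$ there. Hence
\[
\|\sigma(\widehat u-\bar u)\|_{L^2(\mathcal Q)}\le C\delta t\,\|\sigma D_+u\|_{\ell^2_{\delta t}(L^2(\Omega))}\le C\delta t\,\|\sigma g\|_{\ell^2_{\delta t}(L^2(\Omega))},
\]
by \cref{eq:eulerMR}. In the same way, $\widehat u-\bar u$ is a combination of $u^n$ and $u^{n-1}$ on each cell. Using $q_{n-1}\asymp q_n$ and periodic reindexing, we get $\mathcal R_{\sigma,\mathcal Q}(\widehat u-\bar u)\le C\mathcal R_{\sigma,\delta t}(u)\le C\|\sigma g\|$.

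\textbf{Step 2: duality for $e=U-\widehat u$.} Since $\partial_t\widehat u=\overline{D_+u}$, the definition of $u$ gives $\partial_t\widehat u+(A+\bar\mu)\bar u=\bar g$. Therefore $e$ is periodic, because $\widehat u$ is periodic, and
\[
(\partial_t+A+\bar\mu)e=(A+\bar\mu)(\widehat u-\bar u).
\]
This right-hand side is not small in weighted $L^2$, so we use duality. Fix $\varphi\in L^2(\mathcal Q)$ and let $W$ solve the periodic adjoint problem $(-\partial_t+A+\bar\mu)W=\sigma\varphi$. This problem is solvable by \cref{lem:frozen} with $\epsilon=-1$ and the same cellwise $\bar\mu$. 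The weight $\sigma^{-1}$ is also allowed, so \cref{eq:frozengraph} gives
\[
\mathcal R_{\sigma^{-1},\mathcal Q}(W)\le C\|\varphi\|_{L^2(\mathcal Q)}.
\]
Periodicity removes the temporal boundary terms, and self-adjointness of $A$ then yields
\[
(\sigma e,\varphi)_{L^2(\mathcal Q)}=\int_0^T\bigl(\widehat u-\bar u,(A+\bar\mu)W\bigr)_H\,dt\le\|\sigma(\widehat u-\bar u)\|\,\|\sigma^{-1}(A+\bar\mu)W\|.
\]
We still need $\|\sigma^{-1}AW\|\le C\mathcal R_{\sigma^{-1},\mathcal Q}(W)$. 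This follows because $A$ is a bounded map from $D(A)\subset H^{2m}$ to $L^2$. Write $\sigma^{-1}AW=A(\sigma^{-1}W)-[A,\sigma^{-1}]W$, control the commutator by \cref{eq:commutator}, and bound $\|\sigma^{-1}W\|_{H^{2m}}$ by \cref{eq:allowed-weight-derivatives}. The term $\|\sigma^{-1}\bar\mu W\|$ is bounded by $C(L)\|q^{2m}\sigma^{-1}W\|$. Taking the supremum over $\varphi$ and combining with Step 1 proves \cref{eq:temporal-errors}.

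\textbf{Step 3: energy error by interpolation.} At each time, apply a parameter-dependent Gagliardo--Nirenberg inequality on the Lipschitz domain (the same extension argument as in \cref{eq:parametergraph}) to $v=\sigma w$:
\[
q^{2(m-j)}\|D^jv\|^2\le C\|v\|\bigl(\|D^{2m}v\|+q^{2m}\|v\|\bigr),\qquad 0\le j\le m.
\]
Converting back to $w$ with \cref{eq:allowed-weight-derivatives}, and applying Cauchy--Schwarz in time, gives
\[
\mathcal N_{\sigma,\mathcal Q}(w)^2\le C\|\sigma w\|_{L^2(\mathcal Q)}\,\mathcal R_{\sigma,\mathcal Q}(w).
\]
Take $w=U-\bar u$. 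Its $L^2$ part is $O(\delta t)\|\sigma g\|$ by Step 2. Its $\mathcal R$ part is bounded by $\mathcal R_{\sigma,\mathcal Q}(U)+\mathcal R_{\sigma,\mathcal Q}(\bar u)\le C\|\sigma g\|$, using \cref{eq:frozengraph} and \cref{eq:eulerMR} together with $\sigma\asymp\sigma_n$ on cells. This yields \cref{eq:temporal-energy-error}.

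The main obstacle is Step 2: the argument requires the frozen-shift adjoint problem to be solvable with uniform weighted graph bounds for the reciprocal weight. This is exactly why \cref{lem:frozen} is proved for both signs of $\epsilon$ and for all allowed weights $\sigma^{\pm1}$. A second delicate point is the weighted bound for $\sigma^{-1}AW$, which we handle through the commutator estimate \cref{eq:commutator}.
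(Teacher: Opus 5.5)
Your proof is essentially the paper's. You use the same splitting $U-\bar u=(U-\widehat u)+(\widehat u-\bar u)$, the same duality with the periodic frozen-shift adjoint problem of \cref{lem:frozen} in the reciprocal weight $\sigma^{-1}$, and the same weighted interpolation $\mathcal N_{\sigma,\mathcal Q}(v)^2\le C\|\sigma v\|_{L^2(\mathcal Q)}\mathcal R_{\sigma,\mathcal Q}(v)$ for the energy error.

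The one place where you deviate is also the one step not justified by the stated hypotheses. Because you dualize only $E=U-\widehat u$, you must bound $\|\sigma^{-1}(A+\bar\mu)W\|_{L^2(\mathcal Q)}$. You do this by claiming that $A$ is bounded from $D(A)$, with the $H^{2m}$-norm, into $L^2(\Omega)$. \Cref{ass:form} only gives the reverse inequality $\|v\|_{H^{2m}(\Omega)}\le C\|Av\|_{L^2(\Omega)}$. The forward bound holds for the differential operators of \cref{prop:spatial-operators}, but it is not part of the abstract framework.

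The repair costs nothing, and the commutator detour becomes unnecessary. The dual equation gives $\sigma^{-1}(A+\bar\mu)W=\varphi+\sigma^{-1}\partial_tW$. Then \cref{eq:frozengraph} with weight $\sigma^{-1}$ already gives $\|\sigma^{-1}\partial_tW\|_{L^2(\mathcal Q)}\le C\|\varphi\|_{L^2(\mathcal Q)}$.

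After this substitution, and with the sign of your error equation corrected to $(\partial_t+A+\bar\mu)e=-(A+\bar\mu)(\widehat u-\bar u)$, your identity becomes $(\sigma(U-\bar u),\varphi)_{L^2(\mathcal Q)}=-\int_0^T(\widehat u-\bar u,\partial_tW)_H\,dt$. This is exactly the paper's \cref{eq:timecancellation}. The paper dualizes the full error $e_t=U-\bar u$ from the start, so that only $\partial_tY$ appears.

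Finally, the bound on $\mathcal R_{\sigma,\mathcal Q}(\widehat u-\bar u)$ in your Step 1 is never used and can be dropped.
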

\begin{proof}
{\bf Step 1.} In this step, we estimate the $L^2$ error by duality.

Put $\delta=\widehat u-\bar u$, $E=U-\widehat u$, and $e_t=U-\bar u=E+\delta$. The interpolation formula and \cref{eq:cellbounds,eq:eulerMR} give
\begin{align}\label{eq:timedelta}
\|\sigma\delta\|_{L^2(\mathcal Q)}
\leq C{\delta t}\|\sigma D_\pm u\|_{\ell_{\delta t}^2(L^2(\Omega))}
\leq C{\delta t}\|\sigma g\|_{\ell_{\delta t}^2(L^2(\Omega))}.
\end{align}
Subtracting \cref{eq:spacecontinuousEuler,eq:frozencontinuous} yields
\begin{align}
\epsilon\partial_tE+(A+\bar\mu)e_t=0
\qquad\text{in }L^2(0,T;H).\label{eq:temporal-error-equation}
\end{align}
Let $Y$ solve the periodic adjoint problem
\begin{align}
(-\epsilon\partial_t+A+\bar\mu)Y=\sigma^2e_t.\label{eq:temporal-dual-equation}
\end{align}
Applying \cref{eq:frozengraph} with $\sigma^{-1}$ gives
$\|\sigma^{-1}Y_t\|_{L^2(\mathcal Q)}\leq C\|\sigma e_t\|_{L^2(\mathcal Q)}$.
Both $E$ and $Y$ belong to $H^1(0,T;H)\cap L^2(0,T;D(A))$ and satisfy $E(0)=E(T)$ and $Y(0)=Y(T)$. Using \cref{eq:temporal-error-equation,eq:temporal-dual-equation}, periodic integration by parts gives
\begin{align}\label{eq:timecancellation}
\|\sigma e_t\|_{L^2(\mathcal Q)}^2
=\int_0^T\bigl((\epsilon\partial_tE,Y)_H+a_{\bar\mu}(e_t,Y)\bigr)\,dt
-\epsilon\int_0^T(\delta,Y_t)_H\,dt
=-\epsilon\int_0^T(\delta,Y_t)_H\,dt.
\end{align}
Cauchy's inequality in \cref{eq:timecancellation}, together with \cref{eq:timedelta,eq:frozengraph}, gives
\begin{align}\label{eq:temporal-l2-error}
\|\sigma e_t\|_{L^2(\mathcal Q)}
\leq C\|\sigma\delta\|_{L^2(\mathcal Q)}
\leq C{\delta t}\|\sigma g\|_{\ell_{\delta t}^2(L^2(\Omega))}.
\end{align}

{\bf Step 2.} In this step, we  obtain the energy error by spatial interpolation.

The graph estimates \cref{eq:frozengraph,eq:eulerMR} and the cell bounds \cref{eq:cellbounds} give
\begin{align}
\mathcal R_{\sigma,\mathcal Q}(e_t)
\leq\mathcal R_{\sigma,\mathcal Q}(U)+\mathcal R_{\sigma,\mathcal Q}(\bar u)
\leq C\|\sigma g\|_{\ell_{\delta t}^2(L^2(\Omega))}.\label{eq:temporal-graph-error}
\end{align}
For $v\in L^2(0,T;H^{2m}(\Omega))$, fix time and put $w=\sigma v$. Sobolev interpolation and Young's inequality give, for $0\leq j\leq m$,
\begin{align}\label{eq:spatial-parameter-interpolation}
q^{2(m-j)}\|D^jw\|_{L^2(\Omega)}^2
\leq C\|w\|_{L^2(\Omega)}
\bigl(\|w\|_{H^{2m}(\Omega)}+q^{2m}\|w\|_{L^2(\Omega)}\bigr).
\end{align}
Since $q\geq1$, the product rule and \cref{eq:allowed-weight-derivatives,eq:graphnorms} convert the sum of \cref{eq:spatial-parameter-interpolation} over $j$ to
\begin{align*}
\mathcal N_{\sigma,\Omega}(v)^2\leq C\|\sigma v\|_{L^2(\Omega)}\mathcal R_{\sigma,\Omega}(v).
\end{align*}
Integration in time, Cauchy's inequality, and \cref{eq:spacetimegraphnorms} yield
\begin{align}\label{eq:weighted-interpolation}
\mathcal N_{\sigma,\mathcal Q}(v)^2
\leq C\|\sigma v\|_{L^2(\mathcal Q)}\mathcal R_{\sigma,\mathcal Q}(v).
\end{align}
Applying \cref{eq:weighted-interpolation} to $e_t$ and using \cref{eq:temporal-l2-error,eq:temporal-graph-error} gives
$\mathcal N_{\sigma,\mathcal Q}(e_t)^2\leq C{\delta t}\|\sigma g\|_{\ell_{\delta t}^2(L^2(\Omega))}^2$.
\end{proof}

\begin{remark}
The constants in \cref{eq:eulerMR,eq:temporal-errors,eq:temporal-energy-error} are uniform in ${\delta t}\lambda$, where $\lambda$ ranges over the eigenvalues of $A$. Thus no spectral CFL condition ${\delta t}\lambda\leq C$ is required.
\end{remark}

\section{Weighted error estimates for spatial discretization}\label{sec:error}

In this section we compare the spatially continuous and fully discrete auxiliary problems on the same Euler grid. The temporal estimate in \cref{thm:timeerror} will then yield the full space--time error.

Throughout this section we assume \cref{ass:form,ass:localization,ass:weighted}, \cref{ass:interfaces,Papprox,Ptest}, and a lift $\mathcal L_h$ satisfying $J_h\mathcal L_h=\Gamma_h$ and \cref{eq:loadinterface}. We choose the margin $B$ in \cref{eq:formcoercive} sufficiently large in terms of the constant in \cref{eq:testtime} and the fixed weight bounds, and then fix $L$ as in \cref{lem:graph,lem:eulerMR}.

Under these assumptions, there exist $\eta_*,\delta_x,\delta_{\mathrm{time}},C>0$, with $\delta_x\leq\delta_{\rm num}$, such that the estimates below hold whenever\vspace{-2mm}
\begin{align*}
a_E\leq1,\qquad \eta_A\leq\eta_*,\qquad {\delta t}\leq1,\qquad
hq_{\max}\leq\delta_x,\qquad {\delta t}q_{\max}^2\leq\delta_{\mathrm{time}}.
\end{align*}
The temporal threshold is chosen within the ranges of \cref{lem:eulerMR,lem:frozen}.

For $g\in\ell_{\delta t}^2(H)$, let $u$ solve \cref{eq:spacecontinuousEuler}, and let $u_h\in\ell_{\delta t}^2(X_h)$ solve the periodic auxiliary problem
\begin{align}\label{eq:auxEuler}
D_\pm u_h^n+A_hu_h^n+\mu_nG_hu_h^n=J_hg^n.
\end{align}
Here the time difference is taken with respect to the inner product of $X_h$, while $G_h=J_hR_h$ represents the reconstruction mass in the shift.
It is readily seen that \cref{eq:auxEuler} has a unique periodic solution.

We now compare these two solutions. The following lemma combines weighted C\'ea-type energy and Aubin--Nitsche $L^2$ estimates, allowing nonorthogonal transfers and stiffness defects.
\begin{lemma}\label{lem:spatialEuler}
Under the assumptions of this section, let $u$ and $u_h$ solve \cref{eq:spacecontinuousEuler,eq:auxEuler} for $g\in\ell_{\delta t}^2(H)$. For every allowed weight $\sigma$, $e_s=u-R_hu_h$ satisfies
\begin{align}\label{eq:spatialEulerError}
\mathcal N_{\sigma,{\delta t}}(e_s)
\leq C(a_E+\eta_A)\|\sigma g\|_{\ell_{\delta t}^2(L^2(\Omega))}.
\end{align}
The weighted $L^2$ error satisfies\vspace{-2mm}
\begin{align}\label{eq:spatialEulerL2Error}
\|\sigma e_s\|_{\ell_{\delta t}^2(L^2(\Omega))}
\leq C(a_0+a_E^2+\eta_A)\|\sigma g\|_{\ell_{\delta t}^2(L^2(\Omega))}.
\end{align}
Recall that $a_E,a_0$ are defined in \cref{eq:approximation,eq:approximation-l2} and $\eta_A$ in \cref{eq:stiffnessdefect}.
\end{lemma}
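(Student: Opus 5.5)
The plan is to split the error through the reconstructed transfer $Q_h=R_hJ_h$. Set
\[
e_s=(u-Q_hu)+R_h\zeta,\qquad \zeta:=J_hu-u_h\in\ell_{\delta t}^2(X_h).
\]
The first piece is controlled by \cref{eq:approximation,eq:approximation-l2}. The weighted maximal regularity \cref{eq:eulerMR} of \cref{lem:eulerMR} then gives $\mathcal R_{\sigma,\delta t}(u)\le C\|\sigma g\|$, and $\mathcal R_{\sigma^{-1},\delta t}$ is controlled in the same way for the dual problem below.

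For the discrete piece I first derive an error equation. Test \cref{eq:spacecontinuousEuler} against $R_hw$ with $w\in X_h$. By \cref{eq:massinterface}, $(D_\pm u^n,R_hw)_H=(D_\pm J_hu^n,w)_{X_h}$, because $J_h$ is time independent. Subtract \cref{eq:auxEuler}, written through \cref{eq:stiffnessinterface} and $(G_hz,w)_{X_h}=(R_hz,R_hw)_H$. This yields, for every $w\in X_h$ and every node $n$,
\begin{align*}
(D_\pm\zeta^n,w)_{X_h}+a_{\mu_n}(R_h\zeta^n,R_hw)+\mathfrak b_h(\zeta^n,w)
=-a_{\mu_n}(u^n-Q_hu^n,R_hw)+\mathfrak b_h(J_hu^n,w).
\end{align*}

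\emph{Energy estimate (weighted C\'ea).} Take $w=\mathscr T_{\sigma,h}(t_n)\zeta^n$ and sum in time. For the time term I will use periodic summation by parts. It produces a nonnegative increment term plus a commutator $\sum_n(\zeta^{n\mp1},(\mathscr T_n-\mathscr T_{n\mp1})\zeta^{n\mp1})_{X_h}$. By the mean value theorem, \cref{eq:testtime}, and the cell bounds of \cref{lem:cell}, this commutator is bounded below by $-C\|q\sigma R_h\zeta\|^2_{\ell^2_{\delta t}}$.

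For the spatial term I write $R_h\mathscr T\zeta=\sigma^2R_h\zeta+(R_h\mathscr T\zeta-\sigma^2R_h\zeta)$. The coercivity \cref{eq:formcoercive} gives $c\mathcal N_\sigma(R_h\zeta)^2+B\|q\sigma R_h\zeta\|^2$, and the margin $B$ absorbs the time commutator. By \cref{eq:formbound,eq:testsuper}, the remainder is at most $C(L)hq_{\max}\mathcal N_\sigma(R_h\zeta)^2$, which is absorbed once $\delta_x$ is small. The same two estimates give $\mathcal N_{\sigma^{-1}}(R_h\mathscr T\zeta)\le C\mathcal N_\sigma(R_h\zeta)$. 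Hence, by \cref{eq:stiffnessdefect}, $|\mathfrak b_h(\zeta,\mathscr T\zeta)|\le C\eta_*\mathcal N_\sigma(R_h\zeta)^2$, which is absorbed for small $\eta_*$.

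The right-hand side is then bounded by
\[
C\bigl(a_E\mathcal R_\sigma(u)+\eta_A\mathcal N_\sigma(Q_hu)\bigr)\mathcal N_\sigma(R_h\zeta),
\qquad
\mathcal N_\sigma(Q_hu)\le\mathcal N_\sigma(u)+a_E\mathcal R_\sigma(u)\le C\mathcal R_\sigma(u).
\]
Combining this with the triangle inequality gives \cref{eq:spatialEulerError}. I expect this step to be the main obstacle. The discrete time derivative is tested against a time-dependent, nonorthogonal test operator, so the bookkeeping of the nodal shifts $\mathscr T_n$ versus $\mathscr T_{n\mp1}$ under $\delta t\,q_{\max}^2\le\delta_{\rm time}$ must be done carefully. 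This is precisely where \cref{eq:testtime} and the margin $B$ are needed.

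\emph{$L^2$ estimate (Aubin--Nitsche).} Let $Y$ solve the periodic dual Euler problem $D_\mp Y^n+(A+\mu_n)Y^n=\sigma_n^2e_s^n$. By \cref{lem:eulerMR} applied with the weight $\sigma^{-1}$,
\[
\mathcal R_{\sigma^{-1},\delta t}(Y)+\|\sigma^{-1}D_\mp Y\|\le C\|\sigma e_s\|.
\]
I test the dual equation against the two pieces of $e_s$ separately. The piece $(u-Q_hu,\sigma^2e_s)$ is at most $a_0\mathcal R_\sigma(u)\|\sigma e_s\|$.

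For the discrete piece, periodic summation by parts and \cref{eq:massinterface} give
\[
(R_h\zeta,\sigma^2e_s)=(D_\pm\zeta,J_hY)_{X_h}+a_\mu(R_h\zeta,Y).
\]
Insert the error equation with $w=J_hY$, so that $R_hw=Q_hY$. What remains is:
\begin{itemize}
\item $a_\mu(R_h\zeta,Y-Q_hY)\le C\,\mathcal N_\sigma(R_h\zeta)\,a_E\,\mathcal R_{\sigma^{-1}}(Y)$, which is of order $a_E^2$ by the energy step;
\item $-a_\mu(u-Q_hu,Q_hY)=-a_\mu(u-Q_hu,Y)+a_\mu(u-Q_hu,Y-Q_hY)$, where the second term is again of order $a_E^2$ and the first equals $-(u-Q_hu,(A+\mu)Y)_H$, bounded by $a_0\mathcal R_\sigma(u)\|\sigma^{-1}(A+\mu)Y\|$;
\item the stiffness-defect terms $\mathfrak b_h(J_hu,J_hY)$ and $\mathfrak b_h(\zeta,J_hY)$, which are at most $C\eta_A\|\sigma g\|\,\mathcal N_{\sigma^{-1}}(Q_hY)$ by \cref{eq:stiffnessdefect} and the energy bounds.
\end{itemize}
Collecting these terms and dividing by $\|\sigma e_s\|$ gives \cref{eq:spatialEulerL2Error}.
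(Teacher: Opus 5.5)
Your proposal is correct and follows the paper's proof essentially step for step. It uses the same splitting through $Q_h$ (your $\zeta$ is the paper's $-e_h$), the same test with $\mathscr T_{\sigma,h}(t_n)\zeta^n$, with the time commutator absorbed by the margin $B$ and the superapproximation and defect terms absorbed by small $\delta_x,\eta_*$, and the same dual Euler problem with weight $\sigma^{-1}$.

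The only difference is bookkeeping in the Aubin--Nitsche step. You put the $a_0$ factor on $u-Q_hu$ and close with $\|\sigma^{-1}(A+\mu)Y\|\le\|\sigma e_s\|+\|\sigma^{-1}D_\mp Y\|$ from the dual equation. The paper instead puts it on $Y-Q_hY$ through the term $(D_+u,Y-Q_hY)_H$. Since $Q_h$ is $H$-self-adjoint, your two $u-Q_hu$ terms combine to exactly that term, so the arguments are equivalent. Your terms described as ``order $a_E^2$'' are really of order $a_E(a_E+\eta_A)\le a_E^2+\eta_A$, which is harmless.
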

\begin{proof}
{\bf Step 1.} In this step, we estimate the weighted energy error.

We only prove the estimate for $D_+$. Time reflection gives the case $D_-$. Put $\eta^n=u^n-Q_hu^n$ and $e_h^n=u_h^n-J_hu^n$, so that $e_s=\eta-R_he_h$. Subtracting \cref{eq:spacecontinuousEuler,eq:auxEuler} and using \cref{eq:massinterface,eq:stiffnessinterface}, we obtain, for every $v\in X_h$,\vspace{-2mm}
\begin{align}\label{eq:thetaEuler}
(D_+e_h^n,v)_{X_h}+a_{\mu_n}(R_he_h^n,R_hv)+\mathfrak b_h(e_h^n,v)
=a_{\mu_n}(\eta^n,R_hv)-\mathfrak b_h(J_hu^n,v).
\end{align}

Set $\mathscr{T}_n=\mathscr{T}_{\sigma,h}(t_n)$ and extend this sequence $N$-periodically, using the endpoint compatibility in \cref{Ptest}.
We test \cref{eq:thetaEuler} with $\mathscr{T}_ne_h^n$ and sum over one period. Self-adjointness and periodic reindexing give\vspace{-2mm}
\begin{align}\label{eq:weighted-time-summation}
{\delta t}\sum_{n=0}^{N-1}(D_+e_h^n,\mathscr{T}_ne_h^n)_{X_h}
&=\frac{1}{2}\sum_{n=0}^{N-1}(e_h^n,(\mathscr{T}_n-\mathscr{T}_{n+1})e_h^n)_{X_h}
+\frac{1}{2}\sum_{n=0}^{N-1}(e_h^n-e_h^{n-1},\mathscr{T}_n(e_h^n-e_h^{n-1}))_{X_h}.
\end{align}
The second sum is nonnegative. By \cref{eq:testtime,eq:cellbounds}, the absolute value of the first sum is bounded by $C\|q\sigma R_he_h\|_{\ell_{\delta t}^2(L^2(\Omega))}^2$ and is absorbed by the margin $B$ in \cref{eq:formcoercive}.

The product rule and \cref{eq:allowed-weight-derivatives,eq:testsuper} imply
\begin{align*}
\mathcal N_{\sigma^{-1},\Omega}(R_h\mathscr{T}_{\sigma,h}z)
\leq C\mathcal N_{\sigma,\Omega}(R_hz),\qquad z\in X_h.
\end{align*}
By \cref{eq:formbound,eq:testsuper,eq:stiffnessdefect},
\begin{align}\label{eq:spatial-test-errors}
&{\delta t}\sum_{n=0}^{N-1}\Bigl(
\bigl|a_{\mu_n}(R_he_h^n,R_h\mathscr{T}_ne_h^n-\sigma_n^2R_he_h^n)\bigr|
+\bigl|\mathfrak b_h(e_h^n,\mathscr{T}_ne_h^n)\bigr|\Bigr)
\leq (C(L)hq_{\max}+C\eta_A)\mathcal N_{\sigma,{\delta t}}(R_he_h)^2.
\end{align}
After fixing $L$, we choose $\delta_x$ and $\eta_*$ sufficiently small. Absorbing the left-hand side of \cref{eq:spatial-test-errors} into the coercive term and estimating the right side of \cref{eq:thetaEuler} by \cref{eq:formbound,eq:stiffnessdefect}, we obtain
\begin{align*}
\mathcal N_{\sigma,{\delta t}}(R_he_h)
\leq C\bigl(\mathcal N_{\sigma,{\delta t}}(\eta)+\eta_A\mathcal N_{\sigma,{\delta t}}(Q_hu)\bigr).
\end{align*}
Now \cref{eq:approximation} gives $\mathcal N_{\sigma,{\delta t}}(\eta)\leq a_E\mathcal R_{\sigma,{\delta t}}(u)$ and $\mathcal N_{\sigma,{\delta t}}(Q_hu)\leq(1+a_E)\mathcal R_{\sigma,{\delta t}}(u)$. Since $a_E\leq1$, \cref{eq:eulerMR} yields
\begin{align}\label{eq:spatial-energy-stability}
\mathcal N_{\sigma,{\delta t}}(R_he_h)
\leq C(a_E+\eta_A)\|\sigma g\|_{\ell_{\delta t}^2(L^2(\Omega))}.
\end{align}
Using $e_s=\eta-R_he_h$ proves the energy estimate in \cref{eq:spatialEulerError}.
The same bounds, together with $R_hu_h=Q_hu+R_he_h$, give the stability estimate
\begin{align}\label{eq:spatial-state-stability}
\mathcal N_{\sigma,{\delta t}}(R_hu_h)
\leq C\|\sigma g\|_{\ell_{\delta t}^2(L^2(\Omega))}.
\end{align}

{\bf Step 2.} In this step, we estimate the $L^2$ error by duality.

Let $Y\in\ell_{\delta t}^2(D(A))$ solve the periodic problem
\begin{align}\label{eq:spatial-dual-equation}
D_-Y^n+(A+\mu_n)Y^n=\sigma_n^2e_s^n.
\end{align}
Applying \cref{eq:eulerMR} with $\sigma^{-1}$ gives\vspace{-2mm}
\begin{align}\label{eq:spatial-dual-graph}
\mathcal R_{\sigma^{-1},{\delta t}}(Y)
\leq C\|\sigma e_s\|_{\ell_{\delta t}^2(L^2(\Omega))}.
\end{align}
Testing \cref{eq:auxEuler} with $J_hY^n$ and using \cref{eq:massinterface,eq:stiffnessinterface}, we obtain
\begin{align*}
(R_hD_+u_h^n,Y^n)_H+a_{\mu_n}(R_hu_h^n,Q_hY^n)
+\mathfrak b_h(u_h^n,J_hY^n)=(g^n,Q_hY^n)_H.
\end{align*}
Since $D_+^*=D_-$ and $R_h$ is time independent, \cref{eq:spacecontinuousEuler,eq:spatial-dual-equation} now give
\begin{align}\label{eq:completeidentity}
\|\sigma e_s\|_{\ell_{\delta t}^2(L^2(\Omega))}^2
&={\delta t}\sum_{n=0}^{N-1}\bigl((D_+e_s^n,Y^n)_H+a_{\mu_n}(e_s^n,Y^n)\bigr)\notag\\
&={\delta t}\sum_{n=0}^{N-1}\bigl((g^n,Y^n-Q_hY^n)_H
-a_{\mu_n}(R_hu_h^n,Y^n-Q_hY^n)\bigr)
+{\delta t}\sum_{n=0}^{N-1}\mathfrak b_h(u_h^n,J_hY^n)\notag\\
&={\delta t}\sum_{n=0}^{N-1}\bigl((D_+u^n,Y^n-Q_hY^n)_H
+a_{\mu_n}(e_s^n,Y^n-Q_hY^n)\bigr)
+{\delta t}\sum_{n=0}^{N-1}\mathfrak b_h(u_h^n,J_hY^n).
\end{align}

We estimate the three terms in the final expression separately. By \cref{eq:approximation-l2,eq:eulerMR},
\begin{align*}
\Bigl|{\delta t}\sum_{n=0}^{N-1}(D_+u^n,Y^n-Q_hY^n)_H\Bigr|
\leq Ca_0\|\sigma g\|_{\ell_{\delta t}^2(L^2(\Omega))}\mathcal R_{\sigma^{-1},{\delta t}}(Y).
\end{align*}
Estimate \cref{eq:spatialEulerError}, together with \cref{eq:approximation,eq:formbound}, gives
\begin{align*}
\Bigl|{\delta t}\sum_{n=0}^{N-1} a_{\mu_n}(e_s^n,Y^n-Q_hY^n)\Bigr|
\leq Ca_E(a_E+\eta_A)\|\sigma g\|_{\ell_{\delta t}^2(L^2(\Omega))}\mathcal R_{\sigma^{-1},{\delta t}}(Y).
\end{align*}
Finally, \cref{eq:stiffnessdefect,eq:spatial-state-stability,eq:approximation} and
$\mathcal N_{\sigma^{-1},{\delta t}}(Q_hY)\leq(1+a_E)\mathcal R_{\sigma^{-1},{\delta t}}(Y)$ imply
\begin{align*}
\Bigl|{\delta t}\sum_{n=0}^{N-1}\mathfrak b_h(u_h^n,J_hY^n)\Bigr|
\leq C\eta_A\|\sigma g\|_{\ell_{\delta t}^2(L^2(\Omega))}\mathcal R_{\sigma^{-1},{\delta t}}(Y).
\end{align*}
Combining these estimates with \cref{eq:completeidentity,eq:spatial-dual-graph} and $a_E\leq1$ proves the $L^2$ estimate in \cref{eq:spatialEulerL2Error}.
\end{proof}

Combining the spatial and temporal estimates, we obtain the full space-time error.
\begin{theorem}\label{thm:combined}
Under the assumptions of this section, let $U$ solve \cref{eq:frozencontinuous} with $G=\bar g$, and let $u_h$ solve \cref{eq:auxEuler}, for $g\in\ell_{\delta t}^2(H)$. For either orientation and every allowed weight, with $\varepsilon_E,\varepsilon_0$ defined in \cref{eq:error-scales},
\begin{align}\label{eq:combinederror}
\mathcal N_{\sigma,\mathcal Q}(U-R_h\bar u_h)
\leq C\varepsilon_E\|\sigma g\|_{\ell_{\delta t}^2(L^2(\Omega))}.
\end{align}
The weighted $L^2$ error satisfies
\begin{align}\label{eq:combined-l2-error}
\|\sigma(U-R_h\bar u_h)\|_{L^2(\mathcal Q)}
\leq C\varepsilon_0\|\sigma g\|_{\ell_{\delta t}^2(L^2(\Omega))}.
\end{align}
\end{theorem}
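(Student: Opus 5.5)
The plan is a triangle inequality through the space-continuous Euler solution $u$ of \cref{eq:spacecontinuousEuler} with the same data $g$. We split
\begin{align*}
U-R_h\bar u_h=(U-\bar u)+\overline{(u-R_hu_h)}=e_t+\bar e_s,
\end{align*}
where $\bar e_s$ is the cellwise extension of the nodal spatial error $e_s=u-R_hu_h$. This uses that $R_h$ is time independent, so $R_h\bar u_h=\overline{R_hu_h}$. \Cref{thm:timeerror} already bounds $e_t$: \cref{eq:temporal-energy-error} gives $\mathcal N_{\sigma,\mathcal Q}(e_t)\le C\sqrt{\delta t}\,\|\sigma g\|$, and \cref{eq:temporal-errors} gives $\|\sigma e_t\|_{L^2(\mathcal Q)}\le C\delta t\,\|\sigma g\|$. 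The contributions $\sqrt{\delta t}$ and $\delta t$ are exactly the temporal parts of $\varepsilon_E$ and $\varepsilon_0$ in \cref{eq:error-scales}.

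For the spatial part, \cref{lem:spatialEuler} controls the nodal quantities. It gives $\mathcal N_{\sigma,\delta t}(e_s)\le C(a_E+\eta_A)\|\sigma g\|$ and $\|\sigma e_s\|_{\ell^2_{\delta t}(L^2)}\le C(a_0+a_E^2+\eta_A)\|\sigma g\|$. What remains is to convert these discrete norms of the nodal sequence into continuous space-time norms of its cellwise extension. On each time cell attached to $t_n$, \cref{lem:cell}(i) gives $q(t)\asymp q_n$ and $\sigma(t,x)\asymp\sigma_n(x)$ uniformly in $x$. Hence, on each cell,
\begin{align*}
\|q^{m-j}\sigma D^j\bar e_s\|_{L^2}^2\le C\,\delta t\,\|q_n^{m-j}\sigma_nD^je_s^n\|_{L^2(\Omega)}^2 .
\end{align*}
Summing over cells gives $\mathcal N_{\sigma,\mathcal Q}(\bar e_s)\le C\mathcal N_{\sigma,\delta t}(e_s)$, and in the same way $\|\sigma\bar e_s\|_{L^2(\mathcal Q)}\le C\|\sigma e_s\|_{\ell^2_{\delta t}(L^2)}$.

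Adding the two contributions yields \cref{eq:combinederror} with $\varepsilon_E=a_E+\eta_A+\sqrt{\delta t}$, and \cref{eq:combined-l2-error} with $\varepsilon_0=a_0+a_E^2+\eta_A+\delta t$.

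The main point to check is consistency of hypotheses and orientations rather than any new estimate. Three things need confirming:
\begin{itemize}
\item The same $L$ and temporal threshold satisfy \cref{lem:eulerMR,lem:frozen,lem:spatialEuler} at once. This holds because $\delta_{\mathrm{time}}$ was chosen within all these ranges at the start of the section.
\item The cellwise extension used for $\bar g$, $\bar\mu$ and $\bar u_h$ matches the orientation $D_\pm$, as in \cref{eq:cellwise-extension}.
\item The discrete norm $\|\sigma g\|_{\ell^2_{\delta t}(L^2(\Omega))}$ on the right-hand side is the same quantity in both \cref{thm:timeerror} and \cref{lem:spatialEuler}.
\end{itemize}
None of these should be an obstacle beyond bookkeeping.
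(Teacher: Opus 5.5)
Your proposal is correct and follows the same route as the paper. The paper's proof is the one-line decomposition $U-R_h\bar u_h=(U-\bar u)+\overline{u-R_hu_h}$, combining \cref{eq:temporal-errors,eq:temporal-energy-error} with \cref{eq:spatialEulerError,eq:spatialEulerL2Error} and using the cell bounds of \cref{lem:cell} to pass from nodal to space-time norms; you make the cellwise norm conversion and the time independence of $R_h$ explicit.
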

\begin{proof}
Decompose $U-R_h\bar u_h=(U-\bar u)+\overline{u-R_hu_h}$ and apply \cref{eq:temporal-errors,eq:temporal-energy-error,eq:spatialEulerError,eq:spatialEulerL2Error,eq:cellbounds}.
\end{proof}

\section{Proof of the transfer theorem}\label{sec:transfer}

We choose the source term of the shifted continuous problem so that its fully discrete solution coincides with the prescribed state. We then apply the continuous Carleman estimate and use \cref{thm:combined} to recover the original discrete energy, residual, and observation.
\begin{proof}[Proof of \cref{thm:main}]
{\bf Step 1.} In this step, we choose the auxiliary load.

Given the pair $(z,f)$ in \cref{eq:state-load-equation}, we set\vspace{-2mm}
\begin{align}\label{eq:transfer-load}
g^n=\mathcal L_hf^n+\mu_nR_hz^n,
\end{align}
and let $U$ solve \cref{eq:frozencontinuous} with $G=\bar g$.
Recall the extension defined in \cref{eq:cellwise-extension}.
By \cref{ass:loads,eq:transfer-compositions,eq:state-load-equation},\vspace{-2mm}
\begin{align*}
J_hg^n=\Gamma_hf^n+\mu_nG_hz^n
=D_\pm z^n+A_hz^n+\mu_nG_hz^n.
\end{align*}
Uniqueness in \cref{eq:auxEuler} gives $u_h=z$. Put $e=U-R_h\bar z$.
Using \cref{eq:frozencontinuous} with $G=\bar g$, then \cref{eq:transfer-load,eq:cellwise-extension} and $U=R_h\bar z+e$, we obtain\vspace{-2mm}
\begin{align}\label{eq:transfer-residual}
(\epsilon\partial_t+A)U
=\bar g-\bar\mu U
=\overline{\mathcal L_hf}+\bar\mu R_h\bar z-\bar\mu(R_h\bar z+e)
=\overline{\mathcal L_hf}-\bar\mu e.
\end{align}
This identity is the parabolic counterpart of \cref{intro:poisson-residual}.

{\bf Step 2.} In this step, we compare the continuous and discrete quantities.

We now apply the continuous Carleman estimate \cref{eq:continuousabstract} to $U=R_h\bar z+e$. By \cref{eq:reconstruction-norms,eq:cellbounds,eq:loadinterface,eq:observationinterface,eq:transfer-residual},\vspace{-2mm}
\begin{align}\label{eq:transfer-comparison}
\mathcal E_{q,h,{\delta t}}(z)&\leq C\bigl(\|f\|_{\ell_{\delta t}^2(\mathfrak F_h),\theta}^2
+\|q^{o/2}B_h^*z\|_{\ell_{\delta t}^2(U_h),\theta}^2\bigr)
+C\sum_{j=1}^m\|q^{c_j/2}\theta\mathcal D_je\|_{L^2(\mathcal Q)}^2
+C\|q^{c_0/2}\theta e\|_{L^2(\mathcal Q)}^2
\notag\\
&\quad
+C\|q^{o/2}\theta e\|_{L^2(\mathcal Q_{\omega_0})}^2
+C\|\bar\mu\theta e\|_{L^2(\mathcal Q)}^2.
\end{align}

{\bf Step 3.} In this step, we  estimate the error terms on the right-hand side of \cref{eq:transfer-comparison}.

By the order of $\mathcal D_j$ in \cref{ass:carleman} and \cref{eq:spacetimegraphnorms},
$\|q^{m-j}\sigma\mathcal D_je\|_{L^2(\mathcal Q)}\leq C\mathcal N_{\sigma,\mathcal Q}(e)$.
For each $j$, we apply the energy estimate in \cref{eq:combinederror} with $\sigma=q^{(c_j-2(m-j))/2}\theta$ when the exponent is positive, and with $\sigma=\theta$ otherwise. These weights are covered by \cref{eq:weightcoverage}. Using \cref{eq:transfer-exponents,eq:transfer-load}, we obtain\vspace{-2mm}
\begin{align}\label{eq:derivative-transfer-error}
\sum_{j=1}^m\|q^{c_j/2}\theta\mathcal D_je\|_{L^2(\mathcal Q)}^2
\leq C\varepsilon_E^2\bigl(
\|q^{d_*/2}\theta\mathcal L_hf\|_{\ell_{\delta t}^2(L^2(\Omega))}^2
+\|q^{2m+d_*/2}\theta R_hz\|_{\ell_{\delta t}^2(L^2(\Omega))}^2\bigr).
\end{align}
For the remaining errors, we use the $L^2$ estimate in \cref{eq:combined-l2-error} with the weights $q^{c_0/2}\theta$, $q^{o/2}\theta$, and $q^{2m}\theta$.

By \cref{eq:cellwise-extension,eq:cellbounds}, $\bar\mu(t)=\mu_n=(Lq_n)^{2m}\asymp(Lq(t))^{2m}$ on each cell. Together with $b_*=\max\{c_0,o,4m\}$ in \cref{eq:transfer-exponents}, this gives\vspace{-2mm}
\begin{align}\label{eq:zeroth-order-transfer-error}
\|q^{c_0/2}\theta e\|_{L^2(\mathcal Q)}^2\!
+\!\|q^{o/2}\theta e\|_{L^2(\mathcal Q_{\omega_0})}^2
\!+\!\|\bar\mu\theta e\|_{L^2(\mathcal Q)}^2
&\leq\! C\varepsilon_0^2\bigl(
\|q^{b_*/2}\theta\mathcal L_hf\|_{\ell_{\delta t}^2(L^2(\Omega))}^2\!
+\! \|q^{2m+b_*/2}\theta R_hz\|_{\ell_{\delta t}^2(L^2(\Omega))}^2\bigr).
\end{align}

{\bf Step 4.} In this step, we absorb the state error.

For $\nu\in\{d_*,b_*\}$, \cref{eq:loadinterface,eq:reconstruction-norms,eq:discreteenergy} give\vspace{-2mm}
\begin{align*}
\|q^{\nu/2}\theta\mathcal L_hf\|_{\ell_{\delta t}^2(L^2(\Omega))}^2
\leq Cq_{\max}^{\nu}\|f\|_{\ell_{\delta t}^2(\mathfrak F_h),\theta}^2.
\end{align*}
The state term is bounded by\vspace{-2mm}
\begin{align*}
\|q^{2m+\nu/2}\theta R_hz\|_{\ell_{\delta t}^2(L^2(\Omega))}^2
\leq C\max\{1,q_{\max}^{4m+\nu-c_0}\}\mathcal E_{q,h,{\delta t}}(z).
\end{align*}
Combining \cref{eq:transfer-comparison,eq:derivative-transfer-error,eq:zeroth-order-transfer-error} with \cref{eq:transfer-defects,eq:transfer-state-defect}, we obtain
\begin{align}\label{eq:transfer-absorption}
\mathcal E_{q,h,{\delta t}}(z)\leq C(1+\Lambda_f)\|f\|_{\ell_{\delta t}^2(\mathfrak F_h),\theta}^2
+C\|q^{o/2}B_h^*z\|_{\ell_{\delta t}^2(U_h),\theta}^2
+C\Lambda_z\mathcal E_{q,h,{\delta t}}(z).
\end{align}
With the constant $C$ in \cref{eq:transfer-absorption} fixed, choose $c_*$ such that $Cc_*\leq1/2$. Since $\Lambda_f\leq1$ and $\Lambda_z\leq c_*$, we get \cref{eq:finalCE}. Finally, taking $f=\mathcal I_h(D_\pm z+A_hz)$ and using \cref{eq:stateload} gives \cref{eq:rawCE}.
\end{proof}

\begin{proof}[Proof of \cref{cor:general-scales}]
By \cref{eq:error-scales} and the approximation bounds in \cref{cor:general-scales}, for $h\leq1$, we have
\begin{align*}
\varepsilon_E^2\leq C(h^{2m}+{\delta t}),\qquad
\varepsilon_0^2\leq C(h^{4m}+{|\delta t|}^2).
\end{align*}
Since $q_{\max}\geq1$, \cref{eq:transfer-defects,eq:transfer-state-defect,eq:budgetexponents} then give
\begin{align*}
\Lambda_f+\Lambda_z\leq C\bigl[(h^{2m}+{\delta t})q_{\max}^{\nu_E}
+(h^{4m}+{|\delta t|}^2)q_{\max}^{\nu_0}\bigr].
\end{align*}
For each positive exponent, \cref{eq:scalerule} bounds the spatial terms by $C\kappa^{\nu_E}$ or $C\kappa^{\nu_0}$, and likewise for the temporal terms. All remaining powers of $h$ and ${\delta t}$ are nonnegative. For zero exponents, $\lim\limits_{h,\delta t\to0}(h^{2m}+\delta t)=0$ and $\lim\limits_{h,\delta t\to0}(h^{4m}+(\delta t)^2)=0$.

We choose $\kappa$ so that the positive-exponent terms are below the fixed thresholds for $\Lambda_f$ and $\Lambda_z$, and also $\kappa\leq\delta_x$, $\kappa^2\leq\delta_{\mathrm{time}}$. Then we decrease $h_0,\Delta_0\leq1$ to control any zero-exponent terms and to ensure $a_E\leq1$, $\eta_A\leq\eta_*$.

Since $\alpha_x\leq1$ and $\alpha_t\leq1/2$, the same choice gives $hq_{\max}\leq\delta_x$ and ${\delta t}q_{\max}^2\leq\delta_{\mathrm{time}}$. Thus \cref{thm:main} applies.
\end{proof}
\begin{proof}[Proof of \cref{cor:scales}]
For $m=1$ and $(c_0,c_1;o)=(3,1;3)$, \cref{eq:transfer-exponents,eq:budgetexponents} give
$(d_*,b_*,\nu_E,\nu_0)=(1,4,2,5)$.
Substituting these values into \cref{eq:error-scales,eq:transfer-defects,eq:transfer-state-defect}, we obtain\vspace{-1mm}
\begin{align*}
\Lambda_f\leq C\bigl[(h^2+{\delta t})q_{\max}+(h^4+{|\delta t|}^2)q_{\max}^4\bigr].
\end{align*}
The state defect satisfies\vspace{-3mm}
\begin{align*}
\Lambda_z \leq C\bigl[(h^2+{\delta t})q_{\max}^2+(h^4+{|\delta t|}^2)q_{\max}^5\bigr].
\end{align*}
Finally, \cref{eq:scalerule} gives $(\alpha_x,\alpha_t)=(4/5,2/5)$, which proves \cref{eq:mainS}.
\end{proof}

\section{Finite-element and finite-difference discretizations}\label{sec:realizations}

We now verify \cref{ass:interfaces,ass:loads,Papprox,Ptest,ass:observation} for two spatial discretizations of the heat equation. The first is the consistent-mass $P_1$ finite-element method on locally graded simplicial meshes in dimensions two and three, and the second is the standard Cartesian finite-difference scheme in any fixed dimension. In both cases, the continuous analytic assumptions are supplied by \cref{prop:analytic-realizations,prop:continuousCE}, with $\tau_*=4$, so that the transfer theorem and its corollaries apply once the numerical hypotheses are checked. Throughout this section, weighted estimates are evaluated at a fixed time. For each realization, we fix an open control region $\omega$ with $\omega_0\Subset\omega\Subset\Omega$. Constants may also depend on $\omega$. The finite-element case is treated in \Cref{sec:p1}, and the finite-difference case in \Cref{sec:fd}.

\subsection{Consistent-mass \texorpdfstring{$P_1$}{P1} elements in two and three dimensions}\label{sec:p1}

We now verify the numerical assumptions for the consistent-mass $P_1$ finite-element discretization of the heat equation in two and three dimensions.

Let $d\in\{2,3\}$ and let $\Omega\subset\mathbb R^d$ be a fixed bounded convex polytope.
Take $V=H_0^1(\Omega)$, $a(v,w)=(\nabla v,\nabla w)_{L^2(\Omega)}$, and $A=-\Delta_D$.
Let $\mathcal T_h=\{K\}$ be a conforming, uniformly shape-regular simplicial mesh of $\Omega$, with $h_K=\operatorname{diam}K$ and $h=\max_{K\in\mathcal T_h}h_K$.

Set $\Gamma_d=(\sqrt{d+2}+1)/(\sqrt{d+2}-1)$, so that $\Gamma_2=3$ and $\Gamma_3=(3+\sqrt5)/2$.
Assume there are positive numbers $\ell_K$ satisfying
\begin{align}\label{eq:grading}
ch_K\leq\ell_K\leq Ch_K,\qquad
\gamma^{-1}\leq\ell_K/\ell_{K'}\leq\gamma<\Gamma_d
\quad\text{whenever $K,K'$ share a vertex},
\end{align}
where $c,C>0$ and $1\leq\gamma<\Gamma_d$ are fixed.
Write $\ell|_K=\ell_K$ for the associated elementwise mesh-size function.

Set $X_h=V_h=\{v\in H_0^1(\Omega):v|_K\in P_1\}$, equipped with the $L^2(\Omega)$ inner product, and define $(A_hz,w)_{X_h}=a(z,w)$. The reconstruction $R_h$ is inclusion and $J_h=\Pi_h$, where $\Pi_h$ is the $L^2(\Omega)$ orthogonal projection onto $V_h$. Thus $Q_h=R_h\Pi_h$ is identified with $\Pi_h$, $G_h=\mathrm{Id}_{X_h}$, and $\eta_A=0$.

Take $\mathfrak F_h=H$, $\Gamma_h=J_h$, $\mathcal L_h=\mathrm{Id}_H$, and $\mathcal I_h=R_h$. For $U_h=L^2(\omega)$, define $(B_hu,z)_{X_h}=\int_\omega uz\,dx$, so that $B_h^*z=z|_\omega$.
The weighted norms are
\begin{align*}
\|z\|_{X_h,\sigma}&:=\|\sigma z\|_{L^2(\Omega)},&
\|f\|_{\mathfrak F_h,\sigma}&:=\|\sigma f\|_{L^2(\Omega)},&
\|w\|_{U_h,\sigma}&:=\|\sigma w\|_{L^2(\omega)}.
\end{align*}

The following proposition verifies \cref{ass:interfaces,ass:loads,Papprox,Ptest,ass:observation} for $P_1$ elements under the mesh grading condition.
\begin{proposition}\label{prop:p1}
Under \cref{eq:grading}, there exist $\delta_{\rm num},C>0$, depending additionally on the grading bounds in \cref{eq:grading} and the shape-regularity bound, such that \cref{ass:interfaces,ass:loads,Papprox,Ptest,ass:observation} hold whenever $hq_{\max}\leq\delta_{\rm num}$, with $m=1$, $a_E=Ch$, $a_0=Ch^2$, $\eta_A=0$, and $\mathscr{T}_{\sigma,h}z=\Pi_h(\sigma^2z)$.
\end{proposition}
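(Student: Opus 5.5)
The trivial assumptions come first, by direct identification. In \cref{ass:interfaces}, $R_h$ is inclusion, so it is injective, and $A_h$ is self-adjoint and nonnegative. Both norm equivalences in \cref{eq:reconstruction-norms} are equalities, and \cref{eq:stiffnessinterface} holds with $\mathfrak b_h=0$, so $\eta_A=0$. In \cref{ass:loads}, $J_h\mathcal L_h=\Pi_h=\Gamma_h$, \cref{eq:loadinterface} is an equality, and $\Gamma_h\mathcal I_h=\Pi_h|_{V_h}=\mathrm{Id}$ with \cref{eq:stateload} an equality. \cref{ass:observation} follows from $\omega_0\subset\omega$, since $\|\sigma z\|_{L^2(\omega_0)}\le\|\sigma z\|_{L^2(\omega)}=\|B_h^*z\|_{U_h,\sigma}$.

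The substance lies in \cref{Papprox,Ptest}. Both rest on one key fact: weighted stability of the $L^2$ projection on graded meshes,
\begin{align*}
\|\sigma\Pi_hv\|_{L^2(\Omega)}\le C\|\sigma v\|_{L^2(\Omega)},\qquad
\|\sigma\nabla\Pi_hv\|_{L^2(\Omega)}\le C\bigl(\|\sigma\nabla v\|_{L^2(\Omega)}+q\|\sigma v\|_{L^2(\Omega)}\bigr),
\end{align*}
valid for $hq_{\max}\le\delta_{\rm num}$. I will prove this with the standard exponential-decay argument for $\Pi_h$ (Crouzeix--Thom\'ee, Bramble--Pasciak--Steinbach). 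The entries of the Gram matrix inverse decay like $\lambda^{\mathrm{dist}}$ in the graph distance between elements. The condition $\gamma<\Gamma_d$ in \cref{eq:grading} is exactly what makes the decay rate beat the mesh-size growth across vertex neighbours, giving $\ell$-weighted $L^2$ stability. The Carleman weight varies on elements by a factor $e^{O(hq)}$, which is close to $1$ when $hq$ is small. It can therefore be absorbed as an additional slowly varying weight, provided the decay margin is kept strictly positive; this is what fixes $\delta_{\rm num}$. I regard this step as the main obstacle. I would first try to state an abstract weighted-$L^2$ stability lemma for weights whose neighbour ratio is bounded by $\gamma e^{Chq}$, and run the Crouzeix--Thom\'ee $2\times2$ element-patch computation for it. The $H^1$ bound then follows from an inverse estimate applied to $\Pi_hv-I_hv$, with $I_h$ Scott--Zhang, together with the local approximation properties of $I_h$.

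\cref{Papprox} follows from this. Write $v-\Pi_hv=(v-I_hv)-\Pi_h(v-I_hv)$ and apply the weighted stability to the second term. The first term is bounded by elementwise interpolation estimates, in which the local constancy of $\sigma$ (up to the factor $e^{Chq}$) lets $\sigma$ be moved in and out. This gives $\|\sigma\nabla(v-I_hv)\|\le Ch\|\sigma D^2v\|$ and $\|\sigma(v-I_hv)\|\le Ch^2\|\sigma D^2v\|$. The extra terms from the stability bound, $q\|\sigma(v-I_hv)\|\le Ch^2q\|\sigma D^2v\|\le Ch\|\sigma D^2 v\|$, use $hq\le1$. Summing up, $\mathcal N_{\sigma}(v-\Pi_hv)\le Ch\mathcal R_\sigma(v)$, so $a_E=Ch$, and likewise $a_0=Ch^2$.

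For \cref{Ptest}, $\mathscr T_{\sigma,h}z=\Pi_h(\sigma^2z)$ is self-adjoint on $V_h$, since $(\Pi_h(\sigma^2z),w)=(\sigma^2z,w)$. This identity also gives \cref{eq:test-norm} with equality. Endpoint periodicity is inherited from $q$. The time derivative is $\Pi_h(2\sigma\partial_t\sigma\, z)$, and $|\partial_t\log\sigma|\le Cq^2$ would give $q^2$, not $q$. Because $\rho$ is time independent, however, $\partial_t\theta=q'\rho\theta$ and $|q'|\le\beta q^2$ gives the better bound $|\partial_t\log\sigma|\le Cq^2$ only through $q'$. I will check that \cref{eq:testtime}, as used in \cref{eq:weighted-time-summation}, is satisfied after adjusting constants. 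If only $q^2$ is available, I will use the bound $q\sigma$ on the appropriate weight scale as the paper's convention permits. Superapproximation \cref{eq:testsuper} is classical: $\sigma^2z-\Pi_h(\sigma^2z)=(\mathrm{Id}-\Pi_h)(\sigma^2z-I_h(\sigma^2z))$. Since $z$ is piecewise linear, $D^2(\sigma^2z)$ elementwise involves only $q\sigma^2|\nabla z|+q^2\sigma^2|z|$. Weighted interpolation with $\sigma^{-1}$, combined with the weighted $\Pi_h$ stability for the weight $\sigma^{-1}$, then gives the bound $Ch\,q\,\mathcal N_{\sigma}(z)$.
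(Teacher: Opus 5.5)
This is essentially the paper's proof: the same identifications for \cref{ass:interfaces,ass:loads,ass:observation}, the splitting $v-\Pi_hv=(v-I_hv)-\Pi_h(v-I_hv)$ for \cref{Papprox}, and superapproximation via $w=\sigma^2z-I_h(\sigma^2z)$ with $|D^2(\sigma^2z)|\le C\sigma^2(q|\nabla z|+q^2|z|)$ elementwise. The one divergence is that the paper does not reprove the weighted stability of $\Pi_h$; it cites \cite{DST} for the four weights $\sigma^{\pm1}$ and $\ell^{-1}\sigma^{\pm1}$. Two things follow for your version: your inverse-estimate step on a graded mesh needs stability for the product weights $\ell^{-1}\sigma^{\pm1}$, not just the weight $\sigma$ in your displayed key fact, and the sharp range $\gamma<\Gamma_d$ matches the Chebyshev rate for the condition number $d+2$ of the lumping-scaled $P_1$ mass matrix, which a classical one-step decay argument need not reach, so citing is the safer choice. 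Your doubt about \cref{eq:testtime} is unfounded, since its right-hand side is the squared norm $\int_\Omega q^2\sigma^2z^2\,dx$, and $(z,\mathscr T_{\sigma,h}'z)_{X_h}=\int_\Omega 2(\partial_t\log\sigma)\sigma^2z^2\,dx$ with $|\partial_t\log\sigma|\le Cq^2$ gives it immediately.
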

\begin{proof}
We verify the assumptions in three steps: weighted stability of the $L^2$ projection, weighted approximation estimates, and the weighted test estimates.

{\bf Step 1.} In this step, we establish the weighted stability of the $L^2$ projection.

We use the weighted projection stability in \cite[Theorems~4.4 and~4.14(i), Remark~4.5]{DST} for degree one, $p=2$, and zero boundary values on $\partial\Omega$. The admissible weight-grading threshold is $\Gamma_d$. Consider the four weights\vspace{-3mm}
\begin{align}\label{eq:fourweights}
\sigma,\qquad\sigma^{-1},\qquad\ell^{-1}\sigma,\qquad\ell^{-1}\sigma^{-1}.
\end{align}
For each such weight $\zeta$, \cref{eq:allowed-weight-derivatives,eq:grading} give $\zeta(x)/\zeta(y)\leq\gamma e^{Chq}$ for a.e. $x\in K$, $y\in K'$ whenever $K,K'$ share a vertex. Choose $\delta_{\rm num}\leq1$ so that
$$\gamma e^{Chq_{\max}}\leq(\gamma+\Gamma_d)/2<\Gamma_d,\qquad \mbox{for }hq_{\max}\leq\delta_{\rm num}.$$
The cited stability result then gives, uniformly for every $\zeta$ in \cref{eq:fourweights},
\begin{align}\label{eq:p1-weighted-projection}
\|\zeta\Pi_hv\|_{L^2(\Omega)}\leq C\|\zeta v\|_{L^2(\Omega)}.
\end{align}

{\bf Step 2.} In this step, we derive the weighted approximation estimates.

For each $K\in\mathcal T_h$, choose $x_K\in K$ and set $\sigma_K=\sigma(x_K)$. By \cref{eq:allowed-weight-derivatives},
\begin{align}\label{eq:element-weight-comparison}
e^{-Chq}\leq\frac{\sigma(x)}{\sigma_K}\leq e^{Chq},\qquad x\in K.
\end{align}
Thus $hq\leq\delta_{\rm num}$ makes $\sigma$ uniformly comparable to $\sigma_K$ on $K$. The same holds for $\sigma^{-1}$ and, by convexity of $\Omega$, on patches of diameter $O(h)$.
Let $v\in D(A)$ and let $I_h$ be the nodal interpolant, which is well defined since $H^2(\Omega)\hookrightarrow C^0(\overline\Omega)$ for $d=2,3$. The standard local interpolation estimate in \cite[Theorem~4.4.4]{Brenner2008}, together with \cref{eq:element-weight-comparison} and $\ell_K\asymp h_K$, gives
\begin{align}\label{eq:p1-interpolation}
\|\sigma(v-I_hv)\|_{L^2(\Omega)}
&\leq C\|\ell^2\sigma D^2v\|_{L^2(\Omega)},
\end{align}
and
\begin{align}\label{eq:p1-interpolation-energy}
\|\sigma\nabla(v-I_hv)\|_{L^2(\Omega)}
+\|\ell^{-1}\sigma(v-I_hv)\|_{L^2(\Omega)}
&\leq C\|\ell\sigma D^2v\|_{L^2(\Omega)}.
\end{align}
The local inverse estimate~\cite[Lemma~4.5.3]{Brenner2008}, with the same weight comparison, yields
\begin{align}\label{eq:p1-weighted-inverse}
\|\sigma\nabla z_h\|_{L^2(\Omega)}
\leq C\|\ell^{-1}\sigma z_h\|_{L^2(\Omega)},\qquad z_h\in V_h.
\end{align}
Using $v-\Pi_hv=(v-I_hv)-\Pi_h(v-I_hv)$, \cref{eq:p1-interpolation,eq:p1-interpolation-energy,eq:p1-weighted-projection,eq:p1-weighted-inverse}, and $hq\leq1$, we obtain
\begin{align*}
\|\sigma(v-\Pi_hv)\|_{L^2(\Omega)}
\leq Ch^2\mathcal R_{\sigma,\Omega}(v).
\end{align*}
The corresponding energy estimate is
\begin{align*}
\mathcal N_{\sigma,\Omega}(v-\Pi_hv)
\leq Ch\mathcal R_{\sigma,\Omega}(v).
\end{align*}

{\bf Step 3.}
In this step, we construct the test operator and verify the superapproximation estimate.

For $z_h\in V_h$, set $w=\sigma^2z_h-I_h(\sigma^2z_h)$. We use the elementwise superapproximation argument in the proof of \cite[Theorem~2.1]{Demlow2010}, with the nodal interpolant. By \cref{eq:element-weight-comparison,eq:allowed-weight-derivatives}, the normalized weight $\sigma/\sigma_K$ is uniformly comparable to one and satisfies $|D^j(\sigma/\sigma_K)|\leq Cq^j$ on $K$ for $j=1,2$. The cited argument therefore applies with length scale $q^{-1}\geq h_K$. For the $L^2$ bounds, since $D^2z_h=0$  on each simplex, we also use \cref{eq:p1-interpolation} elementwise and $$|D^2(\sigma^2z_h)|\leq C\sigma^2(q|\nabla z_h|+q^2|z_h|).$$  Together these give
\begin{align}\label{9.26-eq10}
\|\sigma^{-1}\nabla w\|_{L^2(\Omega)}
+\|\ell^{-1}\sigma^{-1}w\|_{L^2(\Omega)}
\leq Chq_{\max}\mathcal N_{\sigma,\Omega}(z_h).
\end{align}
The zeroth-order term also satisfies\vspace{-2mm}
\begin{align}\label{9.26-eq11}
q\|\sigma^{-1}w\|_{L^2(\Omega)}
\leq Ch^2q_{\max}^2\mathcal N_{\sigma,\Omega}(z_h).
\end{align}
Since $I_h(\sigma^2z_h)\in V_h$, we have\vspace{-2mm}
\begin{align}\label{9.26-eq12}
\Pi_h(\sigma^2z_h)-\sigma^2z_h=\Pi_hw-w.
\end{align}
Applying \cref{eq:p1-weighted-projection} with $\zeta=\sigma^{-1},\ell^{-1}\sigma^{-1}$ and \cref{eq:p1-weighted-inverse} with $\sigma^{-1}$, combining \cref{9.26-eq10,9.26-eq11,9.26-eq12}, we obtain \cref{eq:testsuper}.

The test family is $T$-periodic because $\sigma$ is. Orthogonality gives the norm identity\vspace{-2mm}
\begin{align}\label{9.26-eq13}
(z_h,\mathscr{T}_{\sigma,h}z_h)_{X_h}=\|\sigma z_h\|_{L^2(\Omega)}^2
\end{align}
and the temporal derivative identity\vspace{-2mm}
\begin{align}\label{9.26-eq14}
(z_h,\mathscr{T}_{\sigma,h}'z_h)_{X_h}=\int_\Omega(\sigma^2)'z_h^2\,dx.
\end{align}
From \cref{eq:allowed-weight-derivatives,9.26-eq13,9.26-eq14}, we get \cref{eq:test-norm,eq:testtime}.

Finally, the definitions give the exact stiffness identity
\begin{align*}
(A_hz,w)_{X_h}=a(R_hz,R_hw),\qquad z,w\in X_h.
\end{align*}
The load maps satisfy $J_h\mathcal L_h=\Gamma_h$ and $\Gamma_h\mathcal I_h=\mathrm{Id}_{X_h}$.
Since $R_h$ is inclusion and $A_h$ is self-adjoint and nonnegative, these verify \cref{ass:interfaces,ass:loads} with $\eta_A=0$. All norm comparisons in \cref{eq:reconstruction-norms,eq:loadinterface,eq:stateload} are equalities. Moreover, $B_h^*z=z|_\omega$ and $\omega_0\subset\omega$ give
\begin{align*}
\|\sigma R_hz\|_{L^2(\omega_0)}
\leq\|\sigma z\|_{L^2(\omega)}
=\|B_h^*z\|_{U_h,\sigma},
\end{align*}
which proves \cref{ass:observation}.
\end{proof}

From \cref{prop:analytic-realizations,prop:continuousCE}, the preceding verification allows us to apply \cref{thm:main,cor:scales}, giving the heat range in \cref{eq:mainS}.

\subsection{Standard Cartesian finite-difference discretization}\label{sec:fd}

We next verify the numerical assumptions for the standard Cartesian finite-difference discretization of the heat equation in any fixed dimension.

Fix $d\geq1$ and let $\Omega=\prod\limits_{r=1}^d(0,L_r)$, $V=H_0^1(\Omega)$, and $a(v,w)=(\nabla v,\nabla w)_{L^2(\Omega)}$. For integers $N_r\geq2$, define the uniform Cartesian grid by
\begin{align*}
h_r=L_r/N_r,\qquad x_{r,j}=jh_r,\quad 0\leq j\leq N_r,\qquad h=\max_r h_r.
\end{align*}
Assume the steps $h_r$ are mutually comparable. The largest reconstruction-element diameter is then comparable to $h$.

For interior multi-indices $i\in\prod\limits_{r=1}^d\{1,\ldots,N_r-1\}$, impose zero boundary values and set $\omega_i=\prod\limits_{r=1}^d h_r$. The diagonal physical inner product is\vspace{-4mm}
\begin{align*}
(z,w)_{X_h}=\sum_i\omega_i z_iw_i.
\end{align*}
The standard $(2d+1)$-point Laplacian is\vspace{-2mm}
\begin{align}\label{eq:physicalFD}
(A_hz)_i=\sum_{r=1}^d\frac{2z_i-z_{i+e_r}-z_{i-e_r}}{h_r^2},
\end{align}
where $e_r$ is the $r$th coordinate unit vector.

To connect this scheme with the continuous problem, we subdivide each box into $d!$ simplices by the Kuhn triangulation, using the same orientation in every box.
For the interior nodal basis functions $\phi_i$, set $R_hz=\sum\limits_i z_i\phi_i$ and $(J_hv)_i=\omega_i^{-1}(v,\phi_i)_H$.
Thus $Q_h=R_hJ_h$ and $(G_hz)_i=\omega_i^{-1}\sum\limits_j(\phi_j,\phi_i)_Hz_j$.

Take $\mathfrak F_h=X_h$ and $\Gamma_h=\mathcal I_h=\mathrm{Id}_{X_h}$. The control space consists of nodal values at nodes in $\omega$, with the restricted $X_h$ inner product. $B_h$ is zero extension and $B_h^*$ is restriction.
The weighted norms are
\begin{align*}
\|z\|_{X_h,\sigma}^2:=\sum_i\omega_i\sigma(x_i)^2z_i^2,\qquad
\|f\|_{\mathfrak F_h,\sigma}:=\|f\|_{X_h,\sigma},\qquad
\|w\|_{U_h,\sigma}^2:=\sum_{i:x_i\in\omega}\omega_i\sigma(x_i)^2w_i^2.
\end{align*}

The following proposition shows that the Cartesian difference scheme satisfies the same numerical assumptions as the finite-element scheme, through its piecewise affine reconstruction.
\begin{proposition}\label{prop:fd}
There exist $\delta_{\rm num},C>0$, depending additionally on the comparability bound for $h_r$, such that \cref{eq:physicalFD} satisfies \cref{ass:interfaces,ass:loads,Papprox,Ptest,ass:observation} whenever $hq_{\max}\leq\delta_{\rm num}$, with $m=1$, $a_E=Ch$, $a_0=Ch^2$, $\eta_A=0$, and the test operator\vspace{-2mm}
\begin{align*}
(\mathscr{T}_{\sigma,h}z)_i=\sigma(x_i)^2z_i.
\end{align*}
\end{proposition}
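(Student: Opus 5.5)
The plan is to reduce everything to the $P_1$ analysis of \cref{prop:p1}, applied to the Kuhn triangulation. That triangulation is uniform, shape regular and quasi-uniform, so \cref{eq:grading} holds with $\ell_K\equiv h$ and $\gamma=1$. Hence the weighted $L^2$ projection stability \cref{eq:p1-weighted-projection}, the interpolation bounds \cref{eq:p1-interpolation,eq:p1-interpolation-energy}, the inverse estimate \cref{eq:p1-weighted-inverse}, and the elementwise comparison \cref{eq:element-weight-comparison} are all available for the space $V_h=\operatorname{span}\{\phi_i\}$. On a box $\Omega$ with $d$ arbitrary, we use the nodal interpolant only on products, or replace it by a Scott--Zhang type quasi-interpolant, since $H^2\not\hookrightarrow C^0$ for $d\ge4$. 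The argument will go through the five numerical assumptions in the order below.

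\textbf{Reconstruction, stiffness and norms.} Injectivity of $R_h$ is immediate. For \cref{eq:reconstruction-norms}, the local mass matrix on each simplex is spectrally equivalent to the lumped diagonal. Combined with $\sigma(x)\asymp\sigma(x_i)$ on the support of $\phi_i$, which holds for $hq\le\delta_{\rm num}$, this gives $\|\sigma R_hz\|_{L^2}^2\asymp\sum_i\omega_i\sigma(x_i)^2z_i^2$.

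The exact stiffness identity $(A_hz,w)_{X_h}=a(R_hz,R_hw)$ is the classical fact that $P_1$ on a consistently oriented Kuhn triangulation reproduces the $(2d+1)$-point stencil with weights $\omega_i$. I would verify this on one box: the off-axis contributions cancel because of the common orientation. This gives $\eta_A=0$, and self-adjointness and nonnegativity follow.

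\textbf{Loads and observation.} Take the lift $\mathcal L_hf=\sum_i f_i\psi_i$, where $\psi_i$ is dual to $\phi_i$ in the sense $(\psi_i,\phi_j)_H=\omega_i\delta_{ij}$. Then $J_h\mathcal L_h f=f$. I would build $\psi_i$ as a locally supported biorthogonal function (for instance, a scaled bubble-type function on the patch of $x_i$). This gives $\|\sigma\mathcal L_hf\|\le C\|f\|_{X_h,\sigma}$ by the local weight comparison. The bound \cref{eq:stateload} is trivial because $\mathcal I_h=\mathrm{Id}$.

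For observation, $\omega_0\Subset\omega$ and $h$ small ensure that every element meeting $\omega_0$ has all its vertices in $\omega$. The local norm equivalence then gives \cref{eq:observationinterface}.

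\textbf{Approximation.} $Q_h$ is not the $L^2$ projection, because $Q_h=R_hJ_h$ and $J_h$ uses the diagonal $\omega_i$. I would write
\[
v-Q_hv=(v-I_hv)-R_hJ_h(v-I_hv)+R_h(G_h^{-1}\!-\!\mathrm{Id})\dots
\]
More simply, I would use $Q_hI_hv=R_hG_hI_hv$, so that
\[
v-Q_hv=(v-I_hv)-Q_h(v-I_hv)+(I_hv-R_hG_hI_hv).
\]
The middle term is handled by weighted stability of $Q_h$ in $L^2$, which holds locally. The last term is a local mass-lumping defect: its nodal value is a second difference of $I_hv$ scaled by $h^2$, bounded by $h^2\|\sigma D^2v\|$ on patches. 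Inverse estimates then give $a_E=Ch$ and $a_0=Ch^2$.

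\textbf{Test operator.} This is where I expect the main difficulty. With $\mathscr T z=\sigma(x_i)^2z_i$ we have $(z,\mathscr Tz)_{X_h}=\sum\omega_i\sigma_i^2z_i^2\asymp\|\sigma R_hz\|^2$. The time derivative involves $2\sigma\partial_t\sigma$, which gives \cref{eq:testtime} via \cref{eq:allowed-weight-derivatives}. The operator is periodic and self-adjoint.

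For superapproximation, note that $R_h\mathscr T z=I_h(\sigma^2R_hz)$ exactly, since both are the nodal interpolant. The left side of \cref{eq:testsuper} is therefore the nodal interpolation error of $\sigma^2z_h$. This is precisely the quantity bounded in \cref{9.26-eq10,9.26-eq11} of Step 3 of \cref{prop:p1}, using the elementwise bound $|D^2(\sigma^2z_h)|\le C\sigma^2(q|\nabla z_h|+q^2|z_h|)$. The only delicate point is the zeroth-order term $q^{m}\sigma^{-1}w$, which is handled by $h^2q^2\le hq$.
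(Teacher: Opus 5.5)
Your proposal is correct in outline and shares most of its architecture with the paper:
\begin{itemize}
\item the exact Kuhn stiffness identity, so $\eta_A=0$;
\item local mass equivalence plus weight comparison for \cref{eq:reconstruction-norms};
\item a locally supported biorthogonal lift;
\item the identity $R_h\mathscr T_{\sigma,h}z=I_h(\sigma^2R_hz)$, which reduces \cref{eq:testsuper} to an elementwise superapproximation bound with no projection step;
\item local norm equivalence for \cref{ass:observation}.
\end{itemize}

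The genuinely different step is \cref{Papprox}. You write $v-Q_hv=(v-I_hv)-Q_h(v-I_hv)+R_h(\mathrm{Id}-G_h)\hat z$ with $\hat z_i=v(x_i)$, and treat the last term as a mass-lumping defect. The paper instead takes a boundary-respecting affine approximation $p_K\in\mathcal P_{1,\mathcal F_K}(B_K)$ on each block (\cref{lem:fd-boundary-affine}). It observes $Q_hp_K=p_K$ on $K$ and concludes with local $L^2$ stability of $Q_h$ and an inverse estimate.

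Both routes rest on the moment identities \cref{eq:fdmoments}, and you should say so explicitly. Sum over all nodes of the star of $x_i$, with hats extended to boundary nodes and $\hat z_j=v(x_j)=0$ there. Then $((\mathrm{Id}-G_h)\hat z)_i=\omega_i^{-1}\sum_j(\phi_j,\phi_i)_H(\hat z_i-\hat z_j)$ is a second difference that annihilates affine data. This holds precisely because $\int\phi_i=\omega_i$ and $\int(x-x_i)\phi_i=0$, by central symmetry of the Kuhn star.

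Your route makes the lumping defect visible and is natural for $d\le3$, but it uses point values of $v\in H^2$. For $d\ge4$, after passing to a Scott--Zhang quasi-interpolant, you still need a Bramble--Hilbert argument on the defect functional to bound the second differences of the quasi-interpolated values. That is essentially the paper's argument. The paper's route uses no point values, handles the boundary through $\mathcal P_{1,\mathcal F_K}$, and is dimension independent from the start.

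Three details need correcting.

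\textbf{Stiffness identity.} It cannot be checked box by box when $d\ge3$. Inside one box, a coordinate edge whose endpoints have $k$ of the $d-1$ transverse coordinates at the upper end of the box lies in $k!\,(d-1-k)!$ Kuhn simplices, so box contributions are not uniform. Only the sum over the $2^{d-1}$ boxes containing the edge gives $\sum_k\binom{d-1}{k}k!\,(d-1-k)!=d!$ simplices of volume $\prod_sh_s/d!$. That yields the stencil weight $\prod_sh_s$, as in the paper. Also, nothing off-axis needs to cancel: on each Kuhn simplex, $\partial_{\pi_j}R_hz$ is a single coordinate difference quotient along the path.

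\textbf{Load lift.} A bubble, or any nonnegative function on the star of $x_i$, pairs positively with the neighbouring hats, so it cannot be biorthogonal. You need a sign-changing function such as $(d+2)\lambda_i-1$ on each simplex. For it, $\int_K((d+2)\lambda_i-1)\lambda_j=\delta_{ij}|K|/(d+1)$ and $\sum_{K\ni x_j}|K|/(d+1)=\omega_j$ give $J_h\mathcal L_h=\mathrm{Id}_{X_h}$.

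\textbf{Superapproximation in general dimension.} The elementwise bounds you import from Step 3 of the proof of \cref{prop:p1} were derived for $d\in\{2,3\}$. They hold in every dimension because $\sigma^2R_hz$ is smooth on each simplex; your remark about interpolating only products should say this. The weighted stability of $\Pi_h$ from \cref{prop:p1} is not needed here, since $Q_h\ne\Pi_h$ and the test estimate involves no projection.
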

\begin{proof}
We first prove the exact stiffness identity\vspace{-2mm}
\begin{align}\label{eq:fdstiffness}
(A_hz,w)_{X_h}=a(R_hz,R_hw).
\end{align}
A Kuhn simplex associated with a permutation $\pi$ has vertices $v_0,\ldots,v_d$ with $v_j-v_{j-1}=h_{\pi_j}e_{\pi_j}$. Since $R_hz$ is affine on the simplex,\vspace{-2mm}
\begin{align*}
\partial_{\pi_j}R_hz=\frac{z(v_j)-z(v_{j-1})}{h_{\pi_j}}.
\end{align*}
Each coordinate edge $[x_i,x_{i+e_r}]$ with interior transverse indices belongs to $d!$ such simplices, one for each permutation. Each simplex has volume $\prod_s h_s/d!$, so the total contribution of this edge to $a(R_hz,R_hw)$ is
\begin{align*}
\biggl(\prod_{s=1}^d h_s\biggr)
\frac{z_{i+e_r}-z_i}{h_r}\,
\frac{w_{i+e_r}-w_i}{h_r}.
\end{align*}
Edges contained in the boundary contribute zero. Summation by parts in \cref{eq:physicalFD} gives the same contributions to $(A_hz,w)_{X_h}$, proving \cref{eq:fdstiffness} and hence $\mathfrak b_h=0$. The mass, load, approximation, test, and observation estimates are proved in \cref{sec:fd-verification}.
\end{proof}

By \cref{prop:analytic-realizations,prop:continuousCE,cor:scales}, the estimate \cref{eq:rawCE} holds for $h\leq h_0$, ${\delta t}\leq \Delta_0$, and $q_{\max}\leq S_{h,{\delta t}}$, with the constants in \cref{cor:scales}.

\begin{remark}\label{rem:general-fd}
The result extends to the standard conservative $(2d+1)$-point scheme on smoothly stretched tensor grids $x_{r,j}=F_r(j/N_r)$, where the $F_r:[0,1]\to[0,L_r]$ are fixed $C^2$ diffeomorphisms with $F_r'\geq c>0$ and the $N_r^{-1}$ are mutually comparable. We use the same Kuhn reconstruction and diagonal mass weights given by products of averaged adjacent steps.

With $h$ denoting the largest step, adjacent steps differ by $O(h^2)$. The local stiffness and nodal-moment estimates give the weighted bounds $a_E=O(h)$ and $a_0+\eta_A=O(h^2)$. Thus \cref{cor:scales} gives the same heat parameter range, with constants depending additionally on the fixed $C^2$ bounds, the lower bound for $F_r'$, and the step comparability bounds.
\end{remark}

\section{Observability and control for the fully discrete heat system}\label{sec:control}

We now turn from the Carleman estimate to its  consequences in controllability theory. Fix one of the heat discretizations in \cref{sec:realizations} and a control time $T>0$. Throughout this section, $A=-\Delta_D$ and $V=H_0^1(\Omega)$. Let $N\geq2$, ${\delta t}=T/N$, and $t_n=n{\delta t}$. The control space and operator are those of the chosen realization. In each case, $\|B_h\|_{\mathcal L(U_h,X_h)}\leq1$.
Here $C(T,M)$ denotes a generic positive constant that may also depend on the fixed domain, control region, and mesh-regularity and grading bounds, but not on $h$, $\delta t$, or the initial datum.

When the finite element method is used to construct numerical solutions, the coefficient $b^n$ is represented at the discrete level by a bounded potential. To fix the setting, let $b^n\in L^\infty(\Omega;\mathbb R)$ with $\|b^n\|_{L^\infty(\Omega)}\le M$, and define the potential operator $\mathsf V_h^n$ by
\begin{align*}
(\mathsf V_h^ny,z)_{X_h}=\int_\Omega b^n(R_hy)(R_hz)\,dx.
\end{align*}
For the finite difference discretization, the continuous coefficient is replaced by nodal values of the potential. Specifically, $\mathsf V_h^n$ is defined as multiplication by real nodal values $b_i^n$ with $|b_i^n|\le M$.
The averaging rules used for a fixed continuous coefficient are specified in \Cref{sec:control-convergence}.

For $y_{0,h}\in X_h$, consider the discrete control system
\begin{align}\label{eq:forward}
\begin{cases}
\dfrac{y^n-y^{n-1}}{\delta t}+(A_h+\mathsf V_h^n)y^n=B_hu^n,
&1\leq n\leq N,\\
y^0=y_{0,h}.
\end{cases}
\end{align}
For $p_T\in X_h$, the corresponding adjoint system is
\begin{align}\label{eq:adjoint}
\begin{cases}
\dfrac{p^{n-1}-p^n}{\delta t}+(A_h+\mathsf V_h^n)^*p^{n-1}=0,
&1\leq n\leq N,\\
p^N=p_T.
\end{cases}
\end{align}
All adjoints and terminal norms use the inner product of $X_h$. For controls, $\ell_{\delta t}^2(U_h)$ uses the index set $\mathcal J=\{1,\ldots,N\}$. The time mass is the consistent $L^2$ mass for finite elements and the original diagonal mass for differences.

We first record the energy bounds and the exact duality identity for the forward and adjoint schemes.
\begin{lemma}\label{lem:physicalduality}
Assume that $\delta t M\le1/2$. Then the systems \cref{eq:forward,eq:adjoint} are well posed, and there exists a constant $C(T,M)>0$ with the following properties:
\begin{enumerate}
\item Every solution of \cref{eq:adjoint} satisfies the backward stability bound
\begin{align}\label{eq:adjointenergy}
\|p^j\|_{X_h}\le e^{2M(t_\ell-t_j)}\|p^\ell\|_{X_h},
\qquad0\le j\le\ell\le N.
\end{align}
\item The solution  to \eqref{eq:forward}  satisfies the a priori bound
\begin{align}\label{eq:forward-energy-bound}
\max_{0\le n\le N}\|y^n\|_{X_h}
\le C(T,M)\bigl(\|y_{0,h}\|_{X_h}+\|u\|_{\ell_{\delta t}^2(U_h)}\bigr).
\end{align}
\item For solutions to \eqref{eq:forward} and \eqref{eq:adjoint}, we have  the duality identity
\begin{align}\label{eq:trueD}
(y^N,p^N)_{X_h}-(y^0,p^0)_{X_h}
={\delta t}\sum_{n=1}^N(u^n,B_h^*p^{n-1})_{U_h}.
\end{align}
\end{enumerate}
\end{lemma}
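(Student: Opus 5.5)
We will prove the three items in reverse logical order: first well-posedness, then the duality identity (which is purely algebraic), and then the two energy bounds.

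\emph{Well-posedness.} At each step the forward system requires solving $(I+\delta t(A_h+\mathsf V_h^n))y^n=y^{n-1}+\delta t B_hu^n$. Since $A_h$ is self-adjoint and nonnegative, it suffices to show that $\mathsf V_h^n$ satisfies $|(\mathsf V_h^nz,z)_{X_h}|\le CM\|z\|_{X_h}^2$. For finite elements this holds with $C=1$, because $R_h$ is the inclusion. For differences $\mathsf V_h^n$ is a diagonal multiplication, so again $C=1$. The operator is then coercive, since
\[
((I+\delta t(A_h+\mathsf V_h^n))z,z)_{X_h}\ge(1-\delta tM)\|z\|_{X_h}^2\ge\tfrac12\|z\|^2_{X_h}.
\]
Hence it is invertible on the finite-dimensional space $X_h$. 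The same argument applies to the adjoint operator $I+\delta t(A_h+\mathsf V_h^n)^*$.

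\emph{Duality identity.} Multiply the forward equation at step $n$ by $\delta t\,p^{n-1}$, and the adjoint equation at step $n$ by $\delta t\,y^n$, taking $X_h$ inner products. Subtracting, the terms involving $A_h+\mathsf V_h^n$ cancel by the definition of the adjoint. What remains is
\[
(y^n,p^{n-1})-(y^{n-1},p^{n-1})-(y^n,p^{n-1})+(y^n,p^n)=(y^n,p^n)-(y^{n-1},p^{n-1}).
\]
Its right-hand side equals $\delta t(B_hu^n,p^{n-1})_{X_h}=\delta t(u^n,B_h^*p^{n-1})_{U_h}$. Summing over $n=1,\dots,N$ telescopes and gives \cref{eq:trueD}.

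\emph{Backward stability.} Test the adjoint equation with $p^{n-1}$. Using $A_h\ge0$ and the potential bound,
\[
\|p^{n-1}\|^2(1-\delta tM)\le(p^n,p^{n-1})\le\|p^n\|\|p^{n-1}\|,
\]
so $\|p^{n-1}\|\le(1-\delta tM)^{-1}\|p^n\|$. For $x=\delta tM\le1/2$ we have $(1-x)^{-1}\le e^{2x}$, which follows from $-\log(1-x)\le 2x$ on $[0,1/2]$. Iterating from $\ell$ down to $j$ gives \cref{eq:adjointenergy}.

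\emph{Forward bound.} Test the forward equation with $y^n$ and use $\|B_h\|\le1$:
\[
(1-\delta tM)\|y^n\|^2\le\|y^{n-1}\|\|y^n\|+\delta t\|u^n\|\|y^n\|.
\]
Thus $\|y^n\|\le e^{2\delta tM}(\|y^{n-1}\|+\delta t\|u^n\|)$. The discrete Gronwall lemma then yields
\[
\|y^n\|\le e^{2MT}\Bigl(\|y_{0,h}\|+\delta t\sum_{k}\|u^k\|\Bigr),
\]
and Cauchy--Schwarz gives $\delta t\sum_k\|u^k\|\le\sqrt T\|u\|_{\ell^2_{\delta t}(U_h)}$. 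This proves \cref{eq:forward-energy-bound}.

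The only point needing care is the bound on the potential form in the finite-difference case, together with the sign conventions in the index shift of the duality identity. Both are routine, so I expect no real obstacle.
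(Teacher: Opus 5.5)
Your proposal is correct and follows essentially the same route as the paper. The paper likewise obtains coercivity of $\mathrm{Id}_{X_h}+\delta t(A_h+\mathsf V_h^n)$ with constant $1-\delta tM$, iterates $(1-\delta tM)^{-1}\le e^{2\delta tM}$ for the adjoint and forward schemes (using Cauchy--Schwarz on the control sum), and telescopes the stepwise identity $(y^n,p^n)-(y^{n-1},p^{n-1})=\delta t(B_hu^n,p^{n-1})$; your testing with $p^{n-1}$ and $y^n$ is just the energy form of its resolvent-norm bound.
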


\begin{proof}
For completeness, we recall the standard backward-Euler stability and discrete duality argument.

Since $\mathsf V_h^n$ is self-adjoint with norm at most $M$, the operator
$\mathrm{Id}_{X_h}+\delta t(A_h+\mathsf V_h^n)$ is bounded below by
$1-\delta t M>0$. Hence
\begin{align*}
\|(\mathrm{Id}_{X_h}+\delta t(A_h+\mathsf V_h^n))^{-1}\|
\le (1-\delta t M)^{-1}
\le e^{2\delta t M}.
\end{align*}
Iterating this one-step bound from $j$ to $\ell$ yields \cref{eq:adjointenergy}.

Next, applying the same bound to \cref{eq:forward} and using
\begin{align*}
\delta t\sum_{n=1}^N\|u^n\|_{U_h}
\le\sqrt T\|u\|_{\ell_{\delta t}^2(U_h)},
\end{align*}
gives \cref{eq:forward-energy-bound}.

It remains to prove the duality identity. For each time step, \cref{eq:forward,eq:adjoint} yield
\begin{align*}
(y^n,p^n)_{X_h}
&=(y^n,p^{n-1})_{X_h}
+\delta t\bigl((A_h+\mathsf V_h^n)y^n,p^{n-1}\bigr)_{X_h}\\
&=(y^{n-1},p^{n-1})_{X_h}
+\delta t(B_hu^n,p^{n-1})_{X_h}.
\end{align*}
Summing over $n=1,\ldots,N$ gives \cref{eq:trueD}.
\end{proof}

\subsection{Relaxed observability with exponentially small terminal remainder}\label{sec:observability}

We now derive the relaxed observability estimate from the discrete Carleman estimate. Fix the constants $\kappa,h_0,\Delta_0$ supplied by \cref{cor:scales}. Recall that $S_{h,{\delta t}}$ is defined in \cref{eq:mainS}. For $h\leq h_0$ and ${\delta t}\leq \Delta_0$, the discrete Carleman estimate is valid whenever $q_{\max}\leq S_{h,{\delta t}}$.

The discrete Carleman estimate yields observability with an exponentially small terminal remainder.
\begin{theorem}\label{prop:physicalO}
There exist constants $C_{\mathrm b},d_0>0$, depending additionally on $\omega$ and on the mesh bounds in \cref{prop:p1,prop:fd} for the chosen realization, such that the following holds:

Given $T>0$ and $M\ge0$, set
$
q_{\mathrm b}=C_{\mathrm b}\bigl(q_c+T^{-1}+M^{2/3}\bigr)$.
Then there exists a constant $C(T,M)>0$ such that, for every spatial mesh size $h$ and time step $\delta t=T/N$ satisfying
\begin{align}\label{eq:physicalconditions}
h\le h_0,\qquad
\delta t\le\min\{\Delta_0,T/12\},\qquad
\delta t M\le\tfrac12,\qquad
16q_{\mathrm b}\le S_{h,\delta t}\le\frac{q_{\mathrm b}T}{4\delta t},
\end{align}
every solution of \cref{eq:adjoint} satisfies
\begin{align}\label{eq:mainO}
\|p^0\|_{X_h}^2
\le C(T,M)\Bigl(
\delta t\sum_{n=1}^N\|B_h^*p^{n-1}\|_{U_h}^2
+e^{-2d_0S_{h,\delta t}}\|p^N\|_{X_h}^2\Bigr).
\end{align}
\end{theorem}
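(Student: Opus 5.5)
We will deduce \cref{eq:mainO} from the raw Carleman estimate \cref{eq:rawCE} with the $D_-$ orientation, applied to a time-cutoff of the adjoint sequence. The potential enters as a perturbation in the residual. Set $z^n=p^n$ for $0\le n\le N$ and note that \cref{eq:adjoint} reads $D_-p^{n-1}+A_hp^{n-1}=-\mathsf V_h^np^{n-1}$, which matches the periodic convention of \cref{sec:interfaces} after shifting indices.

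\textbf{Step 1: weights and cutoff.} Choose $q$ satisfying \cref{eq:qclass} with $q_c\ge1$ and $q_{\max}=S_{h,\delta t}$. Take $q$ of order $S_{h,\delta t}$ on $[T/4,3T/4]$ and of order $q_{\mathrm b}$ near the endpoints $0,T$. The transition is the regularized version of $s/(t(T-t))$, truncated at height $S_{h,\delta t}$, with matching values and derivatives at $0$ and $T$. The constraints $|q'|\le\beta q^2$ and $|q''|\le\beta^2q^3$ force the transition width to be at least of order $1/q_{\mathrm b}$, which is why $q_{\mathrm b}\gtrsim T^{-1}$. The upper condition $S_{h,\delta t}\le q_{\mathrm b}T/(4\delta t)$ ensures that the transition is resolved by the time grid.

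Let $\chi$ be a cutoff with $\chi=1$ on $[T/3,2T/3]$ and $\chi=0$ near $0$ and $T$, and define the nodal sequence $w^n=\chi(t_n)p^n$, which is automatically periodic. Then
\[
D_-w+A_hw=-\chi\mathsf V_hp+(D_-\chi)p^{n+1}.
\]
The commutator term is supported only on the two transition intervals.

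\textbf{Step 2: apply the Carleman estimate.} Apply \cref{eq:rawCE} with $(c_0,c_1;o)=(3,1;3)$. The potential term is bounded by $M^2\|w\|_\theta^2$. Since $q\ge q_c\ge C_{\mathrm b}M^{2/3}$, this term is absorbed by $\|q^{3/2}w\|_\theta^2$; this is where the $M^{2/3}$ in $q_{\mathrm b}$ is used. The observation term is bounded by $S^3\delta t\sum\theta^2\|B_h^*p\|^2$, using $\theta\le1$. On the left, restrict to $[T/3,2T/3]$. There $\theta\ge e^{-S\rho_{\max}}$, where $\rho_{\max}=\max_{\overline\Omega}|\rho|$, so the left side is at least
\[
e^{-2cS}\,\delta t\sum_{T/3\le t_n\le2T/3}\|p^n\|^2 .
\]
For the cutoff-difference terms, the weight satisfies $\theta\le e^{-q\rho_*}$.

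\textbf{Step 3: the endpoint terms, which is the main obstacle.} The difficulty is to make the cutoff terms exponentially smaller than the interior lower bound while keeping the total observation cost uniform. The terms near $t=0$ are dangerous, because there $q\sim q_{\mathrm b}$ and the weight is not small. To handle this, I will place the cutoff near $0$ at the early times $[T/12,T/6]$, where $q$ is already close to $S$ up to a controlled factor. The comparison is then between the weight ratio $\rho_*$ and $\rho_{\max}$ at different $q$ levels, as in the classical argument with singular weights. Near $t=T$, I will make the cutoff region lie where $q$ is still of order $S$. This region is then controlled by \cref{eq:adjointenergy}, which bounds every $\|p^n\|$ by $C\|p^N\|$, and its contribution after the weight ratio is $e^{-2d_0S}\|p^N\|^2$.

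The left-endpoint terms should instead be dominated by interior quantities, through the forward energy monotonicity of \cref{eq:adjointenergy} applied from later to earlier times. If the weight design does not allow direct absorption, I will fall back to the standard two-piece comparison. This uses the fact that $\alpha=\xi\rho$ has the property that the maximum of $\alpha$ on the late transition region is below the minimum on $[T/3,2T/3]$, which can be arranged because $\xi$ drops there. The price is that $S$ is replaced by a fixed fraction, $d_0S$.

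\textbf{Step 4: conclusion.} Combine the resulting estimate
\[
\delta t\sum_{\mathrm{mid}}\|p^n\|^2\le C\Bigl(\delta t\sum_n\|B_h^*p^{n-1}\|^2+e^{-2d_0S}\|p^N\|^2\Bigr)
\]
with \cref{eq:adjointenergy} in the form $\|p^0\|\le e^{2MT}\|p^n\|$, averaged over the middle nodes. The index shift between $B_h^*p^{n-1}$ and $B_h^*p^n$ is harmless. This gives \cref{eq:mainO}.
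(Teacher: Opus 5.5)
There is a genuine gap: your temporal weight has the wrong shape. You take $q\sim S_{h,\delta t}$ on the middle of $[0,T]$ and $q\sim q_{\mathrm b}$ near $t=0,T$, and Step~2 relies on this through $\theta\ge e^{-S\rho_{\max}}$ on $[T/3,2T/3]$. Your phrase ``regularized $s/(t(T-t))$ truncated at height $S$'' actually describes the opposite profile. The inverted profile breaks the argument in two independent ways.

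\emph{The cost is not uniform.} Your interior lower bound carries the factor $e^{-2\rho_{\max}S}$. At the same middle times, the only available bound for the observation weight $q^3\theta^2$ on $\omega$ is $S^3e^{-2\rho_*S}$. After division, the coefficient in front of $\delta t\sum_n\|B_h^*p^{n-1}\|_{U_h}^2$ is therefore exponentially large in $S$, of order $e^{2(\rho_{\max}-\rho_*)S}$, rather than a constant $C(T,M)$. The same happens for any profile in which $q$ grows with $S$ on the set where you recover $\|p^n\|_{X_h}$.

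\emph{The cutoff residuals cannot be made exponentially smaller than the interior bound.} At any cutoff time, $\theta\le e^{-q\rho_*}$ with $q\le q_{\max}=S$, whereas $\min_x\theta\ge e^{-S\rho_{\max}}$ in the middle and $\rho_{\max}>\rho_*$. Moving the left cutoff to $[T/12,T/6]$ does not change this. Your fallback also has the sign reversed. Since $\rho<0$, a smaller $\xi$ makes $\alpha=\xi\rho$ larger. So $\max_{\mathrm{cut}}\alpha<\min_{\mathrm{mid}}\alpha$ would require $\xi$ on the cutoff set to exceed $(\rho_{\max}/\rho_*)\max_{\mathrm{mid}}\xi$, which is impossible when $\xi\approx1$ in the middle.

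The missing idea is to invert the profile, as the paper does. Take $q(t)=q_{\mathrm b}/\sqrt{\sin^2(\pi t/T)+\delta^2}$ with $\delta=q_{\mathrm b}/S\le1/16$. Then $q\asymp q_{\mathrm b}$ on $[T/3,2T/3]$, and $q$ reaches the mesh scale $S$ only within $O(\delta T)$ of both endpoints. Place the cutoff transitions at distance between $\delta T$ and $2\delta T$ from $\{0,T\}$.
\begin{itemize}
\item On the transition cells, $q\ge S/\sqrt{1+9\pi^2}$, hence $\theta^2\le e^{-c_{\mathrm{end}}S}$ with $c_{\mathrm{end}}=2\rho_*/\sqrt{1+9\pi^2}$.
\item The condition $S\le q_{\mathrm b}T/(4\delta t)$ is exactly $\delta t\le\delta T/4$. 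So the transition cells have total length at most $3\delta T$, while $|\chi'|\le C/(\delta T)$.
\item Both residuals, including the one near $t=0$, are then bounded by $C(T,M)\,S\,e^{-c_{\mathrm{end}}S}\|p^N\|_{X_h}^2$ through \cref{eq:adjointenergy}, since every $p^j$ is controlled by $p^N$. The left endpoint needs no special treatment.
\item The observation weight satisfies $q^3\theta^2\le\sup_{s\ge q_c}s^3e^{-2\rho_*s}$ uniformly.
\item The interior lower bound $c\,q_{\mathrm b}^3e^{-Cq_{\mathrm b}}$ depends only on $T$ and $M$.
\end{itemize}
Together these give \cref{eq:mainO} for any $d_0$ with $2d_0<c_{\mathrm{end}}$.

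A smaller point: the potential is absorbed using $q\ge q_{\mathrm b}/\sqrt{1+\delta^2}$ and $q_{\mathrm b}^3\ge C_{\mathrm b}^3M^2$. Your bound $q_c\ge C_{\mathrm b}M^{2/3}$ conflates $q_c$ with $q_{\mathrm b}$.
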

\begin{proof}
We first fix the constants, then argue in three steps: we introduce a periodic Carleman weight and cut off the adjoint, we absorb the potential and estimate the endpoint contribution, and finally we recover the initial adjoint norm.

With the Carleman and load constants fixed, we choose $C_{\mathrm b}$ sufficiently large, independently of $T,M,h,\delta t$, so that the weight bounds in Step~1 and the potential absorption condition in Step~2 hold. This is possible because
$$
q_{\mathrm b}\ge C_{\mathrm b}q_c,\qquad
q_{\mathrm b}T\ge C_{\mathrm b},\qquad
q_{\mathrm b}^3\ge C_{\mathrm b}^3M^2.
$$
We fix this choice before taking $h,\delta t$ satisfying \cref{eq:physicalconditions}.

\medskip

\textbf{Step 1: Choice of the periodic weight and cutoff of the adjoint.}
Put $\delta=q_{\mathrm b}/S_{h,\delta t}\le1/16$ and define
\begin{align}\label{eq:physical-weight}
q(t)=\frac{q_{\mathrm b}}{\sqrt{\sin^2(\pi t/T)+\delta^2}}.
\end{align}
Then
$$
\min_{[0,T]}q=\frac{q_{\mathrm b}}{\sqrt{1+\delta^2}},
\qquad
\max_{[0,T]}q=S_{h,\delta t},
$$
and differentiation gives
$$
\frac{|q'|}{q^2}\le\frac{C}{q_{\mathrm b}T},\qquad
\frac{|q''|}{q^3}\le\frac{C}{q_{\mathrm b}^2T^2}.
$$
The choice of $C_{\mathrm b}$ ensures \cref{eq:qclass}. By \cref{eq:scalerule,eq:mainS}, the mesh conditions also give
$$
hS_{h,\delta t}\le\delta_x,\qquad
\delta t S_{h,\delta t}^2\le\delta_{\mathrm{time}}.
$$
Let $\chi$ be a smooth function with values in $[0,1]$ that vanishes at distance at most $\delta T$ from $\{0,T\}$, equals one at distance at least $2\delta T$, and satisfies $|\chi'|\le C/(\delta T)$. For the cutoff sequence and its loads, the discrete weighted norms below use the periodic index set $\{0,\ldots,N-1\}$. Define the periodic sequence
$$
z^j=\chi(t_j)p^j,\qquad 0\le j\le N-1,\qquad z^N=z^0=0,
$$
and set
$$
r^j=\frac{\chi(t_{j+1})-\chi(t_j)}{\delta t}\,p^{j+1},\qquad 0\le j\le N-1.
$$
Since $\chi(t_N)p^N=0$, \cref{eq:adjoint} gives
\begin{align}\label{eq:cutoffresidual}
D_-z^j+A_hz^j
=-\mathsf V_h^{j+1}z^j-r^j.
\end{align}
Define the potential load by
$$
f_{\mathrm{pot}}^j=
\begin{cases}
-b^{j+1}R_hz^j, &\text{for finite elements},\\[2pt]
(-b_i^{j+1}z_i^j)_i, &\text{for finite differences}.
\end{cases}
$$
In both cases, $\Gamma_hf_{\mathrm{pot}}^j=-\mathsf V_h^{j+1}z^j$, and the load satisfies
$$
\|f_{\mathrm{pot}}^j\|_{\mathfrak F_h,\theta_j}\le CM\|z^j\|_{X_h,\theta_j}.
$$
Set $f^j=f_{\mathrm{pot}}^j-\mathcal I_hr^j$. By \cref{ass:loads,eq:cutoffresidual},
$$
D_-z^j+A_hz^j=\Gamma_hf^j,\qquad 0\le j\le N-1.
$$
Applying \cref{eq:finalCE} to the periodic pair $(z,f)$, with $o=3$, and using \cref{eq:stateload}, we obtain
\begin{align}\label{eq:cutoff-carleman}
\mathcal E_{q,h,\delta t}(z)
&\le C\bigl(\|f\|_{\ell_{\delta t}^2(\mathfrak F_h),\theta}^2
+\|q^{3/2}B_h^*z\|_{\ell_{\delta t}^2(U_h),\theta}^2\bigr)\notag\\
&\le CM^2\|z\|_{\ell_{\delta t}^2(X_h),\theta}^2
+C\|r\|_{\ell_{\delta t}^2(X_h),\theta}^2
+C\|q^{3/2}B_h^*z\|_{\ell_{\delta t}^2(U_h),\theta}^2.
\end{align}

\medskip

\textbf{Step 2: Absorption of the potential and estimate of the endpoint contribution.}
By \cref{eq:discreteenergy} and $q\ge q_{\mathrm b}/\sqrt{1+\delta^2}$, our choice of $C_{\mathrm b}$ gives
\begin{align*}
CM^2\|z\|_{\ell_{\delta t}^2(X_h),\theta}^2
&\le\frac{CM^2(1+\delta^2)^{3/2}}{q_{\mathrm b}^3}
\|q^{3/2}z\|_{\ell_{\delta t}^2(X_h),\theta}^2 \le\frac12\|q^{3/2}z\|_{\ell_{\delta t}^2(X_h),\theta}^2
\le\frac12\mathcal E_{q,h,\delta t}(z).
\end{align*}
Since $z^j=\chi(t_j)p^j$, $0\le\chi\le1$, and $\rho\le-\rho_*$,
\begin{align*}
\|q^{3/2}B_h^*z\|_{\ell_{\delta t}^2(U_h),\theta}^2
&=\delta t\sum_{j=0}^{N-1}q_j^3\chi(t_j)^2\|B_h^*p^j\|_{U_h,\theta_j}^2 \le\Bigl(\sup_{s\ge q_c}s^3e^{-2\rho_*s}\Bigr)
\delta t\sum_{j=0}^{N-1}\|B_h^*p^j\|_{U_h}^2 \le C\delta t\sum_{j=0}^{N-1}\|B_h^*p^j\|_{U_h}^2.
\end{align*}

Let $\mathcal J_\chi$ denote the set of indices where the cutoff difference does not vanish:
$$
\mathcal J_\chi:=\{j\in\{0,\ldots,N-1\}:\chi(t_{j+1})\ne\chi(t_j)\}.
$$
Each corresponding cell meets $[\delta T,2\delta T]$ or $[T-2\delta T,T-\delta T]$. Counting these cells and using $\delta t\le\delta T/4$, we obtain
$$
\delta t\sum_{j\in\mathcal J_\chi}1\le2\delta T+4\delta t\le3\delta T.
$$
Moreover, $\min\{t_j,T-t_j\}\le3\delta T$ for every $j\in\mathcal J_\chi$.
Define $c_{\mathrm{end}}:=2\rho_*/\sqrt{1+9\pi^2}$, a positive constant depending only on $\rho_*$. Since $|\sin(\pi t_j/T)|\le3\pi\delta$ at these indices, \cref{eq:physical-weight} gives
$$
q_j\ge\frac{S_{h,\delta t}}{\sqrt{1+9\pi^2}},\qquad
\theta_j(x)^2\le e^{-c_{\mathrm{end}}S_{h,\delta t}},
\qquad x\in\Omega,\quad j\in\mathcal J_\chi.
$$
Using the derivative bound on $\chi$, \cref{eq:adjointenergy}, and $\delta=q_{\mathrm b}/S_{h,\delta t}$, we conclude
\begin{align}\label{eq:endpointbudget}
\|r\|_{\ell_{\delta t}^2(X_h),\theta}^2
&=\delta t\sum_{j\in\mathcal J_\chi}
\Bigl|\frac{\chi(t_{j+1})-\chi(t_j)}{\delta t}\Bigr|^2
\|p^{j+1}\|_{X_h,\theta_j}^2\notag\\
&\le\frac{C\delta t\sum_{j\in\mathcal J_\chi}1}{(\delta T)^2}
e^{-c_{\mathrm{end}}S_{h,\delta t}}e^{4MT}\|p^N\|_{X_h}^2\notag\\
&\le\frac{Ce^{4MT}}{q_{\mathrm b}T}S_{h,\delta t}e^{-c_{\mathrm{end}}S_{h,\delta t}}\|p^N\|_{X_h}^2\notag\\
&\le C(T,M)S_{h,\delta t}e^{-c_{\mathrm{end}}S_{h,\delta t}}\|p^N\|_{X_h}^2.
\end{align}
After absorption, \cref{eq:cutoff-carleman,eq:endpointbudget} therefore give
\begin{align}\label{eq:cutoff-energy}
\mathcal E_{q,h,\delta t}(z)
\le C\delta t\sum_{j=0}^{N-1}\|B_h^*p^j\|_{U_h}^2
+C(T,M)S_{h,\delta t}e^{-c_{\mathrm{end}}S_{h,\delta t}}\|p^N\|_{X_h}^2.
\end{align}

\medskip

\textbf{Step 3: Recovery of the initial adjoint norm.}
On $[T/3,2T/3]$, we have $\chi=1$, $q\asymp q_{\mathrm b}$, and $\theta\ge e^{-Cq_{\mathrm b}}$. Since $\delta t\le T/12$, the nodes in this interval have total discrete measure at least $T/6$. By \cref{eq:discreteenergy,eq:adjointenergy},
\begin{align}\label{eq:initial-adjoint-recovery}
\mathcal E_{q,h,\delta t}(z)
&\ge cq_{\mathrm b}^3e^{-Cq_{\mathrm b}}
\delta t\sum_{t_j\in[T/3,2T/3]}\|p^j\|_{X_h}^2 \ge c(T,M)\|p^0\|_{X_h}^2.
\end{align}
Choose $d_0>0$ so that $2d_0<c_{\mathrm{end}}$. Then
$$
S_{h,\delta t}e^{-c_{\mathrm{end}}S_{h,\delta t}}
\le Ce^{-2d_0S_{h,\delta t}},
$$
and \cref{eq:cutoff-energy,eq:initial-adjoint-recovery} imply \cref{eq:mainO}.
\end{proof}

\subsection{Uniform approximate null controllability with exponentially small terminal error}\label{sec:approximate-control}

By duality, \cref{prop:physicalO} yields approximate null controls with a cost uniform in $h$ and ${\delta t}$.
\begin{theorem}\label{prop:physicalC}
Under the hypotheses of \cref{prop:physicalO}, there exists $C(T,M)>0$ such that, for every $y_{0,h}\in X_h$, there is a control $u\in\ell_{\delta t}^2(U_h)$ satisfying
\begin{align}\label{eq:mainC}
\|u\|_{\ell_{\delta t}^2(U_h)}\leq C(T,M)\|y_{0,h}\|_{X_h}.
\end{align}
The corresponding solution of \cref{eq:forward} satisfies
\begin{align}\label{eq:main-terminal}
\|y^N\|_{X_h}\leq C(T,M)e^{-d_0S_{h,{\delta t}}}\|y_{0,h}\|_{X_h},
\end{align}
with $d_0$ as in \cref{prop:physicalO}.
\end{theorem}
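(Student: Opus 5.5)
The plan is the standard penalized HUM duality argument, applied to the relaxed observability estimate \cref{eq:mainO}. Write $\varepsilon:=e^{-2d_0S_{h,\delta t}}$ and, for $p_T\in X_h$, let $p$ be the solution of \cref{eq:adjoint} with $p^N=p_T$. On the finite-dimensional space $X_h$ define the functional
\begin{align*}
\mathcal J(p_T)=\frac12\,\delta t\sum_{n=1}^N\|B_h^*p^{n-1}\|_{U_h}^2
+\frac{\varepsilon}{2}\|p_T\|_{X_h}^2+(y_{0,h},p^0)_{X_h}.
\end{align*}
This functional is continuous and strictly convex, thanks to the $\varepsilon$ term. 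It is also coercive. Indeed, by \cref{eq:mainO},
\begin{align*}
|(y_{0,h},p^0)_{X_h}|\le\|y_{0,h}\|_{X_h}\sqrt{C(T,M)}\,\bigl(\text{quadratic part}\bigr)^{1/2}.
\end{align*}
Young's inequality then gives
\begin{align*}
\mathcal J(p_T)\ge\frac14\Bigl(\delta t\sum\|B_h^*p^{n-1}\|^2+\varepsilon\|p_T\|^2\Bigr)-C(T,M)\|y_{0,h}\|_{X_h}^2.
\end{align*}
Hence $\mathcal J$ has a unique minimizer $\hat p_T$. The control is $u^n:=B_h^*\hat p^{\,n-1}$ for $1\le n\le N$, which is consistent with the index convention $\mathcal J=\{1,\dots,N\}$ for $\ell^2_{\delta t}(U_h)$.

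\textbf{Optimality condition.} Differentiating $\mathcal J$ at $\hat p_T$ in a direction $p_T$ gives
\begin{align*}
\delta t\sum_{n=1}^N(u^n,B_h^*p^{n-1})_{U_h}+\varepsilon(\hat p_T,p_T)_{X_h}+(y_{0,h},p^0)_{X_h}=0.
\end{align*}
Compare this with the duality identity \cref{eq:trueD} for the solution $y$ of \cref{eq:forward} driven by $u$. This yields $(y^N,p_T)_{X_h}=-\varepsilon(\hat p_T,p_T)_{X_h}$ for every $p_T$, so
\begin{align*}
y^N=-\varepsilon\hat p_T.
\end{align*}

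\textbf{Cost bounds.} Since $\mathcal J(\hat p_T)\le\mathcal J(0)=0$, the coercivity bound gives
\begin{align*}
\|u\|_{\ell^2_{\delta t}(U_h)}^2+\varepsilon\|\hat p_T\|_{X_h}^2\le C(T,M)\|y_{0,h}\|_{X_h}^2.
\end{align*}
The first part is exactly \cref{eq:mainC}. The second part gives $\|\hat p_T\|_{X_h}\le C\varepsilon^{-1/2}\|y_{0,h}\|_{X_h}$. Combining this with $y^N=-\varepsilon\hat p_T$ yields
\begin{align*}
\|y^N\|_{X_h}\le C\varepsilon^{1/2}\|y_{0,h}\|_{X_h}=Ce^{-d_0S_{h,\delta t}}\|y_{0,h}\|_{X_h},
\end{align*}
which is \cref{eq:main-terminal}.

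\textbf{Where the difficulty lies.} The main point to check is the coercivity step. There one has to verify that the observation sum in \cref{eq:mainO}, namely $\delta t\sum_{n=1}^N\|B_h^*p^{n-1}\|^2$, matches precisely the term produced by \cref{eq:trueD}, and that the adjoint used in \cref{eq:mainO} is the same \cref{eq:adjoint} with self-adjoint potential. Once these match, the rest is routine.
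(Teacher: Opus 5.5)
Your proposal is correct and is the same penalized HUM argument the paper uses, built on \cref{eq:mainO}, \cref{eq:trueD} and the penalty $e^{-2d_0S_{h,\delta t}}$. The only differences are cosmetic. You use the opposite sign convention, with $+(y_{0,h},p^0)_{X_h}$ and $u^n=B_h^*\hat p^{\,n-1}$, so that $y^N=-\varepsilon\hat p_T$. You also extract the cost bound from $\mathcal J(\hat p_T)\le\mathcal J(0)$ plus coercivity, whereas the paper tests the optimality condition with $\hat p_T$; this affects only the constant.
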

\begin{proof}
We use the standard penalized HUM argument; see \cite[Propositions~2.2 and~2.4]{Boyer2013}. For completeness, we present the details in four steps: we introduce a strictly convex functional, take its unique minimizer, use the optimality condition to identify the resulting control and terminal state, and finally estimate them.

\medskip

For each $p_T\in X_h$, let $p$ solve \cref{eq:adjoint}, and define
$$
J(p_T)=\frac{\delta t}{2}\sum_{n=1}^N\|B_h^*p^{n-1}\|_{U_h}^2
+\frac{e^{-2d_0S_{h,\delta t}}}{2}\|p_T\|_{X_h}^2
-(y_{0,h},p^0)_{X_h}.
$$
For fixed $h,\delta t$, the positive penalty coefficient makes $J$ strictly convex and coercive. Hence $J$ admits a unique minimizer $\widehat p_T\in X_h$. Let $\widehat p$ denote the corresponding  solution of  \cref{eq:adjoint}.

\smallskip

Since $\widehat p_T$ minimizes $J$, for every $p_T\in X_h$ and its associated adjoint solution $p$, we have
\begin{align}\label{eq:hum-optimality}
\delta t\sum_{n=1}^N(B_h^*\widehat p^{n-1},B_h^*p^{n-1})_{U_h}
+e^{-2d_0S_{h,\delta t}}(\widehat p_T,p_T)_{X_h}
=(y_{0,h},p^0)_{X_h}.
\end{align}

Choose
$
u^n=-B_h^*\widehat p^{n-1}$, $1\le n\le N$,
and let $y$ solve \cref{eq:forward}. Combining the duality identity \cref{eq:trueD} with the optimality condition \cref{eq:hum-optimality} yields
$$
(y^N,p_T)_{X_h}=e^{-2d_0S_{h,\delta t}}(\widehat p_T,p_T)_{X_h},
\qquad p_T\in X_h.
$$
Since $p_T\in X_h$ is arbitrary, it follows that
$
y^N=e^{-2d_0S_{h,\delta t}}\widehat p_T$.

Taking $p_T=\widehat p_T$ in \cref{eq:hum-optimality} and applying \cref{eq:mainO} to $\widehat p$, we obtain
\begin{align*}
\|u\|_{\ell_{\delta t}^2(U_h)}^2+e^{-2d_0S_{h,\delta t}}\|\widehat p_T\|_{X_h}^2
&=(y_{0,h},\widehat p^0)_{X_h} \le\sqrt{C(T,M)}\|y_{0,h}\|_{X_h}
\bigl(\|u\|_{\ell_{\delta t}^2(U_h)}^2
+e^{-2d_0S_{h,\delta t}}\|\widehat p_T\|_{X_h}^2\bigr)^{1/2}.
\end{align*}
Dividing by the square-root factor gives
$$
\|u\|_{\ell_{\delta t}^2(U_h)}^2+e^{-2d_0S_{h,\delta t}}\|\widehat p_T\|_{X_h}^2
\le C(T,M)\|y_{0,h}\|_{X_h}^2.
$$
In particular, this yields \cref{eq:mainC}. Finally, combining the identity $y^N=e^{-2d_0S_{h,\delta t}}\widehat p_T$ with the preceding bound gives
$$
\|y^N\|_{X_h}\le C(T,M)e^{-d_0S_{h,\delta t}}\|y_{0,h}\|_{X_h},
$$
which is \cref{eq:main-terminal}.
\end{proof}

\begin{remark}\label{rem:fully-discrete-comparison}
Fix $T,M$ and consider the parabolic scaling $\delta t\asymp h^2$ with $\delta t\le T^{-2}h^2$. Under this scaling, \cref{eq:mainS,eq:main-terminal} yield the terminal-error decay factor $e^{-ch^{-4/5}}$. For the same scaling, \cite[Proposition~3.1 and Theorem~1.10]{GCH} gives the factor $e^{-ch^{-1/2}}$ for one-dimensional finite differences with the backward Euler scheme. Thus, in this common setting, \cref{prop:physicalC} improves the decay exponent and, in addition, applies to the higher-dimensional realizations in \cref{prop:p1,prop:fd}. Since both bounds decay faster than any fixed algebraic power of $h$, they are consistent with the practical accuracy criterion discussed in \cite[Remark~4.4]{Boyer2014}.
\end{remark}
\begin{remark}
\Cref{prop:physicalC} yields $\phi$-null controllability in the sense of \cite[Definition~2.1 and Theorem~3.2]{Boyer2013}. Indeed, taking
$\phi(h,\delta t)=e^{-2d_0S_{h,\delta t}},$
we obtain a control cost that is uniform in $h,\delta t$, together with a terminal error bounded by $C(T,M)\sqrt{\phi(h,\delta t)}\,\|y_{0,h}\|_{X_h}$.
\end{remark}

\begin{remark}
For fixed $T,M$, the conditions in \cref{eq:physicalconditions} hold for all sufficiently small $h,\delta t$ when $h,\delta t\to0$ independently, with $\delta t=T/N$. Indeed, \cref{eq:mainS} gives
\begin{align*}
\lim_{h,\delta t\to0}S_{h,\delta t}=+\infty.
\end{align*}
Moreover, $0\leq\delta t S_{h,\delta t}\leq\kappa(\delta t)^{3/5}$ implies
\begin{align*}
\lim_{h,\delta t\to0}\delta t S_{h,\delta t}=0.
\end{align*}
Thus the two-sided bound on $S_{h,\delta t}$ is eventually satisfied. If, in addition, one imposes the optional scaling $\delta t\le ch^2$, then $S_{h,\delta t}\asymp h^{-4/5}$ is retained.
\end{remark}

\subsection{Uniform observability under coarse-space filtering}\label{sec:filtering}

Suitable filtering makes it possible to control the retained component of the terminal state exactly to zero, with a cost that is bounded independently of $h,\delta t$ and a full terminal state that is exponentially small \cite{Zuazua2005}. In our setting, the terminal remainder in \cref{eq:mainO} can be absorbed in an analogous way, provided that the terminal adjoint data have a sufficiently small discrete Rayleigh quotient.

Throughout this subsection, we assume \cref{eq:physicalconditions}, fix $0<\vartheta<1$, write $S=S_{h,\delta t}$, and set
\begin{align}\label{9.26-eq15}
C_0=\max\{1,C(T,M)\},
\end{align}
with $C(T,M)$ as in \cref{prop:physicalO}. For a terminal subspace $F\subset X_h$, define the discrete Rayleigh quotient\vspace{-2mm}
\begin{align}\label{eq:filter-rayleigh}
\Lambda_F=\sup_{0\ne v\in F}\frac{(A_hv,v)_{X_h}}{\|v\|_{X_h}^2},
\end{align}
with $\Lambda_{\{0\}}=0$. We then set\vspace{-2mm}
$$
D_{T,M}=MT+\tfrac12M^2T^2,
\qquad
K_F=e^{T\Lambda_F+D_{T,M}}.
$$
The restriction is imposed only on $p^N$, while the fine-grid equations remain unchanged. In particular, neither $A_h$ nor the potential is required to preserve $F$.

We first bound the terminal norm by the initial adjoint norm.
\begin{lemma}\label{lem:filter-reverse}
Every solution of \cref{eq:adjoint} with $p^N\in F$ satisfies
\begin{align}\label{eq:filter-reverse}
\|p^N\|_{X_h}\leq K_F\|p^0\|_{X_h}.
\end{align}
\end{lemma}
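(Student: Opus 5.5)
The plan is a discrete log-convexity (frequency-function) argument. It runs backward in the index, i.e.\ forward in $n$ from $0$ to $N$, for the one-step maps $p^{n-1}=(\mathrm{Id}_{X_h}+\delta t B_n)^{-1}p^n$, where $B_n=A_h+\mathsf V_h^n$.

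In both realizations $\mathsf V_h^n$ is self-adjoint in $X_h$ with norm at most $M$ (it is a real multiplication or potential form). Hence $B_n$ is self-adjoint and bounded below by $-M$. Since $\delta tM\le\tfrac12$, $\mathrm{Id}+\delta tB_n$ is positive definite and the adjoint system is the stated recursion. If some $p^n=0$ then all $p^j=0$ by invertibility, so we may assume every $p^n\ne0$.

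Put $Q_n=(A_hp^n,p^n)_{X_h}/\|p^n\|_{X_h}^2$, so $Q_N\le\Lambda_F$ because $p^N\in F$.

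\textbf{Step 1: one-step norm growth.} Let $\nu$ be the spectral measure of $B_n$ associated with $p^n$, normalized to a probability measure. Then
\begin{align*}
\frac{\|p^{n-1}\|_{X_h}^2}{\|p^n\|_{X_h}^2}=\int(1+\delta t\lambda)^{-2}\,d\nu(\lambda).
\end{align*}
The map $\lambda\mapsto(1+\delta t\lambda)^{-2}$ is convex on $\{1+\delta t\lambda>0\}$, so Jensen's inequality gives the lower bound $(1+\delta t\int\lambda\,d\nu)^{-2}$. Moreover $\int\lambda\,d\nu=(B_np^n,p^n)/\|p^n\|^2\le Q_n+M$. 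Hence
\begin{align*}
\|p^n\|_{X_h}\le\bigl(1+\delta t(Q_n+M)\bigr)\|p^{n-1}\|_{X_h}\le e^{\delta t(Q_n+M)}\|p^{n-1}\|_{X_h}.
\end{align*}
Multiplying over $n=1,\dots,N$ yields $\|p^N\|\le\exp\bigl(\delta t\sum_n(Q_n+M)\bigr)\|p^0\|$.

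\textbf{Step 2: control of the frequency (main obstacle).} We need $Q_{n-1}\le Q_n+M^2\delta t$, up to a harmless constant, mimicking the continuous computation. In the continuous case, backward in time,
\begin{align*}
\tfrac{d}{ds}Q=-2\|(A-Q)p\|^2/\|p\|^2-2((A-Q)p,Vp)/\|p\|^2\le\tfrac12M^2,
\end{align*}
after Young's inequality. Discretely I would write $p^n=p^{n-1}+\delta t(A_h+\mathsf V_h^n)p^{n-1}$ and expand $(A_hp^n,p^n)$ and $\|p^n\|^2$ in terms of $w=p^{n-1}$. With $\bar Q=Q_{n-1}$, the difference
\begin{align*}
(A_hp^n,p^n)-\bar Q\|p^n\|^2
\end{align*}
equals $2\delta t\|(A_h-\bar Q)w\|^2+2\delta t((A_h-\bar Q)w,\mathsf V_h^nw)$ plus nonnegative $\delta t^2$ terms such as $\delta t^2\bigl((A_h-\bar Q)Bw,Bw\bigr)$. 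That last term is the delicate one. I would handle it by working instead with the exact spectral representation of $(\mathrm{Id}+\delta tB_n)^{-1}$, using that the ratio of $A$- and $B$-Rayleigh quotients shifts by at most $M$. If needed, I would pass to the frequency of $B_n$ and track the change $\mathsf V_h^n\to\mathsf V_h^{n-1}$ only through the $A_h$-quotient. The Young absorption of the cross term against $2\delta t\|(A_h-\bar Q)w\|^2$ leaves $\tfrac12\delta tM^2\|w\|^2$, and dividing by $\|p^n\|^2\ge(1-\delta tM)^2\|w\|^2$ gives $Q_{n-1}\le Q_n+CM^2\delta t$.

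\textbf{Step 3: summation.} Step 2 gives $Q_n\le\Lambda_F+CM^2(T-t_n)$. Hence
\begin{align*}
\delta t\sum_{n=1}^N(Q_n+M)\le T\Lambda_F+MT+\tfrac12M^2T^2=T\Lambda_F+D_{T,M},
\end{align*}
provided the constant in Step 2 is the sharp $\tfrac12$-type constant. Any slack would only enlarge $D_{T,M}$ by a fixed factor, and would be reconciled by refining the Young step. Inserting this into Step 1 gives $\|p^N\|_{X_h}\le K_F\|p^0\|_{X_h}$.
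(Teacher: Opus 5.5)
Your overall architecture matches the paper's: a one-step norm bound in terms of the Rayleigh quotient at the later level, a one-step frequency bound $Q_{n-1}\le Q_n+\delta t M^2$, and summation. Step~1 is correct; the single Jensen step for $B_n$ even gives the factor $1+\delta t(Q_n+M)$, slightly better than the paper's $(1+\delta tM)(1+\delta tQ_n)$.

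The gap is Step~2. It is the real content of the lemma, and you do not prove it. The term $\delta t^2((A_h-\bar Q)B_nw,B_nw)$ is not nonnegative, because $A_h-\bar Q$ is indefinite. For example, let $w$ be a unit eigenvector of $A_h$ with eigenvalue $\bar Q$, and suppose $\zeta:=(\mathsf V_h^n-b)w\ne0$, where $b=(\mathsf V_h^nw,w)$. Then both first-order terms $2\delta t\|(A_h-\bar Q)w\|^2$ and $2\delta t((A_h-\bar Q)w,\mathsf V_h^nw)$ vanish. The whole difference equals $\delta t^2((A_h-\bar Q)\zeta,\zeta)$, which is negative whenever $\zeta$ lies in modes below $\bar Q$. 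So the entire deficit sits in the term you called nonnegative, and your Young absorption has nothing to absorb against.

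Bounding that term crudely by $-\delta t^2\bar Q\|B_nw\|^2$ produces errors of size $\delta t^2\bar Q^3\|w\|^2$. These do not vanish even for $M=0$ and are not $O(\delta tM^2)$ uniformly in $\bar Q$. The fallback through the $B_n$-quotient fails as well. That quotient is monotone within one step, but each conversion between $A_h$- and $B_n$-quotients costs up to $M$. Since $b$ has no time regularity, $\mathsf V_h^n$ and $\mathsf V_h^{n-1}$ are unrelated. The frequency can therefore drift by $O(M)$ per step, and the resulting exponent blows up as $\delta t\to0$.

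The missing idea is to expand $p^n$ orthogonally with respect to $w=p^{n-1}$ (normalized, $\|w\|=1$), rather than in terms of $w$ and $B_nw$. Set $r=Q_{n-1}$, $v=(A_h-r)w$, $\xi=v+\zeta\perp w$ and $\gamma=1+\delta t(r+b)$. Then $p^n=\gamma w+\delta t\xi$ and $\|p^n\|^2=\gamma^2+\delta t^2\|\xi\|^2$. Using only $A_h\ge0$ and $\|\zeta\|\le M$,
\[ ((A_h-r)p^n,p^n)=2\gamma\delta t(v,\xi)+\delta t^2((A_h-r)\xi,\xi)\ge\delta t(2\gamma-\delta tr)\|\xi\|^2-2\gamma\delta tM\|\xi\|. \]
This works because the piece $2\delta t^2(r+b)(v,\xi)$ of your delicate term merges with the first-order term into the factor $\gamma$. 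The $\delta tr$ inside $\gamma$ then absorbs $-\delta t^2r\|\xi\|^2$, since $2\gamma-\delta tr=2+\delta tr+2\delta tb\ge2(1-\delta tM)$ uniformly in $\delta tr$. Minimizing over $\|\xi\|$ and dividing by $\|p^n\|^2\ge\gamma^2$ gives $Q_{n-1}\le Q_n+\delta tM^2/(2(1-\delta tM))\le Q_n+\delta tM^2$.

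This also settles the constant you left open. Summation needs a coefficient at most $1$ in front of $\delta tM^2$ to yield exactly $D_{T,M}=MT+\tfrac12M^2T^2$. Dividing a deficit $-\tfrac12\delta tM^2$ by $(1-\delta tM)^2$, as you propose, gives a coefficient up to $2$ under $\delta tM\le\tfrac12$. Enlarging $D_{T,M}$ would not prove the lemma as stated.
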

The proof of \cref{lem:filter-reverse} is given in \cref{sec:filter-reverse-proof}.
It shows that, at each backward-Euler step, the Rayleigh quotient can increase by at most $\delta t\,M^2$. Importantly, the resulting constants are independent of both $\dim X_h$ and $\|A_h\|$.

\begin{theorem}\label{thm:filtered-observability}
Recall $C_0$ is given by \cref{9.26-eq15}. If
\begin{align}\label{eq:filter-condition}
T\Lambda_F\leq\vartheta d_0S,
\qquad (1-\vartheta)d_0S\geq D_{T,M}+\tfrac12\log(4C_0),
\end{align}
then every solution of \cref{eq:adjoint} with $p^N\in F$ satisfies
\begin{align}\label{eq:filtered-observability}
\|p^0\|_{X_h}^2
\leq2C_0\,\delta t\sum_{n=1}^N\|B_h^*p^{n-1}\|_{U_h}^2.
\end{align}
\end{theorem}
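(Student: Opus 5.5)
The argument combines the relaxed observability estimate \cref{eq:mainO} of \cref{prop:physicalO} with the reverse bound \cref{eq:filter-reverse} of \cref{lem:filter-reverse}, and then absorbs the terminal remainder.

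Let $p$ solve \cref{eq:adjoint} with $p^N\in F$. Since $C(T,M)\le C_0$, \cref{eq:mainO} gives
\begin{align*}
\|p^0\|_{X_h}^2\le C_0\,\delta t\sum_{n=1}^N\|B_h^*p^{n-1}\|_{U_h}^2+C_0e^{-2d_0S}\|p^N\|_{X_h}^2.
\end{align*}
By \cref{lem:filter-reverse}, $\|p^N\|_{X_h}^2\le K_F^2\|p^0\|_{X_h}^2=e^{2T\Lambda_F+2D_{T,M}}\|p^0\|_{X_h}^2$. The remainder coefficient is therefore
\begin{align*}
C_0\exp\bigl(2T\Lambda_F+2D_{T,M}-2d_0S\bigr).
\end{align*}
The two conditions in \cref{eq:filter-condition} are designed to make this coefficient small. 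The first condition gives $2T\Lambda_F-2d_0S\le-2(1-\vartheta)d_0S$. The second gives $-2(1-\vartheta)d_0S+2D_{T,M}\le-\log(4C_0)$. Together they show the coefficient is at most $C_0/(4C_0)=1/4$.

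Hence $\|p^0\|_{X_h}^2\le C_0\,\delta t\sum_n\|B_h^*p^{n-1}\|_{U_h}^2+\tfrac14\|p^0\|_{X_h}^2$. Absorbing the last term yields the bound with constant $\tfrac43C_0\le 2C_0$, which is \cref{eq:filtered-observability}.

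There is no real obstacle once \cref{lem:filter-reverse} is available; the only care needed is the exponent bookkeeping. The substantive difficulty lies inside \cref{lem:filter-reverse}, namely controlling growth of the Rayleigh quotient under nonautonomous backward steps with the potential. That is taken as given here.
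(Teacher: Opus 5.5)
Your proposal is correct and follows the paper's proof: the paper also inserts \cref{eq:filter-reverse} into \cref{eq:mainO}, uses \cref{eq:filter-condition} to get $4C_0e^{-2d_0S}K_F^2\le1$, and absorbs the resulting $\tfrac14\|p^0\|_{X_h}^2$. Your exponent bookkeeping checks out, and it gives the slightly sharper constant $\tfrac43C_0\le2C_0$.
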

\begin{proof}
By \cref{eq:filter-condition}, $4C_0e^{-2d_0S}K_F^2\leq1$. Inserting \cref{eq:filter-reverse} into \cref{eq:mainO} bounds the remainder by $\tfrac14\|p^0\|_{X_h}^2$. Absorption proves \cref{eq:filtered-observability}.
\end{proof}

A penalty on the complementary terminal space gives one control that annihilates the retained component and leaves an exponentially small full terminal state. Let $\Pi_F$ denote the $X_h$-orthogonal projection onto $F$.
\begin{corollary}\label{cor:filtered-control}
Assume \cref{eq:filter-condition}. Then, for every $y_{0,h}\in X_h$, there exists a control $u\in\ell_{\delta t}^2(U_h)$ whose corresponding solution of \cref{eq:forward} satisfies $\Pi_Fy^N=0$.
Moreover, the control cost satisfies
\begin{align}\label{eq:filtered-control}
\|u\|_{\ell_{\delta t}^2(U_h)}\le\sqrt{2C_0}\|y_{0,h}\|_{X_h},
\end{align}
while the full terminal state fulfills
\begin{align}\label{eq:filtered-terminal}
\|y^N\|_{X_h}\le C(T,M)e^{-(1-\vartheta)d_0S}\|y_{0,h}\|_{X_h},
\end{align}
where the constant $C(T,M)$ is as in \cref{prop:physicalO}, which is independent of $h,\delta t$ and $F$.
\end{corollary}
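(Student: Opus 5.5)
We run a penalized HUM argument in which the terminal penalty acts only on the complementary space $F^\perp$, the $X_h$-orthogonal complement of $F$. The filtered observability \cref{eq:filtered-observability} will give the uniform cost. The relaxed observability \cref{eq:mainO} will control the penalty part.

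For $p_T\in X_h$, with $p$ the solution of \cref{eq:adjoint}, define
\begin{align*}
J_F(p_T)=\frac{\delta t}{2}\sum_{n=1}^N\|B_h^*p^{n-1}\|_{U_h}^2
+\frac{\varepsilon}{2}\|(\mathrm{Id}-\Pi_F)p_T\|_{X_h}^2-(y_{0,h},p^0)_{X_h},
\qquad \varepsilon=e^{-2(1-\vartheta)d_0S}.
\end{align*}
\textbf{Existence of a minimizer.} Strict convexity and coercivity need an argument, since the penalty does not see $F$. I would use \cref{eq:mainO} on the full $p_T$, which bounds
\[
\|p^0\|^2\le C_0\bigl(\text{obs}+e^{-2d_0S}\|p_T\|^2\bigr),
\]
and then split $p_T=\Pi_Fp_T+(\mathrm{Id}-\Pi_F)p_T$. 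Let $p_F$ be the adjoint solution with terminal datum $\Pi_Fp_T$. By \cref{lem:filter-reverse}, $\|\Pi_Fp_T\|\le K_F\|p_F^0\|$. By \cref{thm:filtered-observability}, $\|p_F^0\|^2\le 2C_0\,\text{obs}(p_F)$. The adjoint map is linear, and the observation of the complementary part is controlled by its terminal norm through \cref{eq:adjointenergy}. Together these give coercivity in finite dimensions. A cleaner route is to minimize over $X_h$ directly and note that $J_F$ is a convex quadratic bounded below; then a minimizer exists, and uniqueness is not needed.

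\textbf{Optimality condition.} Let $\widehat p_T$ be a minimizer and set $u^n=-B_h^*\widehat p^{n-1}$. The Euler--Lagrange equation combined with the duality identity \cref{eq:trueD} gives, for all $p_T$,
\[
(y^N,p_T)=\varepsilon\bigl((\mathrm{Id}-\Pi_F)\widehat p_T,p_T\bigr).
\]
Hence $y^N=\varepsilon(\mathrm{Id}-\Pi_F)\widehat p_T$, so $\Pi_Fy^N=0$.

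\textbf{Key estimate.} Taking $p_T=\widehat p_T$ yields
\[
\|u\|^2+\varepsilon\|(\mathrm{Id}-\Pi_F)\widehat p_T\|^2=(y_{0,h},\widehat p^0).
\]
It remains to bound $\|\widehat p^0\|^2$ by $2C_0$ times the left side. Write $\widehat p=p_F+p_\perp$. Then
\[
\|\widehat p^0\|\le\|p_F^0\|+\|p_\perp^0\|.
\]
Apply \cref{eq:filtered-observability} to $p_F$, giving $\|p_F^0\|^2\le 2C_0\,\text{obs}(p_F)$, with $\text{obs}(p_F)\le 2\,\text{obs}(\widehat p)+2\,\text{obs}(p_\perp)$. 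For $p_\perp$, the energy bound \cref{eq:adjointenergy} controls both $\|p_\perp^0\|$ and $\text{obs}(p_\perp)$ by $C(T,M)\|(\mathrm{Id}-\Pi_F)\widehat p_T\|$. That is not small in general: we only know $\varepsilon$ times this squared norm is bounded, and $\varepsilon$ is exponentially small.

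This is the main obstacle. To avoid it, I would instead apply \cref{eq:mainO} to $\widehat p$ itself:
\[
\|\widehat p^0\|^2\le C_0\bigl(\|u\|^2+e^{-2d_0S}\|\widehat p_T\|^2\bigr).
\]
Then split
\[
\|\widehat p_T\|^2=\|\Pi_F\widehat p_T\|^2+\|(\mathrm{Id}-\Pi_F)\widehat p_T\|^2 .
\]
The second piece has coefficient $e^{-2d_0S}\le\varepsilon$, so it is bounded by the penalty term. For the first piece, combine $K_F\|p_F^0\|$ from \cref{lem:filter-reverse} with $4C_0e^{-2d_0S}K_F^2\le1$ from \cref{eq:filter-condition}. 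It then reduces to $\tfrac14\|p_F^0\|^2$, and $\|p_F^0\|\le\|\widehat p^0\|+\|p_\perp^0\|$. The $p_\perp$ term is handled using \cref{eq:adjointenergy} and the factor $e^{-2\vartheta d_0S}$ margin between $e^{-2d_0S}K_F^2$ and $\varepsilon$, so its contribution is bounded by the penalty term. This gives $\|\widehat p^0\|^2\le 2C_0(\text{LHS})$ after adjusting constants.

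\textbf{Conclusion.} Cauchy--Schwarz gives $\text{LHS}\le 2C_0\|y_{0,h}\|^2$. This yields \cref{eq:filtered-control}, and
\[
\|y^N\|=\varepsilon\|(\mathrm{Id}-\Pi_F)\widehat p_T\|\le\sqrt{\varepsilon}\sqrt{2C_0}\|y_{0,h}\|,
\]
which is \cref{eq:filtered-terminal}. I expect bookkeeping of the constants to match exactly $\sqrt{2C_0}$ to be the delicate point.
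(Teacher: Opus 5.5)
Once you apply \cref{eq:mainO} to the full adjoint, your argument is essentially the paper's proof. The paper derives its complementary observability inequality the same way: split $p_T$ orthogonally and bound $\|\Pi_Fp_T\|\le K_F\bigl(\|p^0\|+e^{2MT}\|(\mathrm{Id}-\Pi_F)p_T\|\bigr)$ using \cref{lem:filter-reverse} and \cref{eq:adjointenergy}. Your penalty on $F^\perp$, with $y^N=\varepsilon(\mathrm{Id}-\Pi_F)\widehat p_T$, is also the paper's construction. You were right to drop the attempt via \cref{eq:filtered-observability}.

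\textbf{Why your weight does not give $\sqrt{2C_0}$.} There is no exponential margin between $e^{-2d_0S}K_F^2$ and $\varepsilon=e^{-2(1-\vartheta)d_0S}$. \cref{eq:filter-condition} only gives $K_F^2\le e^{2\vartheta d_0S+2D_{T,M}}$, hence only $e^{-2d_0S}K_F^2\le e^{2D_{T,M}}\varepsilon$.

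The order of absorption also matters. Expand $\|p_F^0\|^2\le2\|p^0\|^2+2\|p_\perp^0\|^2$ first, and use $4C_0e^{-2d_0S}K_F^2\le1$ only on the $\|p^0\|^2$ term. If you reduce to $\tfrac14\|p_F^0\|^2$ first, as literally written, $\|p_\perp^0\|^2$ keeps an $O(1)$ coefficient that no exponentially small penalty can absorb. Done correctly, the argument gives, for every $p_T\in X_h$,
\[
\|p^0\|^2\le2C_0\Bigl(\delta t\sum_{n=1}^N\|B_h^*p^{n-1}\|_{U_h}^2+\eta_F^2\|(\mathrm{Id}-\Pi_F)p_T\|^2\Bigr),
\qquad
\eta_F^2:=e^{-2d_0S}\bigl(1+2e^{4MT}K_F^2\bigr).
\]
The ratio $\eta_F^2/\varepsilon$ is in general larger than $1$; for $M=0$ and $T\Lambda_F=\vartheta d_0S$ it already exceeds $2$. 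So with your $\varepsilon$, the Cauchy--Schwarz step only gives $\|u\|\le\sqrt{2C_0\eta_F^2/\varepsilon}\,\|y_{0,h}\|$. That is uniform in $h,\delta t,F$, but it is not \cref{eq:filtered-control} as stated.

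\textbf{The repair.} Use a penalty weight $w\ge\eta_F^2$: either $w=\eta_F^2$, as the paper does, or the $F$-independent choice $w=(1+2e^{4MT})e^{2D_{T,M}}e^{-2(1-\vartheta)d_0S}$. Then the identity $\|u\|^2+w\|(\mathrm{Id}-\Pi_F)\widehat p_T\|^2=(y_{0,h},\widehat p^0)$, combined with the displayed inequality, gives \cref{eq:filtered-control} exactly. It also gives
\[
\|y^N\|=w\|(\mathrm{Id}-\Pi_F)\widehat p_T\|\le\sqrt{2C_0w}\,\|y_{0,h}\|,
\qquad
\sqrt{w}\le\sqrt{1+2e^{4MT}}\,e^{D_{T,M}}e^{-(1-\vartheta)d_0S},
\]
which is \cref{eq:filtered-terminal}.

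\textbf{Existence of the minimizer.} State the key inequality for arbitrary $p_T$, not just for $\widehat p$; its derivation never uses minimality. It shows that the quadratic part of $J_F$ vanishes only if $p^0=0$, and then $p_T=0$ because the backward Euler resolvents are invertible. Hence $J_F$ is strictly convex and coercive on $X_h$, which is what justifies existence (and uniqueness) of the minimizer. Your claim that $J_F$ is a convex quadratic bounded below needs exactly this inequality, and proving it only for $\widehat p$ would be circular.
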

The proof of \cref{cor:filtered-control} is given in \cref{sec:filter-control-proof}.

Coarse-space realizations are motivated by the two-grid strategies studied for numerical wave controllability in \cite{NegreanuZuazua2004,IgnatZuazua2009}, where interpolation from a coarser grid yields adjoint data with mesh-uniform observability. In the present parabolic setting, the admissible coarse scale follows from \cref{eq:filter-condition}.

For consistent-mass finite elements, a nested coarse space gives the required Rayleigh bound.
\begin{proposition}\label{prop:filter-p1}
Let $V_h$ be the space in \cref{prop:p1}. Suppose a conforming, shape-regular, quasi-uniform coarse triangulation of maximum diameter $H_c$ has a $P_1$ space $V_{H_c}\subset V_h$. Then $F=V_{H_c}$ satisfies
\begin{align}\label{eq:filter-p1-bound}
\Lambda_F\leq CH_c^{-2},
\end{align}
where $C$ depends only on the coarse mesh regularity bounds and the dimension.
\end{proposition}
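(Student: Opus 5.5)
The bound follows from the standard inverse inequality on the coarse mesh, together with the conformity $V_{H_c}\subset V_h$. Since $X_h=V_h$ carries the $L^2(\Omega)$ inner product and $(A_hv,v)_{X_h}=a(v,v)=\|\nabla v\|_{L^2(\Omega)}^2$ for $v\in V_h$, the Rayleigh quotient in \cref{eq:filter-rayleigh} becomes
\begin{align*}
\Lambda_F=\sup_{0\ne v\in V_{H_c}}\frac{\|\nabla v\|_{L^2(\Omega)}^2}{\|v\|_{L^2(\Omega)}^2}.
\end{align*}
The fine-mesh structure no longer enters: no fine-scale norm appears in the quotient, and the fine grading \cref{eq:grading} is irrelevant. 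This is why the constant depends only on the coarse mesh bounds and $d$.

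Next, fix $v\in V_{H_c}$ and a coarse simplex $K_c$ with diameter $h_{K_c}$. Since $v|_{K_c}$ is affine, the local inverse estimate on shape-regular simplices \cite[Lemma~4.5.3]{Brenner2008} gives
\begin{align*}
\|\nabla v\|_{L^2(K_c)}\le C\rho_{K_c}^{-1}\|v\|_{L^2(K_c)}\le Ch_{K_c}^{-1}\|v\|_{L^2(K_c)}.
\end{align*}
This can be checked directly by mapping to the reference simplex and using equivalence of norms on $P_1$. The constant depends only on the shape-regularity bound and $d$. Quasi-uniformity gives $h_{K_c}\ge cH_c$ for every coarse element. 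Squaring and summing over the coarse elements then yields $\|\nabla v\|_{L^2(\Omega)}^2\le CH_c^{-2}\|v\|_{L^2(\Omega)}^2$, and taking the supremum gives \cref{eq:filter-p1-bound}.

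No step is a genuine obstacle. The only point needing care is that quasi-uniformity, not just shape regularity, is what turns the elementwise factor $h_{K_c}^{-2}$ into the global factor $H_c^{-2}$. Conformity of $V_{H_c}$ in $H_0^1(\Omega)$ is needed only so that the gradient is the elementwise gradient with no jump contributions.
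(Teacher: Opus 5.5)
Your proposal is correct and follows the paper's proof: you identify $\Lambda_F$ as the ratio $\|\nabla v\|_{L^2(\Omega)}^2/\|v\|_{L^2(\Omega)}^2$ over $V_{H_c}$, using the consistent $L^2$ inner product on $X_h$ and $(A_hv,v)_{X_h}=\|\nabla v\|_{L^2(\Omega)}^2$, and then apply the coarse-mesh inverse estimate. The only difference is that you spell out the elementwise inverse inequality and the role of quasi-uniformity in obtaining the global factor $H_c^{-2}$, which the paper states in one line.
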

\begin{proof}
The coarse-space inverse estimate gives $\|\nabla v\|_{L^2(\Omega)}^2\leq CH_c^{-2}\|v\|_{L^2(\Omega)}^2$ for $v\in V_{H_c}$. Since the physical inner product is the consistent $L^2$ product and $(A_hv,v)_{X_h}=\|\nabla v\|_{L^2(\Omega)}^2$, this proves \cref{eq:filter-p1-bound}. The fine mesh need only satisfy \cref{prop:p1}.
\end{proof}

For Cartesian differences, tensor-product linear interpolation gives the corresponding bound in the original diagonal mass.
\begin{proposition}\label{prop:filter-fd}
Consider \cref{eq:physicalFD} and a nested Cartesian coarse grid with steps $H_r=m_rh_r$, where $m_r\in\mathbb N$ divides $N_r$, $1\le r\le d$. Let $H_c=\max_rH_r$. Let $I_{H_c}^h$ prolong zero-boundary coarse nodal values by tensor-product piecewise linear interpolation, and set $F=\operatorname{Ran}I_{H_c}^h$. Then the discrete Rayleigh quotient of $F$ satisfies\vspace{-2mm}
\begin{align}\label{eq:filter-fd-bound}
\Lambda_F\le12\sum_{r=1}^dH_r^{-2}.
\end{align}
\end{proposition}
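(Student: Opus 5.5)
The plan is to bound the Rayleigh quotient of any $v=I_{H_c}^hw\in F$ directly. By the summation-by-parts form of \cref{eq:physicalFD}, already used in the proof of \cref{prop:fd},
\[
(A_hv,v)_{X_h}=\sum_{r=1}^d\sum_{\text{$r$-edges}}\Bigl(\prod_s h_s\Bigr)\Bigl(\frac{v_{i+e_r}-v_i}{h_r}\Bigr)^2 .
\]
This reduces the claim to a one-dimensional estimate, applied direction by direction, for the discrete $\ell^2$ norms of difference quotients versus nodal values of a tensor-product piecewise linear function. Since $v$ is a tensor product of one-dimensional linear interpolants, it suffices to prove two one-dimensional inequalities on a uniform fine grid of step $h$ nested in a coarse grid of step $H=mh$, both for zero-boundary coarse data.

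\emph{Difference quotients.} On a coarse cell the fine difference quotients of a linear interpolant are constant and equal to the coarse slope. Hence the fine discrete $H^1$ seminorm equals the coarse one exactly:
\[
h\sum_j|D v_j|^2=H\sum_J|D_Hw_J|^2 .
\]

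\emph{Mass.} For the fine nodal mass, the lumped mass of a linear function on a cell of length $H$ sampled at $m+1$ equispaced points dominates a multiple of $H(w_J^2+w_{J+1}^2)$. Indeed,
\[
h\sum_{k}\bigl((1-k/m)a+(k/m)b\bigr)^2\ge c\,H(a^2+b^2)
\]
uniformly in $m\ge1$, with $c$ computable (for $m=1$ one gets exactly the trapezoid value). Summing over cells gives $h\sum_j v_j^2\ge c' H\sum_J w_J^2$.

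\emph{Coarse inverse estimate.} The coarse grid satisfies $H\sum_J|D_Hw_J|^2\le 4H^{-2}\,H\sum_J w_J^2$, from $(a-b)^2\le2a^2+2b^2$. Combining this with the mass lower bound yields a one-dimensional ratio bound $C_1H^{-2}$.

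In $d$ dimensions, the $r$-th term of the stiffness involves difference quotients in direction $r$ of a tensor product. The transverse directions enter only through the nodal sums of the interpolated transverse factors, which appear identically in the numerator and the mass. Therefore the multidimensional ratio is bounded by $\sum_r C_1H_r^{-2}$. This tensorization is justified because $v$ is bilinear in separate variables, or more precisely because the quadratic forms factor as Kronecker products of one-dimensional forms, and Rayleigh quotients of Kronecker sums are controlled by the one-dimensional generalized eigenvalues. I would phrase the argument as follows: the stiffness is $\sum_r K_r\otimes\bigotimes_{s\ne r}M_s$ restricted to the range of $\bigotimes_s P_s$, and the mass is $\bigotimes_s M_s$. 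Then $P_r^TK_rP_r\le C_1H_r^{-2}P_r^TM_rP_r$ gives the claim.

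The main obstacle is obtaining the explicit constant $12$. One needs the sharp mass comparison; the naive bounds $4$ and $1/3$ give $12$ if the lumped mass of a linear function on a coarse cell is at least $\tfrac13H(a^2+b^2)$. I would verify this as follows. The continuous integral $\int$ of the linear function squared is $\tfrac H3(a^2+ab+b^2)$, and the endpoint-inclusive Riemann sum of a convex quadratic dominates the integral. With $a^2+ab+b^2\ge\tfrac12(a^2+b^2)$, the two halves combine across neighbouring cells to give $h\sum v_j^2\ge\tfrac13 H\sum_J w_J^2$. Checking that the endpoint weighting is handled correctly, so that each coarse node is shared by two cells, is where care is required.
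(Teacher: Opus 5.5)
Your proposal is correct and follows essentially the paper's route. The fine stiffness equals $\|v_H'\|_{L^2(0,L)}^2$, and your trapezoid-versus-integral observation is exactly the paper's identity $\|z_h\|_{X_h}^2=\|v_H\|_{L^2(0,L)}^2+\tfrac16h^2\|v_H'\|_{L^2(0,L)}^2$. Your factorization $12=4\cdot3$ through the coarse lumped mass is just a proof of the sharp one-dimensional $P_1$ inverse constant, and your Kronecker-product argument is a precise version of the paper's tensorization. The only thing to tidy is that the per-cell endpoint-inclusive sum in your ``Mass'' paragraph double counts coarse nodes: for $m=1$ it gives $H(a^2+b^2)$, which is twice the trapezoid value. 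Your final composite-trapezoid argument with zero boundary values handles this correctly.
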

\begin{proof}
We first consider the one-dimensional case on $(0,L)$, and write $H$ for the coarse step. Let $v_H$ be the continuous coarse piecewise linear function, and let $z_h$ denote its vector of values on the fine grid. Since the endpoint values vanish, summation over the fine intervals gives\vspace{-2mm}
$$
(A_hz_h,z_h)_{X_h}=\|v_H'\|_{L^2(0,L)}^2.\vspace{-2mm}
$$
The induced mass satisfies\vspace{-2mm}
$$
\|z_h\|_{X_h}^2=\|v_H\|_{L^2(0,L)}^2+\tfrac16h^2\|v_H'\|_{L^2(0,L)}^2.\vspace{-2mm}
$$
By the linear-element inverse estimate\vspace{-2mm}
$$
\|v_H'\|_{L^2(0,L)}^2\le12H^{-2}\|v_H\|_{L^2(0,L)}^2,\vspace{-2mm}
$$
we therefore obtain\vspace{-2mm}
$$
(A_hz_h,z_h)_{X_h}\le12H^{-2}\|z_h\|_{X_h}^2.\vspace{-2mm}
$$
This proves \cref{eq:filter-fd-bound} in one dimension.

In dimension $d$, the induced mass is the tensor product of the one-dimensional induced masses. Tensorizing the preceding quadratic-form bound with these masses in the other directions, and summing the resulting directional stiffness terms, yields \cref{eq:filter-fd-bound}.
\end{proof}

When the Cartesian coarse steps are comparable, both realizations have $\Lambda_F\leq C H_c^{-2}$. Thus, on mesh hierarchies with compatible coarse levels, \cref{eq:mainS} gives the sufficient choice
\begin{align}\label{eq:filter-coarse-scale}
H_c\geq C(T,\vartheta)\max\{h^{2/5},(\delta t)^{1/5}\},
\end{align}
with the constant sufficiently large and $S_{h,\delta t}$ satisfying the second condition in \cref{eq:filter-condition}. This sufficient scale is generally larger than a fixed multiple of the fine scale.
\begin{remark}
The spectral space spanned by eigenvectors of $A_h$ with eigenvalues at most $\vartheta d_0S/T$ also satisfies the first condition in \cref{eq:filter-condition}. The coarse-space constructions above avoid computing these eigenvectors.
\end{remark}

\subsection{Weak convergence of reconstructed controls to continuous null controls}\label{sec:control-convergence}

We finally show that the discrete controls constructed above converge, along weakly convergent subsequences, to continuous null controls.

Fix $y_0\in H$ and a real potential $b\in L^\infty(\mathcal Q)$ with $\|b\|_{L^\infty(\mathcal Q)}\leq M$. We discretize these data by projection and averaging. For $P_1$ finite elements, take\vspace{-2mm}
\begin{align}\label{eq:conv-data-fe}
y_{0,h}=\Pi_hy_0,\qquad b^n(x)=\frac1{\delta t}\int_{t_{n-1}}^{t_n}b(t,x)\,dt.
\end{align}
For differences, define the interior dual cells\vspace{-2mm}
\begin{align}\label{eq:conv-dual-cells}
D_i=\prod_{r=1}^d(x_{r,i_r}-h_r/2,x_{r,i_r}+h_r/2),
\qquad |D_i|=\omega_i,
\end{align}
and use\vspace{-2mm}
\begin{align}\label{eq:conv-data-fd}
(y_{0,h})_i=\frac1{\omega_i}\int_{D_i}y_0(x)\,dx,\qquad
b_i^n=\frac1{{\delta t}\omega_i}\int_{t_{n-1}}^{t_n}\int_{D_i}b(t,x)\,dx\,dt.
\end{align}
Define $E_h:X_h\to H$ by $E_h=R_h$ for finite elements. For differences, let $E_hz=z_i$ on $D_i$ and zero on the uncovered boundary strip. Define $K_h:U_h\to H$ as zero extension from $\omega$ for finite elements and as $E_hB_h$ for differences. These reconstructions preserve the physical norms:
\begin{align}\label{eq:conv-pairings}
(E_hz,E_hw)_H=(z,w)_{X_h},\qquad
\|K_hv\|_{L^2(\Omega)}=\|v\|_{U_h}.
\end{align}
They also preserve the control pairing $
(K_hv,E_hz)_H=(v,B_h^*z)_{U_h}$.
For $t\in(t_{n-1},t_n]$, set\vspace{-3mm}
$$
U_{h,{\delta t}}(t)=K_hu^n,\qquad
Y_{h,{\delta t}}(t)=R_hy^n,\qquad
\widetilde Y_{h,{\delta t}}(t)=E_hy^n.\vspace{-2mm}
$$
For differences, fix an open set $\omega_\sharp$ with $\omega_0\Subset\omega_\sharp\Subset\omega$ and restrict the active nodes to $\omega_\sharp$, extending the controls by zero to $U_h$. Their cellwise reconstructions are then supported in $\omega$ for sufficiently small $h$.

Denote by $V'=H^{-1}(\Omega)$ the dual of $V=H_0^1(\Omega)$. Let $A:V\to V'$ be the linear operator defined by
$$
\langle Av,w\rangle_{V',V}=a(v,w)=\int_\Omega\nabla v\cdot\nabla w\,dx,
\qquad v,w\in V,
$$
where $\langle\cdot,\cdot\rangle_{V',V}$ denotes the duality pairing. Since $H$ is embedded in $V'$ through the $L^2(\Omega)$ inner product, this operator extends the Dirichlet Laplacian $A=-\Delta_D$ with domain $D(A)=H^2(\Omega)\cap H_0^1(\Omega)$. Consequently, the energy solution belongs to $L^2(0,T;V)\cap H^1(0,T;V')\subset C([0,T];H)$.

With this functional setting in place, we may pass from the discrete controls to the continuous equation. The reconstructed discrete controls have weakly convergent subsequences whose limits drive the continuous state to zero at time $T$. This is made precise in the following corollary.
\begin{corollary}\label{prop:control-convergence}
Fix a realization in \Cref{sec:realizations}, with the data in \cref{eq:conv-data-fe} or \cref{eq:conv-data-fd}. Then there exists $C(T,M)>0$ such that the following holds. For every sequence of admissible mesh sizes $h_j$ and time steps $\delta t_j=T/N_j$ satisfying $\lim\limits_{j\to\infty}h_j=\lim\limits_{j\to\infty}\delta t_j=0$, one can choose, for all sufficiently large $j$, controls satisfying \cref{eq:mainC,eq:main-terminal} whose reconstructions $U_{h_j,\delta t_j}$ vanish outside $\omega$ and form a relatively weakly compact family in $L^2(0,T;H)$.
Moreover, every weak accumulation point of this family is the zero extension of a control $u\in L^2(\mathcal Q_\omega)$ for which the energy solution of
\begin{align}\label{eq:conv-continuous-system}
\partial_t y+Ay+b(t,\cdot)y=\mathbf1_\omega u\quad\text{in }V',
\qquad y(0)=y_0,
\end{align}
satisfies $y(T)=0$. The limiting control satisfies
\begin{align}\label{eq:conv-control-bound}
\|u\|_{L^2(\mathcal Q_\omega)}\le C(T,M)\|y_0\|_{L^2(\Omega)}.
\end{align}
\end{corollary}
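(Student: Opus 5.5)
The argument is a compactness and consistency passage to the limit in the discrete duality identity. The first step is to verify that the discretized data satisfy \cref{eq:physicalconditions} for all large $j$, which follows from the remark after \cref{prop:physicalC} since $S_{h_j,\delta t_j}\to\infty$ and $\delta t_jS_{h_j,\delta t_j}\to0$. The potentials also satisfy $|b^n|\le M$ and $|b_i^n|\le M$, because averaging preserves the bound. Moreover, $\|y_{0,h}\|_{X_h}\le\|y_0\|_{L^2(\Omega)}$: for finite elements this is the $L^2$ projection, and for differences we use \cref{eq:conv-pairings} together with Jensen on dual cells. For differences, the active nodes are restricted to $\omega_\sharp$. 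I would rerun \cref{prop:physicalO,prop:physicalC} with $\omega_\sharp$ in place of $\omega$; this is allowed since $\omega_0\Subset\omega_\sharp$. \Cref{prop:physicalC} then gives controls $u_j$ with $\|U_{h_j,\delta t_j}\|_{L^2(0,T;H)}=\|u_j\|_{\ell^2_{\delta t}(U_h)}\le C(T,M)\|y_0\|$, using \cref{eq:conv-pairings}. The terminal error satisfies $\|y^N_j\|_{X_h}\le Ce^{-d_0S}\|y_0\|\to0$. For differences, the supports lie in $\omega$ once $h_j$ is small. Bounded sets in $L^2$ are weakly relatively compact, and weak limits keep their support in $\overline\omega$ and satisfy \cref{eq:conv-control-bound} by weak lower semicontinuity.

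To identify the limit, fix a weakly convergent subsequence $U_j\rightharpoonup\mathbf 1_\omega u$. Rather than passing to the limit in the state, I would test against continuous adjoints. Take $p_T\in C_c^\infty(\Omega)$ and let $p$ solve the continuous backward adjoint $-\partial_tp+Ap+bp=0$, $p(T)=p_T$. The continuous duality identity gives
\begin{align*}
(y(T),p_T)_H-(y_0,p(0))_H=\int_0^T(\mathbf 1_\omega u,p)_H\,dt.
\end{align*}
On the discrete side, let $p_j$ solve \cref{eq:adjoint} with terminal datum $p_{T,j}=\Pi_hp_T$, or the nodal average for differences. \Cref{eq:trueD} then gives $(y^N_j,p_{T,j})-(y_{0,h},p^0_j)=\delta t\sum_n(u^n,B_h^*p_j^{n-1})$. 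The left-hand terminal term tends to zero by \cref{eq:main-terminal} and \cref{eq:adjointenergy}. It remains to show that the reconstructed discrete adjoints converge strongly in $L^2(0,T;H)$ to $p$ (with the index shift $n-1$ harmless), and that $E_hp^0_j\to p(0)$ weakly, or strongly, in $H$. Given these, \cref{eq:conv-pairings} and the control pairing turn the right-hand side into $(U_j,\text{reconstructed }p_j)_{L^2(0,T;H)}$, and weak times strong convergence gives $\int_0^T(\mathbf 1_\omega u,p)$. Then $(y(T),p_T)=0$ for a dense set of $p_T$, so $y(T)=0$.

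The main obstacle is the strong convergence of the fully discrete adjoint with the rough potential $b\in L^\infty$, uniformly under independent $h,\delta t\to0$. I would prove it by an energy-compactness argument. Backward-Euler stability gives uniform bounds in $\ell^\infty(X_h)$ and $\ell^2$ of the discrete energy $(A_hp,p)$. For differences, the exact stiffness identity \cref{eq:fdstiffness} turns this into an $L^2(0,T;H^1_0)$ bound on $R_hp$, and $\|R_hz-E_hz\|\le Ch\|\nabla R_hz\|$ lets the two reconstructions be exchanged. A discrete $V'$ bound on the difference quotients, obtained by testing with $J_h$ of smooth functions, then allows a discrete Aubin--Lions (Simon) compactness argument for the interpolants. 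This yields strong $L^2(0,T;H)$ convergence along subsequences. To identify the limit as the weak solution, I would pass to the limit in the scheme tested with $\Pi_h\varphi$ or nodal interpolants of smooth $\varphi$. The potential term converges because $b^n\to b$ in $L^2_{\rm loc}$ by averaging (Lebesgue differentiation and density) while $p_j$ converges strongly. Uniqueness of the limit solution upgrades this to convergence of the whole sequence. The same energy argument gives weak convergence of $p^0_j$, which suffices since $y_{0,h}$ reconstructs strongly to $y_0$.
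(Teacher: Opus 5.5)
Your argument is correct, but it identifies the limit by duality, whereas the paper argues on the primal side.

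\textbf{The paper's route.} It never passes to the limit in an adjoint.
\begin{enumerate}
\item It bounds the reconstructed states $Y_{h,\delta t}$ uniformly in $L^2(0,T;V)$, by testing \cref{eq:forward} with $\delta t\,y^n$ and using the exact stiffness identity.
\item It extracts a subsequence along which controls and states converge weakly.
\item It tests the forward scheme with interpolants $\phi_h^{n-1}$ of fixed smooth functions.
\end{enumerate}
Every term is then linear in $(y,u)$ and paired with an explicit, strongly convergent quantity: $R_h\Phi^-_{h,\delta t}\to\phi$ in $L^2(0,T;V)$, $b_{h,\delta t}E_h\Phi^-_{h,\delta t}\to b\phi$ in $L^2(\mathcal Q)$, and the reconstructed time quotient of $\phi_h$. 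The pairing $(y^N,\phi_h^N)_{X_h}$ vanishes in the limit by \cref{eq:main-terminal}. The conditions $y(0)=y_0$ and $y(T)=0$ are read off from the limit identity once $\partial_t y\in L^2(0,T;V')$ gives $y\in C([0,T];H)$. Weak convergence therefore suffices throughout, and no compactness in time is needed.

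\textbf{Your route.} The weakly convergent controls are paired with the discrete adjoints $E_hp_j^{n-1}$, which are not explicit interpolants. This forces you to prove strong $L^2(0,T;H)$ convergence of the fully discrete adjoint with rough potential, which requires:
\begin{itemize}
\item the energy bound;
\item a dual bound on the difference quotients. Note that testing with $J_h$ of smooth functions gives a bound in the dual of $H^2\cap H_0^1$, using only the unweighted approximation properties of $Q_h$, rather than in $V'$; this is still enough for Aubin--Lions;
\item Aubin--Lions or Simon compactness;
\item identification of the limit, including its terminal value $p_T$, together with uniqueness;
\item a separate identification of the weak limit of $E_hp_j^0$, via the endpoint term produced by summation by parts.
\end{itemize}
All of these go through as you sketch.

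\textbf{Comparison.} Your route has two advantages: the limit state is the energy solution with $y(0)=y_0$ by construction, so no limit identification for the states is needed, and you obtain convergence of the adjoint scheme as a by-product. The price is a genuinely heavier analytic step, which the paper avoids by keeping every nonsmooth sequence in a linear weak--strong pairing with explicit test functions.
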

\begin{proof}
We use the standard compactness argument for control approximation; see \cite[Theorem~2.5]{Boyer2013} for the semidiscrete setting. The proof proceeds in four steps: uniform bounds for the controls, weak compactness of the states, identification of the limit equation, and identification of the endpoint values together with the control bound.

\smallskip

Since the conditions in \cref{eq:physicalconditions} hold eventually along the given sequence, \cref{prop:physicalC} applies. In the finite difference case, restricting the controls to $\omega_\sharp$ preserves \cref{ass:observation} by the local norm comparison in \cref{sec:fd-verification}. Consequently, \cref{eq:mainC,eq:conv-pairings,eq:conv-data-consistency} show that the controls $U_{h,\delta t}$ are uniformly bounded in $L^2(0,T;H)$.

\smallskip

Next, we test \cref{eq:forward} with $\delta t y^n$, sum over $n$, and use \cref{eq:forward-energy-bound,eq:conv-stiffness}. This yields a uniform bound for $Y_{h,\delta t}$ in $L^2(0,T;V)$. Therefore, starting from any weakly convergent control subsequence and passing to a further subsequence if necessary, relabeled by $j$, we obtain
\begin{align*}
\lim_{j\to\infty}U_{h_j,\delta t_j}=\mathbf1_\omega u
\quad\text{in the weak topology of }L^2(0,T;H).
\end{align*}
Along the same subsequence, the states satisfy
\begin{align*}
\lim_{j\to\infty}Y_{h_j,\delta t_j}=y
\quad\text{in the weak topology of }L^2(0,T;V).
\end{align*}
By \cref{eq:conv-consistency}, the modified states satisfy
\begin{align*}
\lim_{j\to\infty}\widetilde Y_{h_j,\delta t_j}=y
\quad\text{in the weak topology of }L^2(0,T;H).
\end{align*}

\smallskip

It remains to identify the limit equation and the endpoint values. For the smooth tests $\phi$ and their interpolants $\phi_h^n$ from \cref{sec:control-consistency}, we test \cref{eq:forward} with $\delta t\phi_h^{n-1}$ and sum by parts. Since $\lim\limits_{h,\delta t\to0}S_{h,\delta t}=+\infty$, \cref{eq:main-terminal} and the uniform bounds for $y_{0,h}$ and $\phi_h^N$ give
\begin{align*}
\lim_{h,\delta t\to0}(y^N,\phi_h^N)_{X_h}=0,
\qquad N=T/\delta t.
\end{align*}
This limit holds in particular along the chosen subsequence.
Using the consistency properties from \cref{sec:control-consistency}, we therefore obtain
\begin{align}\label{eq:conv-limit-identity}
-\int_0^T(y,\partial_t\phi)_H\,dt
+\int_0^T\bigl(a(y,\phi)+(by,\phi)_H\bigr)\,dt
=(y_0,\phi(0))_H+\int_0^T\int_\omega u\phi\,dx\,dt.
\end{align}

\smallskip

Interior time tests give
\begin{equation}\label{9.26-eq16}
\partial_t y=\mathbf1_\omega u-Ay-by\in L^2(0,T;V').
\end{equation}
To see this, let $\psi\in C_c^\infty(0,T)$ and $w\in V$, and take $\phi(t,x)=\psi(t)w(x)$ in \eqref{eq:conv-limit-identity}. Since $\psi$ vanishes near $0$ and $T$, the endpoint term $(y_0,\phi(0))_H$ drops out, and we obtain
$$
-\int_0^T\psi'(t)\,(y(t),w)_H\,dt
+\int_0^T\psi(t)\bigl(a(y(t),w)+(b(t)y(t),w)_H\bigr)\,dt
=\int_0^T\psi(t)\int_\omega u(t,x)w(x)\,dx\,dt.
$$
This is the distributional formulation of \eqref{9.26-eq16}. Indeed, each term on the right-hand side of \eqref{9.26-eq16} lies in $L^2(0,T;V')$: the control term $\mathbf1_\omega u$ lies in $L^2(0,T;H)\subset L^2(0,T;V')$, the term $Ay$ lies in $L^2(0,T;V')$ since $y\in L^2(0,T;V)$ and $A:V\to V'$ is bounded, and the potential term $by$ lies in $L^2(0,T;V')$ by the boundedness of $b$ and the embedding $H\hookrightarrow V'$. Hence \eqref{9.26-eq16} holds.

Consequently, $y\in C([0,T];H)$, so the unrestricted endpoint values of $\phi$ in \cref{eq:conv-limit-identity} identify $y(0)=y_0$ and $y(T)=0$. Finally, the control bound in \cref{eq:conv-control-bound} follows from weak lower semicontinuity.
\end{proof}

\begin{remark}\label{rem:spectral-comparison}
The independent refinement in \cref{prop:control-convergence} follows from the transferred observability estimate for bounded real space-time potentials. To relate this to the spectral framework in~\cite{BHLR11}, suppose that the uniform partial spectral inequality holds up to frequencies of order $h^{-\beta_{\mathrm{sp}}}$, with $\beta_{\mathrm{sp}}>0$, and set
$
\eta=\max\{h,(\delta t)^{1/\beta_{\mathrm{sp}}}\}$.
With this choice, the same discrete system satisfies the spectral hypothesis on the smaller range of order $\eta^{-\beta_{\mathrm{sp}}}$, and moreover $\delta t\le\eta^{\beta_{\mathrm{sp}}}$. Thus, in the autonomous backward-Euler case, Theorems~3.5 and~3.8 of \cite{BHLR11} apply with the effective cutoff parameter $\eta$, upon choosing $\gamma=\beta_{\mathrm{sp}}$ and $C_T=1$. This yields an independent-refinement consequence of their results. We emphasize that $\eta$ only changes the cutoff used in the argument; it does not alter the underlying spatial discretization.
\end{remark}
\begin{remark}\label{rem:filtered-control-limit}
For fixed $0<\vartheta<1$, the compactness argument in \cref{prop:control-convergence} applies equally well to the controls constructed in \cref{cor:filtered-control}, under the same data approximations and consistency assumptions. Indeed, the bounds in \cref{eq:filtered-control,eq:filtered-terminal} provide uniform control norms together with
\begin{align*}
\lim_{h,\delta t\to0}\|y^N\|_{X_h}=0,
\qquad N=T/\delta t.
\end{align*}
Consequently, every weak accumulation point of their reconstructions is a continuous null control.
\end{remark}

\vspace{-2mm}

\section{Numerical experiments for heat control}\label{sec:numerics}

We now illustrate the  control construction of \cref{prop:physicalC} on the consistent-mass $P_1$/backward Euler scheme. The experiments examine three aspects of the construction: control effectiveness on nonuniform meshes, independent spatial and temporal refinement, and the balance between terminal accuracy and iterative cost. In each case, the computed control is also applied, without re-optimization, in an independently refined approximation of the continuous heat equation.

\subsection{Model and numerical method}

We take $\Omega=(0,1)^2$, $T=0.1$, and $\omega=(0.4,0.6)^2$, and impose homogeneous Dirichlet boundary conditions on\vspace{-2mm}
$$
\partial_t y-\Delta y+b(t,x)y=\mathbf1_\omega U.
$$
The main experiments use $b=0$ and two initial data: the sine datum (S), $y_{0,\mathrm S}(x)=2\sin(\pi x_1)\sin(\pi x_2)$, and the localized datum (L),\vspace{-2mm}
$$
y_{0,\mathrm L}(x)=c_{\mathrm L}
\begin{cases}
\exp\bigl(-1/(1-|x-x_c|^2/r^2)\bigr),& |x-x_c|<r,\\
0,& |x-x_c|\geq r.
\end{cases}\vspace{-2mm}
$$
Here $x_c=(0.25,0.70)$, $r=0.1$, and $c_{\mathrm L}$ normalizes the $L^2(\Omega)$ norm to one. Both initial data therefore have unit norm, and we use their $L^2$ projections onto $V_h$.

For the nodal basis $\{\phi_i\}_{i=1}^{N_h}$, $N_h=\dim V_h$, define
\begin{align*}
\mathsf M_{ij}=\int_\Omega\phi_i\phi_j\,dx,\qquad
\mathsf K_{ij}=\int_\Omega\nabla\phi_i\cdot\nabla\phi_j\,dx,\qquad
\mathsf W_{ij}=\int_\omega\phi_i\phi_j\,dx.
\end{align*}
The observation matrix is assembled on the exact intersections of the triangles with $\omega$. We represent the state by $y_h^n=\sum\limits_{i=1}^{N_h}\mathbf y_i^n\phi_i$ and the control by $u_h^n=\big(\sum\limits_{i=1}^{N_h}\mathbf u_i^n\phi_i\big)\big|_\omega$, and write $\|\mathbf v\|_{\mathsf M}^2=\mathbf v^T\mathsf M\mathbf v$. For $\varepsilon>0$, the penalized problem minimizes\vspace{-2mm}
$$
\frac{\delta t}{2}\sum_{n=1}^N(\mathbf u^n)^T\mathsf W\mathbf u^n
+\frac{1}{2\varepsilon}\|\mathbf y^N\|_{\mathsf M}^2,\vspace{-2mm}
$$
subject to \cref{intro:discrete-model}. We solve the dual problem by mass-preconditioned conjugate gradients, with sparse direct solves for the time steps. With the sign convention of \cref{prop:physicalC}, $u_h^n=-p_h^{n-1}|_\omega$, where $p_h^n=\sum\limits_{i=1}^{N_h}\mathbf p_i^n\phi_i$ is the computed adjoint. From the forward state driven by the computed control, we evaluate the primal--dual gap\vspace{-2mm}
$$
\mathrm{gap}=\frac{\|\mathbf y^N-\varepsilon\mathbf p_T\|_{\mathsf M}^2}{2\varepsilon},\vspace{-2mm}
$$
where $\mathbf p_T=\mathbf p^N$. We stop the iterations when\vspace{-2mm}
\begin{align}\label{eq:num-gap}
\sqrt T e^{2MT}\sqrt{2\,\mathrm{gap}}\leq10^{-9},
\end{align}
where $M$ is the potential bound. Work is measured by the number of forward and adjoint time-step linear solves, including stopping checks.

Let $U_{h,\delta t}$ be the zero extension of $u_h^n$ on $(t_{n-1},t_n]$. We compute a reference terminal state $y_{\rm ref}(T;U_{h,\delta t},y_0)$ by sine-Galerkin propagation of the same control. The discrete terminal error is\vspace{-2mm}
$$
E_d=\frac{\|y_h^N\|_{L^2(\Omega)}}{\|y_0\|_{L^2(\Omega)}},\vspace{-2mm}
$$
and the reference terminal error is\vspace{-3mm}
$$
E_r=\frac{\|y_{\rm ref}(T;U_{h,\delta t},y_0)\|_{L^2(\Omega)}}{\|y_0\|_{L^2(\Omega)}}.\vspace{-2mm}
$$
We also report the control cost\vspace{-3mm}
$$
C_u =\frac{\|U_{h,\delta t}\|_{L^2((0,T)\times\omega)}}{\|y_0\|_{L^2(\Omega)}}.\vspace{-2mm}
$$
The control is projected onto tensor-product sine modes by integration over the intersections with $\omega$ and is kept constant on its original time intervals. For $b=0$, modal time propagation is exact. We compare 12, 24, 48, and, when needed, 96 modes per coordinate, together with increased quadrature orders. In the three main figures, the largest empirical refinement indicator, measured by the change in the reference terminal state relative to $E_r$, is $2.27\%$.

With $h=\max_{K\in\mathcal T_h}\operatorname{diam}K$, we choose, unless another value is specified, \vspace{-2mm}
\begin{equation}\label{eq:num-penalty}
\varepsilon_{\exp}(h,\delta t)
=\exp\!\bigl[-\min\{h^{-4/5},(\delta t)^{-2/5}\}\bigr].\vspace{-2mm}
\end{equation}
This choice follows the scale of the theoretical penalty $e^{-2d_0S_{h,\delta t}}$, with the experimental coefficient fixed at one after preliminary runs.

\subsection{Control effectiveness on nonuniform meshes}

We first examine whether the control remains effective on nonuniform meshes. To this end, we use two families of Delaunay meshes: perturbed Cartesian sites (F2) and a fixed smooth coordinate grading with density $1+2\exp(-((x-1/2)/0.18)^2)$, followed by the same perturbation (F3). The perturbations are at most $0.18$ of the coordinate spacing. The four nominal spatial resolutions are $n_{\mathrm{sp}}\in\{8,16,32,64\}$, with $N=128$ time steps. On these meshes, $\ell_K=h_K$ satisfies \cref{eq:grading}, with vertex-neighbor ratios below $2.04<3$.
\begin{figure}[tb]
\centering\includegraphics[width=\linewidth]{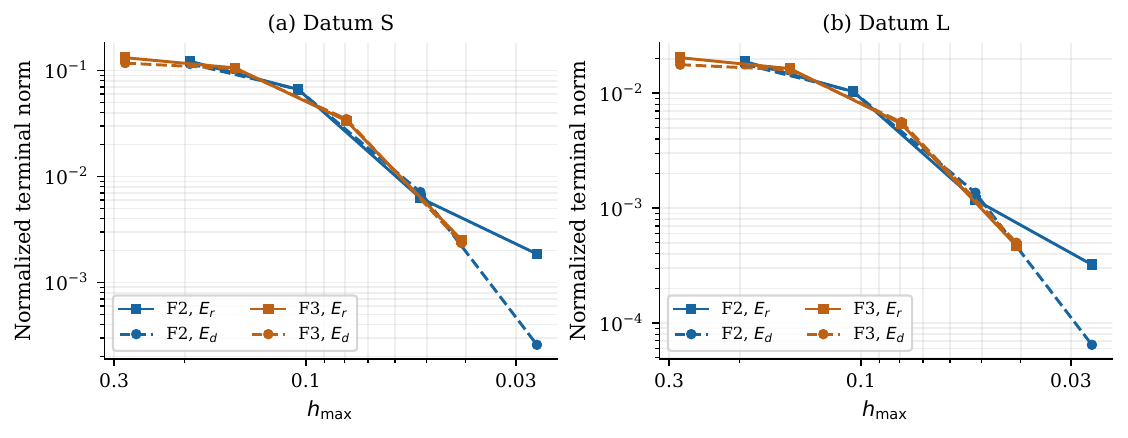}
\caption{\small  Control effectiveness on F2 and F3 meshes for (a) the sine datum S and (b) the localized datum L, with $T=0.1$, $N=128$, $b=0$, and \cref{eq:num-penalty}. Solid squares show the reference terminal norm $E_r$; dashed circles show the discrete terminal norm $E_d$. The horizontal coordinate is the measured maximum element diameter.}
\label{fig:num-effectiveness}
\end{figure}
Both terminal norms decrease along the reported refinements in \cref{fig:num-effectiveness}. To compare the controlled state with natural heat decay, let $E_{\rm free}=\|y_{\rm ref}(T;0,y_0)\|_{L^2(\Omega)}/\|y_0\|_{L^2(\Omega)}$. Its value is $e^{-2\pi^2T}=0.1389111$ for S and $0.0214074$ for L. At the finest reported F2/F3 levels, $E_r/E_{\rm free}$ lies between $0.0134$ and $0.0219$, while the control norms range from $0.708$ to $5.314$; see \cref{tab:num-effectiveness}. At a common nominal resolution, F2 and F3 have different measured diameters and hence different penalties, so this comparison concerns refinement rather than equal-accuracy efficiency.
\begin{table}[tb]
\centering\small
\caption{Terminal accuracy and control cost at the finest F2/F3 levels with $N=128$, and at the finest slow-time level with $N=53$. All rows use \cref{eq:num-penalty}; slow denotes F2 with \cref{eq:num-slow}.}
\label{tab:num-effectiveness}
\begin{tabular}{lrrrrr}
\toprule
Mesh/data & $h_{\max}$ & $N$ & $E_r$ & $C_u$ & $E_r/E_{\rm free}$ \\
\midrule
F2/L & 0.0266 & 128 & $3.22\times10^{-4}$ & 1.024 & $1.50\times10^{-2}$ \\
F2/S & 0.0266 & 128 & $1.86\times10^{-3}$ & 5.314 & $1.34\times10^{-2}$ \\
F3/L & 0.0410 & 128 & $4.69\times10^{-4}$ & 0.708 & $2.19\times10^{-2}$ \\
F3/S & 0.0410 & 128 & $2.53\times10^{-3}$ & 4.148 & $1.82\times10^{-2}$ \\
slow/L & 0.0266 & 53 & $8.65\times10^{-4}$ & 0.718 & $4.04\times10^{-2}$ \\
slow/S & 0.0266 & 53 & $5.11\times10^{-3}$ & 4.186 & $3.68\times10^{-2}$ \\
\bottomrule
\end{tabular}
\end{table}

A complementary experiment with local refinement in dyadic rings keeps $h_{\max}=0.0625$, $N=256$, and $\varepsilon=1.02097\times10^{-4}$ fixed, while increasing the global diameter contrast from $1.41$ to $90.51$, with vertex grading ratio two. The measured control norms vary only from $2.9233$ to $2.9254$, and the reference terminal norms from $0.01478$ to $0.01483$. Thus, when the mesh is refined locally at a fixed outer scale, both the measured control cost and the terminal norm remain nearly constant.

\subsection{Independent spatial and temporal refinement}

We next test whether the control remains accurate under independent spatial and temporal refinement. To this end, we combine the four F2 spatial levels with $N=16,32,64,128,256$. \Cref{fig:num-independent} compares $E_r$ for S with a fixed penalty $\varepsilon=10^{-6}$ and with \cref{eq:num-penalty}.
\begin{figure}[tb]
\centering\includegraphics[width=0.9\linewidth]{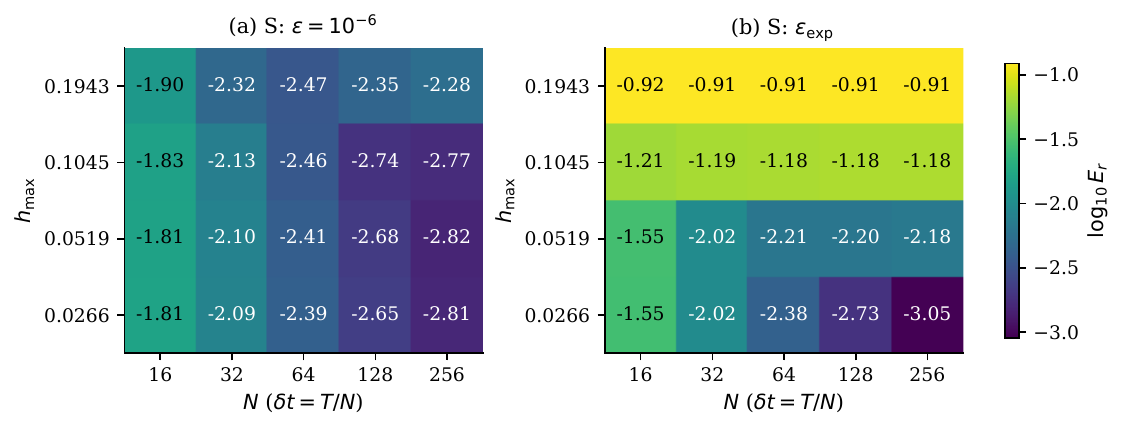}
\caption{\small  Independent spatial and temporal refinement for S on F2, with $T=0.1$ and $b=0$: (a) fixed $\varepsilon=10^{-6}$; (b) \cref{eq:num-penalty}. The entries are $\log_{10}E_r$ on a common color scale. Rows correspond to measured $h_{\max}$ and columns to $N$, with $\delta t=0.1/N$.}
\label{fig:num-independent}\vspace{-2mm}
\end{figure}
For the fixed penalty on the finest spatial mesh, increasing $N$ from 16 to 256 reduces $E_r$ from $1.55\times10^{-2}$ to $1.56\times10^{-3}$, whereas $E_d$ changes from $2.17\times10^{-3}$ to $1.46\times10^{-3}$. Thus, a small discrete terminal state can coexist with a substantially larger terminal state in the reference PDE. On the coarsest mesh, $E_r$ is not monotone in $N$. For the exponential penalty, refinement also changes the optimization problem, so the observed trends combine discretization and regularization effects.

We also consider the slower temporal refinement\vspace{-2mm}
\begin{align}\label{eq:num-slow}
N(h)=\max\{12,\lceil4[\log(1/h)]^2\rceil\},
\end{align}
using the natural logarithm and the measured diameter. This gives $N=12,21,36,53$. At the finest level, the S reference terminal norm is $5.11\times10^{-3}$, compared with $1.86\times10^{-3}$ for $N=128$. This illustrates independent refinement along a path distinct from the parabolic balance, with a different accuracy at the tested resolutions.

\vspace{-2mm}

\subsection{Penalty selection and computational cost}

Finally, we examine how the penalty parameter governs the trade-off between terminal accuracy and computational cost. We take $(n_{\mathrm{sp}},N)=(16,32)$ and $(32,64)$, and vary $\varepsilon\in\{10^{-2},10^{-4},\ldots,10^{-12}\}$. \Cref{fig:num-penalty} compares the discrete and reference terminal norms and the number of time-step solves for S. Across the S/L scans, six configurations at the smallest penalties hit the iteration limit before satisfying \cref{eq:num-gap}; the unconverged S iterates are marked by crosses.
\begin{figure}[tb]
\centering\includegraphics[width=0.9\linewidth]{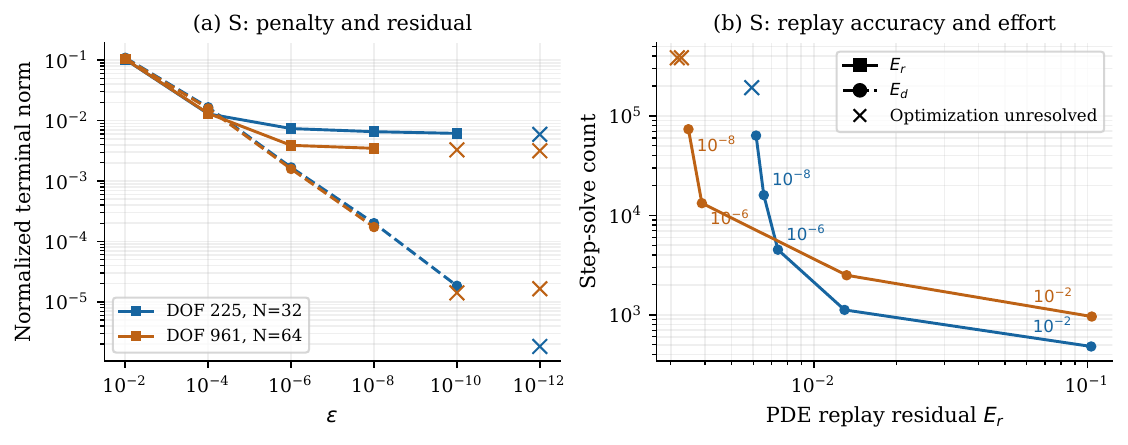}
\caption{\small  Penalty scans for S on F2: 225 state degrees of freedom with $N=32$, and 961 with $N=64$. (a) Discrete terminal norm $E_d$ (dashed circles) and reference terminal norm $E_r$ (solid squares); the penalty decreases to the right. (b) Number of forward and adjoint time-step solves versus $E_r$. Crosses mark iterates that did not satisfy \cref{eq:num-gap} and are excluded from the connecting curves.}
\label{fig:num-penalty}
\end{figure}

On the 225-degree-of-freedom mesh, reducing $\varepsilon$ from $10^{-6}$ to $10^{-10}$ decreases $E_d$ by a factor of $91.3$, but decreases $E_r$ only from $7.392\times10^{-3}$ to $6.151\times10^{-3}$, an improvement of $16.8\%$. In contrast, the number of time-step solves rises from 4512 to 63392. On the 961-degree-of-freedom mesh, the change from $10^{-6}$ to $10^{-8}$ decreases $E_d$ by a factor of $9.18$ and $E_r$ by $10.7\%$, while the solve count rises from 13248 to 73408. We note that these counts measure iterative effort rather than equal-cost work across different meshes.
The penalty therefore controls the discrete target more directly than the accuracy attained by the resulting control in the continuous equation. Once the latter changes only slightly, further reductions of the penalty mainly increase the iterative cost. The experiments distinguish this effect from the improvements obtained by spatial and temporal refinement.
\FloatBarrier

\appendix

\section{Verification of the continuous analytic assumptions}\label[appendix]{sec:continuous}

We first give a sufficient class of spatial operators for arbitrary $m\geq1$, and then verify all continuous analytic assumptions for the Dirichlet Laplacian used in \Cref{sec:realizations}.

\subsection{Symmetric coercive spatial operators}

Throughout this subsection, weighted estimates are understood at each fixed time.

We begin with a sufficient class of spatial operators. The following proposition shows that symmetric coercive differential forms with full elliptic regularity satisfy the spatial analytic assumptions.
\begin{proposition}\label{prop:spatial-operators}
Let $m\ge1$ be a fixed integer, set $V=H_0^m(\Omega)$, and let
\begin{align}\label{eq:coefficientform}
a(u,v)=\sum_{|\alpha|,|\beta|\le m}
\int_\Omega a_{\alpha\beta}(x)D^\beta u\,D^\alpha v\,dx,
\end{align}
where the coefficients are real, satisfy $a_{\alpha\beta}=a_{\beta\alpha}$, and belong to $W^{2m,\infty}(\Omega)$. Suppose that $a$ is bounded and coercive on $H_0^m(\Omega)$ with uniform positive constants, and that its associated operator has domain $D(A)=H^{2m}(\Omega)\cap H_0^m(\Omega)$ and satisfies\vspace{-2mm}
$$
\|v\|_{H^{2m}(\Omega)}\le C\|Av\|_{L^2(\Omega)}.
$$
Then \cref{ass:form,ass:localization,ass:weighted} hold for every fixed smooth spatial weight $\rho$ and every fixed finite $\tau_*$. The constants depend additionally on the $W^{2m,\infty}(\Omega)$ norms of $a_{\alpha\beta}$.
\end{proposition}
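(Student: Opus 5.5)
I will verify \cref{ass:form,ass:localization,ass:weighted} in turn, using only the product (Leibniz) rule, the weight derivative bounds \cref{eq:allowed-weight-derivatives}, and Sobolev interpolation on the fixed Lipschitz domain.

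\textbf{Assumption (A1).} Since $V=H_0^m(\Omega)$ is closed in $H^m(\Omega)$ and contains $C_c^\infty(\Omega)$, it is dense in $H$. Boundedness and coercivity are hypotheses, and so is the regularity bound \cref{eq:ellreg}. The Friedrichs/Kato representation then gives $D(A^{1/2})=V$ together with $\|A^{1/2}v\|^2=a(v,v)$.

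\textbf{Assumption (A2).} For any allowed $\sigma=q^\tau\theta^{\pm1}$, the functions $\sigma^{\pm1}$ are smooth on $\overline\Omega$ at fixed $t$. Multiplication by them therefore preserves $H_0^m(\Omega)$ and $H^{2m}(\Omega)\cap H_0^m(\Omega)$, which equals $D(A)$ by hypothesis. For the commutator I will write $A$ in divergence form,
\[
Au=\sum(-1)^{|\alpha|}D^\alpha\bigl(a_{\alpha\beta}D^\beta u\bigr),
\]
which is legitimate because $a_{\alpha\beta}\in W^{2m,\infty}$ and $u\in H^{2m}$. Applying Leibniz to $A(\sigma w)-\sigma Aw$ with $w=\sigma^{-1}v$, every term carries at least one derivative on $\sigma$. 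Each such term has the form (bounded coefficient)$\times D^{k}\sigma\times D^{\ell}w$ with $k\ge1$ and $k+\ell\le2m$. By \cref{eq:allowed-weight-derivatives}, $|D^k\sigma|\le Cq^k\sigma$. Expanding $D^\ell w=D^\ell(\sigma^{-1}v)$ again by Leibniz, using $|D^i\sigma^{-1}|\le Cq^i\sigma^{-1}$, gives $|D^\ell w|\le C\sigma^{-1}\sum_{i\le\ell}q^{\ell-i}|D^iv|$. The factors $\sigma$ and $\sigma^{-1}$ cancel, leaving terms bounded by $Cq^{k+\ell-i}|D^iv|$ with $i\le 2m-1$. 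Since $q\ge1$, we have $q^{k+\ell-i}\le q^{2m-i}$, and summing yields \cref{eq:commutator}.

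\textbf{Assumption (A3), upper bound.} Here the Leibniz expansion is applied without integrating by parts. Write $a(v,w)=\sum\int a_{\alpha\beta}\,\sigma D^\beta v\cdot\sigma^{-1}D^\alpha w$, which is an exact identity. Cauchy--Schwarz with $|\beta|,|\alpha|\le m$ bounds each term by $\|\sigma D^\beta v\|\,\|\sigma^{-1}D^\alpha w\|$. Inserting the powers $q^{m-|\beta|}\ge1$ and $q^{m-|\alpha|}\ge1$ gives \cref{eq:unshiftedformbound}.

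\textbf{Assumption (A3), lower bound (the main step).} Put $w=\sigma v\in V$. Expanding $D^\alpha(\sigma^2v)=\sigma D^\alpha w+\sum_{\gamma<\alpha}c\,(D^{\alpha-\gamma}\sigma)\,D^\gamma w$-type terms, I will show
\[
a(v,\sigma^2v)=a(w,w)+\mathcal{R},
\]
where every term of $\mathcal R$ has the form $\int(\text{bounded})\,q^{j+k}\,D^iw\,D^lw$ with $i,l\le m$, $j+k\ge1$, and total order $i+l+j+k\le 2m$. Coercivity gives $a(w,w)\ge c\|w\|_{H^m}^2$. Each remainder term is then bounded by $\epsilon\|w\|_{H^m}^2+C_\epsilon q^{2m}\|w\|^2$. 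This uses the interpolation inequality $q^{m-i}\|D^iw\|\le \epsilon\|D^mw\|+C_\epsilon q^{m}\|w\|$ for $i<m$, which holds uniformly for $q\ge1$ (as in \cref{eq:spatial-parameter-interpolation}), combined with Young's inequality. The conclusion is
\[
a(v,\sigma^2v)\ge c\|w\|_{H^m}^2-C\|q^m w\|^2.
\]
Finally, converting back via $D^j v=D^j(\sigma^{-1}w)$ with Leibniz gives
\[
\mathcal N_{\sigma,\Omega}(v)^2\le C\sum_{j\le m}q^{2(m-j)}\|D^jw\|^2\le C\bigl(\|w\|_{H^m}^2+q^{2m}\|w\|^2\bigr)
\]
after one more interpolation, which yields \cref{eq:unshiftedcoercive}. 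I expect this bookkeeping of the exponents to be the hardest part: the requirement is that every commutator term is lower order in the combined count (derivatives plus powers of $q$), so that the interpolation-with-parameter inequality absorbs it. All constants depend only on $\tau_*$, $\rho$, $\beta$, $q_c$, and the $W^{2m,\infty}$ norms of the coefficients.
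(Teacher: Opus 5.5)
Your proposal is correct and follows essentially the same route as the paper. You obtain (A1) from the representation theorem, (A2) from Leibniz with at least one derivative on the weight, and the upper bound in (A3) from Cauchy--Schwarz. For the lower bound you compare $a(v,\sigma^2v)$ with $a(w,w)$, $w=\sigma v$, bound the remainder by $\sum_{r+s<2m}q^{2m-r-s}\|D^rw\|\,\|D^sw\|$, absorb it by parameter-dependent interpolation, and convert back to $\mathcal N_{\sigma,\Omega}(v)$; beyond that, you only supply the exponent bookkeeping the paper leaves implicit.
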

\begin{proof}
The representation theorem for closed symmetric forms, together with the assumed graph estimate, gives \cref{ass:form}. Smooth multiplication preserves $V$ and $D(A)$. By Leibniz' rule, every term of $[A,\sigma]\sigma^{-1}v$ contains a derivative of the weight; the coefficient of $D^jv$, $j\leq2m-1$, is bounded by $Cq^{2m-j}$ by \cref{eq:allowed-weight-derivatives}. This proves \cref{ass:localization}.

The Cauchy--Schwarz inequality in \cref{eq:coefficientform} gives \cref{eq:unshiftedformbound}. For the lower bound, set $w=\sigma v$. Leibniz' rule and Sobolev interpolation give, for every $\eta>0$,\vspace{-2mm}
\begin{align*}
|a(v,\sigma^2v)-a(w,w)|
&\leq C\sum_{\substack{0\leq r,s\leq m\\r+s<2m}}
q^{2m-r-s}\|D^rw\|_{L^2(\Omega)}\|D^sw\|_{L^2(\Omega)}\\
&\leq\eta\|w\|_{H^m(\Omega)}^2+C(\eta)q^{2m}\|w\|_{L^2(\Omega)}^2.
\end{align*}
Choose $\eta>0$ smaller than the coercivity constant of $a$. Coercivity, together with the product rule and interpolation bound\vspace{-2mm}
$$
\mathcal N_{\sigma,\Omega}(v)^2
\leq C\bigl(\|w\|_{H^m(\Omega)}^2+q^{2m}\|w\|_{L^2(\Omega)}^2\bigr),\vspace{-2mm}
$$
then yields \cref{eq:unshiftedcoercive}.
\end{proof}

For variable-coefficient forms, the spatial analytic assumptions are supplied by \cref{prop:spatial-operators}, whereas the Carleman estimate and numerical stiffness compatibility must be verified separately. Weight preservation also depends on the choice of boundary condition. In particular, a general smooth weight does not preserve a Neumann domain, since $\partial_\nu(\sigma v)=\sigma\partial_\nu v+v\partial_\nu\sigma$.

The Dirichlet Laplacian satisfies the spatial analytic assumptions on convex polytopes and boxes.
\begin{corollary}\label{prop:analytic-realizations}
\Cref{ass:form,ass:localization,ass:weighted} hold for the Dirichlet Laplacian on a fixed bounded convex polytope in $\mathbb R^d$, $d=2,3$, or on a box in any fixed dimension, with $m=1$, $V=H_0^1(\Omega)$, and $a(v,w)=(\nabla v,\nabla w)_{L^2(\Omega)}$.
\end{corollary}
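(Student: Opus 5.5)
The plan is to reduce the corollary to \cref{prop:spatial-operators} wherever possible and to treat the one hypothesis that fails there separately. Take $m=1$, $V=H_0^1(\Omega)$, $a(v,w)=(\nabla v,\nabla w)_{L^2(\Omega)}$, and constant coefficients $a_{\alpha\beta}=\delta_{\alpha\beta}$ for $|\alpha|=|\beta|=1$ (and zero otherwise). These coefficients are symmetric and lie in $W^{2,\infty}$. Boundedness is immediate. Coercivity on $H_0^1(\Omega)$ follows from the Poincaré inequality on the bounded domain $\Omega$.

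The remaining hypothesis of \cref{prop:spatial-operators} is the full regularity statement $D(A)=H^2(\Omega)\cap H_0^1(\Omega)$ together with $\|v\|_{H^2}\le C\|\Delta v\|_{L^2}$. This is the main step. For a bounded convex domain in any dimension, in particular a convex polytope in $d=2,3$ or a box in any $d$, it is the classical $H^2$ regularity of the Dirichlet problem on convex domains (Grisvard, Theorem 3.2.1.2). The argument approximates $\Omega$ by smooth convex domains. One then uses the identity $\|D^2v\|_{L^2}^2=\|\Delta v\|_{L^2}^2-\int_{\partial\Omega}\mathcal B(\nabla v,\nabla v)$, in which the boundary curvature term has a favourable sign by convexity. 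This gives the bound $\|D^2v\|_{L^2}\le\|\Delta v\|_{L^2}$ uniformly, and one passes to the limit.

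For a box there is an elementary alternative. Expand in the sine basis $\prod_r\sin(k_r\pi x_r/L_r)$. Then $\|\partial_{r}\partial_{s}v\|^2$ is bounded by $\sum_k\lambda_k^2|\hat v_k|^2=\|\Delta v\|^2$ by Parseval, after checking that mixed derivatives expand in cosine/sine products that are orthogonal. Combining this with Poincaré controls the lower-order terms, so $\|v\|_{H^2}\le C\|Av\|_{L^2}$. The inclusion $H^2\cap H_0^1\subset D(A)$ is immediate from integration by parts, so the domain identification holds.

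With these facts in hand, \cref{prop:spatial-operators} yields \cref{ass:form,ass:localization,ass:weighted} for every smooth $\rho$ and finite $\tau_*$, in particular $\tau_*=4$. One subtlety should be checked in \cref{ass:localization}: multiplication by $\sigma^{\pm1}=q^{\tau}e^{\pm q\rho}$ must preserve $D(A)$. This holds because $\rho\in C^\infty(\overline\Omega)$, so $\sigma^{\pm1}v\in H^2$. Moreover $\sigma^{\pm1}v$ still vanishes on $\partial\Omega$, since a Dirichlet trace is preserved under multiplication by a smooth function; this is exactly the point that fails for Neumann conditions but not here. The commutator $[-\Delta,\sigma]\sigma^{-1}v=-2\nabla\log\sigma\cdot\nabla v-(\sigma^{-1}\Delta\sigma)v$ is bounded directly by $C(q\|\nabla v\|+q^2\|v\|)$ using \cref{eq:allowed-weight-derivatives}. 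This confirms \eqref{eq:commutator} explicitly.
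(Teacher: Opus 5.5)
Your proposal is correct and follows the paper's own argument: Grisvard's convex-domain theorem gives $D(A)=H^2(\Omega)\cap H_0^1(\Omega)$ and \eqref{eq:ellreg}, Poincar\'e's inequality gives coercivity, and \cref{prop:spatial-operators} then delivers \cref{ass:form,ass:localization,ass:weighted}; your sine-series argument for boxes is a valid but unnecessary alternative, since a box is convex. One small slip in your explicit check: you dropped the term that comes from differentiating $\sigma^{-1}$ in $\nabla(\sigma^{-1}v)$, so the correct commutator is $[-\Delta,\sigma]\sigma^{-1}v=-2\nabla\log\sigma\cdot\nabla v+\bigl(2|\nabla\log\sigma|^2-\sigma^{-1}\Delta\sigma\bigr)v$, but the extra coefficient is also $O(q^2)$, so the bound \eqref{eq:commutator} is unaffected.
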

\begin{proof}
Grisvard's convex-domain regularity theorem~\cite[Theorem~3.2.1.2]{Grisvard2011} gives $D(A)=H^2(\Omega)\cap H_0^1(\Omega)$ and \cref{eq:ellreg}. Poincar\'e's inequality and \cref{prop:spatial-operators} complete the verification.
\end{proof}

\subsection{Continuous Carleman estimate with matching time endpoints}

We now verify the continuous Carleman assumption for the heat operator. Since the auxiliary estimates are applied to periodic-in-time functions with matching endpoints, we state the estimate in that setting. In this subsection, integrals are over $\mathcal Q$ unless another domain is indicated.
\begin{proposition}\label{prop:continuousCE}
Consider the Dirichlet Laplacian on any domain in \cref{prop:analytic-realizations}, and fix a nonempty \(\omega_0\Subset\Omega\). Then there exist a negative smooth weight \(\rho\) and constants \(0<\beta\le1\), \(q_c\ge1\), and \(C>0\), determined only by \(d,\Omega,\omega_0\), such that \cref{eq:continuousabstract} holds for every \(T>0\), every \(q\) satisfying \cref{eq:qclass}, both signs of \(\epsilon\), and every \(v\in H^1(0,T;H)\cap L^2(0,T;D(A))\) with \(v(0)=v(T)\).

Here $m=1$, $\mathcal D_1v=\nabla v$, and the Carleman exponents are $(c_0,c_1;o)=(3,1;3)$.
\end{proposition}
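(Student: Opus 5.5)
We will adapt the classical Fursikov--Imanuvilov weighted-identity proof to the regularized, endpoint-matched factor $q(t)$. First take the standard Fursikov--Imanuvilov spatial function $\psi\in C^\infty(\overline\Omega)$ with $\psi>0$ in $\Omega$, $\psi=0$ on $\partial\Omega$, and $|\nabla\psi|\ge c>0$ on $\overline{\Omega\setminus\omega_1}$, where $\omega_1\Subset\omega_0$. Then set $\rho=e^{\lambda\psi}-e^{2\lambda\|\psi\|_\infty}$, so that $\rho\le-\rho_*<0$ and $\lambda$ will be fixed large once and for all. Because the convex polytope is only Lipschitz, the existence of such $\psi$ and the boundary sign condition $\partial_\nu\psi\le0$ need care. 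On boxes and convex polytopes one can take $\psi$ built from a smooth positive function with nonvanishing gradient near the boundary, e.g. a regularized distance combined with a product of linear defining functions. Alternatively, one can work on $v\in H^2\cap H^1_0$ directly and use that on convex domains the boundary term $\int(\partial_\nu w)^2\partial_\nu\rho$ has the right sign when $\partial_\nu\rho\le0$, which holds since $\rho$ attains its maximum on $\partial\Omega$. By density we may assume $v$ smooth in time and in $D(A)$ at each time.

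Next conjugate. Put $w=\theta v=e^{q\rho}v$. Then $\theta P_\epsilon v=P_1w+P_2w+Rw$, where
\[
P_1w=-\Delta w-q^2|\nabla\rho|^2w-\epsilon q'\rho\,w,\qquad
P_2w=\epsilon\partial_tw+2q\nabla\rho\cdot\nabla w+q(\Delta\rho)w,
\]
with the usual regrouping of the $\Delta\rho$ term, and $R$ collects lower-order terms. Expanding $\|P_1w+P_2w\|^2\ge2(P_1w,P_2w)$ and integrating by parts in $x$ and $t$ gives the usual positive terms $C\lambda^4\int q^3\phi^3|w|^2$ and $\lambda^2\int q\phi|\nabla w|^2$, up to contributions localized in $\omega_1$ where $\nabla\psi$ may vanish. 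Two features of the present setting must be tracked. First, the temporal integrations by parts produce no boundary terms, since $v(0)=v(T)$ and $q^{(j)}(0)=q^{(j)}(T)$ make $w$ and all weights periodic. Second, the terms involving $q'$ and $q''$ are bounded by $\beta q^2$ and $\beta^2q^3$ through \cref{eq:qclass}; they are therefore dominated by $q^3|w|^2$ or $q|\nabla w|^2$ once $q\ge q_c$ is large and $\beta\le1$ is small relative to $\lambda$. This replaces the role of $\xi'\le C\xi^2$ in the singular-weight proof.

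The third step removes the localized terms. The gradient term $\int_{\omega_1}q|\nabla w|^2$ is converted into $q^3|w|^2$ on $\omega_0$ by the standard cutoff argument: multiply by $\zeta^2 q\theta^2 v$ with $\zeta\in C_c^\infty(\omega_0)$ equal to $1$ on $\omega_1$, and use the equation. Finally, return from $w$ to $v$ via $\theta\nabla v=\nabla w-q w\nabla\rho$. Together with the definition of $\mathcal E_q$, this yields \cref{eq:continuousabstract} with exponents $(3,1;3)$ for both signs of $\epsilon$; the two signs are exchanged by $t\mapsto T-t$, which preserves the class \cref{eq:qclass}.

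The main obstacle is the boundary analysis on nonsmooth convex domains. The classical argument needs $H^2$ traces and the sign of $\partial_\nu\rho$ along each face, together with the absence of spurious terms at edges and corners. I would first handle this by approximating the convex polytope from inside by smooth convex domains, with uniform constants (Grisvard's approach), and then passing to the limit. The weight $\psi$ must be chosen uniformly along the approximating sequence for this to work.
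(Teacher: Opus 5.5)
\paragraph{Gap: the spatial weight.} Your argument breaks at the choice of spatial weight. The difficulty you flag is an obstruction, not a technicality.

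For $d\ge2$, no $\psi\in C^1(\overline\Omega)$ on a convex polytope or box can satisfy both $\psi=0$ on $\partial\Omega$ and $|\nabla\psi|\ge c>0$ on $\overline{\Omega\setminus\omega_1}$. Where two nonparallel faces meet, $\nabla\psi$ must be parallel to both unit normals. Hence $\nabla\psi=0$ at every vertex in two dimensions and along every edge in three. Products of linear defining functions or regularized distances inherit this defect.

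Approximation by smooth convex domains $\Omega_k$ cannot repair it either. Suppose $\psi_k$ had uniform $C^2$ bounds, $\psi_k=0$ on $\partial\Omega_k$, and a uniform lower gradient bound near $\partial\Omega_k$. Then the $\psi_k$ would converge in $C^1$ to exactly such an impossible $\psi$ on $\Omega$. Critical points of the weight on the boundary destroy the $q^3$ positivity there. They cannot be covered by an observation in $\omega_0\Subset\Omega$.

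Your alternative of working directly on $H^2(\Omega)\cap H_0^1(\Omega)$ with $\partial_\nu\rho\le0$ is the right treatment of the boundary term. However, it still relies on this nonexistent $\psi$. Incidentally, your $\rho$ attains its minimum, not its maximum, on $\partial\Omega$. The sign $\partial_\nu\rho\le0$ is nonetheless the one you need.

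\paragraph{Missing idea: a radial weight.} The weight need not vanish on $\partial\Omega$; the boundary flux only needs $\partial_\nu\psi\le0$ face by face. The paper takes a ball $B_*\Subset\omega_0$ centred at $x_*$ and the radial weight $\psi=-|x-x_*|^2$, $\phi=e^{\lambda\psi}$, $\rho=\phi-2$, with $\ell=q\rho$.
\begin{itemize}
\item Its only critical point is $x_*\in\omega_0$.
\item On every face, $\partial_\nu\psi=-2(x-x_*)\cdot\nu\le0$, because a convex domain is star-shaped about $x_*$. So the flux $2q\,\partial_\nu\phi\,|\partial_\nu w|^2$ (with your $w=\theta v$) is nonpositive.
\item The face integrals then follow from Green's identities on $H^2(\Omega)\cap H_0^1(\Omega)$, with no smooth approximation.
\item Since $D^2\psi=-2\,\mathrm{Id}$, the choice $\Psi=-\Delta\ell-8q\lambda\phi$ in the pointwise identity of \cite{fu2019carleman} gives the gradient matrix $4q\lambda\phi\,\mathrm{Id}+2q\lambda^2\phi\,\nabla\psi\otimes\nabla\psi$. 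This is positive everywhere, so only a zeroth-order term on $B_*$ remains, and your cutoff step for a local gradient term is unnecessary.
\end{itemize}

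The rest of your plan agrees with the paper: periodicity removes the temporal boundary terms, the $q'$ and $q''$ terms are absorbed by taking $\beta$ small after fixing $\lambda$ and then $q_c$ large, and time reflection handles $\epsilon=-1$.
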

\begin{proof}
We prove the estimate by choosing a suitable weight, specializing the pointwise Carleman inequality of \cite[Corollary~3.1]{fu2019carleman}, and then absorbing the lower-order terms.

Choose a ball $B_*\Subset\omega_0$ centred at $x_*$, and set
\[
\psi=-|x-x_*|^2,
\qquad
\phi=e^{\lambda\psi},
\qquad
\rho=\phi-2.
\]
We prove the estimate for $\epsilon=1$ by specializing the pointwise inequality of \cite[Corollary~3.1]{fu2019carleman} to the periodic weight $\ell=q\rho$.

First take $v$ smooth and periodic in time with values in $D(A)$, and set $z=\theta v$. Apply the cited inequality to $v$, with weighted unknown $z$, choosing
$$
p^{jk}=\delta_{jk},
\qquad
\Psi=-\Delta\ell-8q\lambda\phi,
\qquad
\Phi=0.
$$
The total time derivative integrates to zero by periodicity. The spatial flux on each face is $2q\partial_\nu\phi|\partial_\nu z|^2\le0$ by convexity, and the face integrals are justified by the Green identities on $H^2(\Omega)\cap H_0^1(\Omega)$.

The matrix $(c^{jk})$ in (3.5) of \cite{fu2019carleman} equals
$$
4q\lambda\phi\,\mathrm{Id}_{\mathbb R^d}
+2q\lambda^2\phi\,\nabla\psi\otimes\nabla\psi.
$$
Substituting this into (3.3)--(3.5) of \cite{fu2019carleman}, and then applying Young's inequality together with \cref{eq:qclass}, yields, with $C>0$ independent of $\lambda$,
\begin{align}\label{eq:heatcross}
\int q\lambda\phi|\nabla z|^2
+\int q^3\lambda^3\phi^3|\nabla\psi|^2
(\lambda|\nabla\psi|^2-C)z^2
\le C\|\theta P_1v\|_{L^2(\mathcal Q)}^2
+C(\lambda)\int\bigl(q^2+(\beta+\beta^2)q^3\bigr)z^2.
\end{align}
The constants in this estimate are independent of $\beta,q_c,T$, and the dependence on $\lambda$ is displayed explicitly.

Choose $\lambda$ so that $\lambda|\nabla\psi|^2\ge2C$ outside $B_*$. Then choose $\beta$ sufficiently small and $q_c$ sufficiently large to absorb the right-hand remainder outside $B_*$. The remaining zeroth-order terms on $B_*$ are bounded by
$
C\int_{\mathcal Q_{\omega_0}}q^3z^2$.
Returning from $z$ to $v$ proves \cref{eq:continuousabstract} with the exponents $(c_0,c_1;o)=(3,1;3)$.

Density extends the estimate to the stated space, and time reflection gives the case $\epsilon=-1$.
\end{proof}
\begin{remark}\label{rem:zeroth-order-example}
Let $b\in C^\infty(\overline\Omega)$ be nonnegative and set $A_b=-\Delta_D+b$ on a convex polytope as above. Bounded perturbation, coercivity, and the Dirichlet regularity estimate give $D(A_b)=H^2(\Omega)\cap H_0^1(\Omega)$ and \cref{eq:ellreg}, so \cref{prop:spatial-operators} applies. In \cref{prop:continuousCE}, the additional residual $bv$ is absorbed by the $q^3$ term after increasing $q_c$, with the same Carleman exponents. Taking the exact stiffness form $a_b(u,v)=(\nabla u,\nabla v)_{L^2(\Omega)}+(bu,v)_{L^2(\Omega)}$ for consistent $P_1$ elements then gives the same transfer result.
\end{remark}

\section{Cartesian difference schemes and numerical assumptions}\label[appendix]{sec:fd-verification}

We complete the proof of \cref{prop:fd}. All norms in this appendix are spatial.

{\bf Step 1.} In this step, we verify the moments and load lift.

The basis on the infinite uniform Kuhn grid is translation invariant and forms a partition of unity. Each interior nodal star is complete and centrally symmetric, including those adjacent to the boundary. Hence
\begin{align}\label{eq:fdmoments}
\int_\Omega\phi_i\,dx=\omega_i,\qquad
\int_\Omega(x-x_i)\phi_i\,dx=0.
\end{align}
In particular, $(J_hp)_i=p(x_i)$ for affine $p$.

On each simplex $K$, with barycentric coordinates $\lambda_i$, define
\begin{align*}
(\mathcal L_hf)|_K
=\sum_{i:\,x_i\in K\cap\Omega}f_i\big((d+2)\lambda_i-1\big).
\end{align*}
For each interior vertex $x_j$, the identities
\begin{align*}
\int_K\big((d+2)\lambda_i-1\big)\lambda_j\,dx
=\frac{\delta_{ij}|K|}{d+1},\qquad
\sum_{K\ni x_j}\frac{|K|}{d+1}=\omega_j
\end{align*}
give $J_h\mathcal L_h=\mathrm{Id}_{X_h}$. Local polynomial norm equivalence and \cref{eq:element-weight-comparison} give the load stability bound
\begin{align*}
\|\sigma\mathcal L_hf\|_{L^2(\Omega)}\leq C\|f\|_{X_h,\sigma}.
\end{align*}
The reconstruction satisfies the norm equivalence
\begin{align}\label{eq:fd-local-stability}
\|\sigma R_hz\|_{L^2(\Omega)}\asymp\|z\|_{X_h,\sigma}.
\end{align}
The same argument gives the local stability of $Q_h$:
\begin{align}\label{eq:fd-projection-stability}
\|Q_hv\|_{L^2(K)}\leq C\|v\|_{L^2(B_K)},
\end{align}
where $B_K$ is the smallest axis-aligned rectangular block containing $K$ and the supports of its active basis functions. The blocks have diameter $O(h)$, bounded aspect ratios, and bounded overlap. Together with \cref{eq:fdstiffness}, these estimates prove \cref{ass:interfaces,ass:loads}.

\smallskip

{\bf Step 2.}
In this step, we establish the weighted approximation estimates.

The first lemma gives a local affine approximation that respects the physical boundary, including at corners. Let $\mathcal P_1(B_K)$ be the space of affine polynomials on $B_K$, let $\mathcal F_K$ be the set of faces of $B_K$ contained in $\partial\Omega$, and define\vspace{-2mm}
$$
\mathcal P_{1,\mathcal F_K}(B_K)
:=\{p\in\mathcal P_1(B_K):p|_F=0\text{ for every }F\in\mathcal F_K\}.
$$
\begin{lemma}\label{lem:fd-boundary-affine}
For every $v\in H^2(B_K)$ with zero trace on each $F\in\mathcal F_K$, there exists $p_K\in\mathcal P_{1,\mathcal F_K}(B_K)$ such that\vspace{-2mm}
\begin{align}\label{eq:fd-local-approximation}
\|v-p_K\|_{L^2(B_K)}+h\|\nabla(v-p_K)\|_{L^2(B_K)}
\leq Ch^2\|D^2v\|_{L^2(B_K)},
\end{align}
where the constant $C$ depends only on the dimension and the grid-step comparability bound.
\end{lemma}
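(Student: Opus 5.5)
The plan is a Bramble--Hilbert argument with boundary constraints, built around a reference configuration. Since the grid steps are comparable and $B_K$ is an axis-aligned block of diameter $O(h)$ with bounded aspect ratio, affine rescaling $x\mapsto(x-x_K)/h$ maps $B_K$ onto a reference block $\widehat B$. The faces in $\mathcal F_K$ map to a subset $\widehat{\mathcal F}$ of the faces of $\widehat B$. Up to translation, reflection and scaling, only finitely many combinatorial types occur: the number of lattice cells spanned in each direction is bounded, and $\widehat{\mathcal F}$ is a set of faces. The step ratios range over a compact set. So it suffices to prove the estimate on reference blocks with a constant uniform over a compact family of geometries, and then scale back. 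The scaling gives the factors $h^2$ and $h$ in \cref{eq:fd-local-approximation}.

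On a fixed reference block $\widehat B$ with face set $\widehat{\mathcal F}$, consider the closed subspace $W=\{v\in H^2(\widehat B):v|_F=0,\ F\in\widehat{\mathcal F}\}$ and the finite-dimensional subspace $\mathcal P_{1,\widehat{\mathcal F}}\subset W$.

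I will use the standard compactness (Deny--Lions) argument to prove
\[
\inf_{p\in\mathcal P_{1,\widehat{\mathcal F}}}\|v-p\|_{H^1(\widehat B)}\le C\|D^2v\|_{L^2(\widehat B)},\qquad v\in W.
\]
Suppose not. Then there are $v_k\in W$, orthogonal in $H^2$ to $\mathcal P_{1,\widehat{\mathcal F}}$, with $\|v_k\|_{H^2}=1$ and $\|D^2v_k\|\to0$. Rellich compactness lets us pass to a subsequence converging in $H^1$. Because the second derivatives tend to zero, the convergence is in fact in $H^2$, and the limit $v$ is affine. Traces are continuous, so $v$ still vanishes on every face in $\widehat{\mathcal F}$. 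Hence $v\in\mathcal P_{1,\widehat{\mathcal F}}$. But $v$ is also orthogonal to this space and has unit norm, a contradiction. Taking $p_K$ to be the pullback of the $H^2$-orthogonal projection onto $\mathcal P_{1,\widehat{\mathcal F}}$ gives the lemma for a fixed geometry.

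The main obstacle is the last step, making the constant uniform over the admissible step ratios and corner configurations. The pullback of the trace-zero condition is exactly the condition on the reference faces, so the boundary is handled correctly. What remains is that the reference block's aspect ratios vary continuously. I would handle this by a further coordinatewise affine map onto the unit cube. This map has Jacobian bounds controlled by the comparability constant, so it changes the seminorms only by bounded factors and maps face sets to face sets. After that reduction there are only finitely many cases, corresponding to subsets of the $2d$ faces, and the final constant is their maximum.

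One point needs checking in the corner cases: if $\widehat{\mathcal F}$ contains opposite faces, the space $\mathcal P_{1,\widehat{\mathcal F}}$ may be $\{0\}$. The compactness argument still applies, because the limiting affine function must then vanish on two parallel faces and is therefore zero.
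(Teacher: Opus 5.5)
Your argument is correct, and its uniformity part coincides with the paper's. Both proofs rescale $B_K$ by a coordinatewise affine map onto the unit cube, so that the step-comparability bound enters only through bounded Jacobian factors. Both then take the maximum over the finitely many subsets of faces.

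The difference lies in how the face constraints are built into the Bramble--Hilbert estimate. You prove a constrained Deny--Lions lemma directly, by compactness on the closed subspace $W$ with kernel $\mathcal P_{1,\widehat{\mathcal F}}$. The paper instead applies the ordinary Bramble--Hilbert lemma to obtain some $\widetilde p\in\mathcal P_1(\widehat B)$, and then corrects $\widetilde p$ inside the finite-dimensional space $\mathcal P_1(\widehat B)$. Norm equivalence on the complement of the kernel of the face-trace map bounds the $H^1$ distance from $\widetilde p$ to $\mathcal P_{1,\widehat{\mathcal F}}$ by $C\bigl(\sum_{\widehat F}\|\widetilde p\|_{L^2(\widehat F)}^2\bigr)^{1/2}$. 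Since $\widehat v$ has zero traces, $\|\widetilde p\|_{L^2(\widehat F)}=\|\widehat v-\widetilde p\|_{L^2(\widehat F)}$, which the trace theorem controls.

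The paper's route keeps the standard lemma as a black box and shows explicitly that the constraints cost only a trace constant. Yours is self-contained and handles the constraint and the identification of the kernel in one step, at the price of a non-explicit constant.

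Your corner-case remark can be sharpened. $\mathcal P_{1,\widehat{\mathcal F}}=\{0\}$ as soon as $\widehat{\mathcal F}$ contains any two distinct faces, parallel or not; the paper singles out the nonparallel case. Your compactness argument covers all such cases without change.
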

\begin{proof}
Map $B_K$ to the unit cube $\widehat B$, and denote the transformed function and physical faces by $\widehat v$ and $\widehat{\mathcal F}_K$. By the Bramble-Hilbert lemma~\cite[Lemma~4.3.8]{Brenner2008}, there exists $\widetilde p\in\mathcal P_1(\widehat B)$ such that\vspace{-2mm}
\begin{align*}
\|\widehat v-\widetilde p\|_{H^1(\widehat B)}
\le C\|D^2\widehat v\|_{L^2(\widehat B)}.
\end{align*}
In the finite-dimensional space $\mathcal P_1(\widehat B)$, the kernel of the face-trace map is $\mathcal P_{1,\widehat{\mathcal F}_K}(\widehat B)$. Norm equivalence on its orthogonal complement therefore gives\vspace{-2mm}
\begin{align*}
\inf_{\widehat p\in\mathcal P_{1,\widehat{\mathcal F}_K}(\widehat B)}
\|\widetilde p-\widehat p\|_{H^1(\widehat B)}
&\le C\Bigl(\sum_{\widehat F\in\widehat{\mathcal F}_K}
\|\widetilde p\|_{L^2(\widehat F)}^2\Bigr)^{1/2}
\le C\|\widehat v-\widetilde p\|_{H^1(\widehat B)},
\end{align*}
where the last inequality uses the zero traces of $\widehat v$ together with the trace theorem. For an interior block, we may take $\widehat p=\widetilde p$. For a boundary block, we choose a minimizer $\widehat p$ in the preceding estimate. Applying the triangle inequality and scaling back to $B_K$ proves \cref{eq:fd-local-approximation}. The constants are uniform because the reference cube has only finitely many subsets of faces and the block side lengths are comparable to $h$.

Finally, if $B_K$ meets two nonparallel physical boundary faces, then the only affine function vanishing on both is zero. Hence $p_K=0$, and the same argument shows that the zero traces eliminate the affine kernel of the Hessian seminorm.
\end{proof}

The moment reconstruction then gives a global weighted estimate without using point values of $v$.
\begin{lemma}\label{lem:fd-weighted-approximation}
For every $v\in H^2(\Omega)\cap H_0^1(\Omega)$ and every allowed weight $\sigma$ with $hq\leq1$ at the fixed time,
\begin{align}\label{eq:fdapproxallD}
\|\sigma(v-Q_hv)\|_{L^2(\Omega)}+h\|\sigma\nabla(v-Q_hv)\|_{L^2(\Omega)}
\leq Ch^2\|\sigma D^2v\|_{L^2(\Omega)},
\end{align}
where the constant is independent of $h,q,\sigma$ and $v$.
\end{lemma}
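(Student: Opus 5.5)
The argument is a localized Bramble--Hilbert estimate. On each Kuhn simplex $K$ we subtract the boundary-compatible affine function $p_K$ from \cref{lem:fd-boundary-affine}, show that $Q_h$ reproduces it on $K$, and control the rest by local stability of $Q_h$. Freezing the weight elementwise through \cref{eq:element-weight-comparison} then yields the weighted estimate.

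\textbf{Step 1: reproduction of affine functions.} For $p\in\mathcal P_{1,\mathcal F_K}(B_K)$ we show that $Q_hp=p$ on $K$. By \cref{eq:fdmoments}, $(J_hp)_i=p(x_i)$ for every interior node $x_i$ whose basis support lies in $B_K$. The active basis functions on $K$ belong to interior vertices of $K$, and their supports lie in $B_K$ by the definition of $B_K$. Hence
\[
Q_hp|_K=\sum_i p(x_i)\phi_i|_K .
\]
This is the nodal interpolant of $p$ on $K$ with the boundary vertices omitted. Since $p$ vanishes on the physical faces of $B_K$, the omitted boundary vertices carry the value $0$. The nodal interpolant of an affine function is the function itself, so $Q_hp=p$ on $K$. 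The main care is at corners and at simplices that touch $\partial\Omega$ only at a vertex or an edge. In those cases we must check that every boundary vertex of $K$ lies on a physical face of $B_K$, so that $p$ vanishes there. This holds because $B_K\supset K$ and the grid faces are aligned with $\partial\Omega$.

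\textbf{Step 2: local estimate.} Write $v-Q_hv=(v-p_K)-Q_h(v-p_K)$ on $K$. The $L^2$ part follows from \cref{eq:fd-projection-stability} together with \cref{eq:fd-local-approximation}:
\[
\|v-Q_hv\|_{L^2(K)}\le Ch^2\|D^2v\|_{L^2(B_K)} .
\]
For the gradient, the term $\nabla(v-p_K)$ is bounded directly by \cref{eq:fd-local-approximation}. The piecewise affine term $Q_h(v-p_K)$ is handled by a local inverse estimate followed by \cref{eq:fd-projection-stability} and \cref{eq:fd-local-approximation}:
\[
\|\nabla Q_h(v-p_K)\|_{L^2(K)}\le Ch^{-1}\|Q_h(v-p_K)\|_{L^2(K)}\le Ch^{-1}\|v-p_K\|_{L^2(B_K)}\le Ch\|D^2v\|_{L^2(B_K)}.
\]

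\textbf{Step 3: weighting and summation.} Fix $x_K\in K$. Because $B_K$ has diameter $O(h)$ and $hq\le1$, \cref{eq:allowed-weight-derivatives} gives
\[
e^{-C}\le\sigma/\sigma(x_K)\le e^{C}\quad\text{on }B_K .
\]
We multiply the local estimate of Step 2 by $\sigma(x_K)$, replace $\sigma(x_K)$ by $\sigma$ on both sides at the cost of a constant factor, and square. Summing over $K$ and using the bounded overlap of the blocks $B_K$ gives \cref{eq:fdapproxallD}. The constant depends only on $d$ and the step-comparability bound, and not on $h,q,\sigma,v$.

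The expected obstacle is Step 1 for boundary simplices, namely the claim that $Q_h$ reproduces exactly those affine functions that vanish on the physical faces of $B_K$. If the purely geometric check fails for some corner configuration, we will instead apply local stability and \cref{lem:fd-boundary-affine} to $v$ directly on an enlarged block. There $v$ has zero trace, and the argument at the end of \cref{lem:fd-boundary-affine} removes the affine kernel.
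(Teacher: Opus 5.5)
Your proposal is correct and follows the paper's proof essentially step for step. The shared steps are: the reproduction $Q_hp_K=p_K$ on $K$ (moment identities at interior vertices, vanishing of $p_K$ at boundary vertices), then local stability of $Q_h$ together with the inverse estimate and \cref{eq:fd-local-approximation}, and finally freezing $\sigma$ on each $B_K$ and summing with bounded overlap. The fallback you mention is never needed: $B_K\subset\overline\Omega$ is axis-aligned, so every point of $B_K\cap\partial\Omega$, in particular every boundary vertex of $K$, lies on a face of $B_K$ contained in $\partial\Omega$.
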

\begin{proof}
Fix $K$. Choose $p_K$ as in \cref{lem:fd-boundary-affine}. The moment identities \cref{eq:fdmoments} reproduce $p_K$ at every interior vertex of $K$, while the boundary vertices have value zero by the choice of $p_K$. Hence $Q_hp_K=p_K$ on $K$, and therefore\vspace{-2mm}
$$
(v-Q_hv)|_K=(v-p_K)|_K-Q_h(v-p_K)|_K.
$$
Since only values on $B_K$ enter this local action of $Q_h$, \cref{eq:fd-projection-stability}, the finite-element inverse estimate, and \cref{eq:fd-local-approximation} give\vspace{-2mm}
\begin{align*}
\|v-Q_hv\|_{L^2(K)}+h\|\nabla(v-Q_hv)\|_{L^2(K)}
&\le C\bigl(\|v-p_K\|_{L^2(B_K)}
+h\|\nabla(v-p_K)\|_{L^2(B_K)}\bigr)\\
&\le Ch^2\|D^2v\|_{L^2(B_K)}.
\end{align*}
It remains to pass from this local estimate to the global one. Since $hq\le1$, \cref{eq:allowed-weight-derivatives} makes $\sigma$ uniformly comparable to any of its values on each $B_K$. Multiplying the local estimate by such a value, squaring, and summing over $K$ therefore yields \cref{eq:fdapproxallD}, by the bounded overlap of the blocks. No point values of $v$ are used, so the argument applies in every fixed dimension.
\end{proof}

Since $hq\leq1$, \cref{lem:fd-weighted-approximation} proves \cref{Papprox} with $a_E=Ch$ and $a_0=Ch^2$.

\smallskip

{\bf Step 3.}
In this step, we verify the weighted test and observation estimates.

The diagonal map $\mathscr T_{\sigma,h}$ is $T$-periodic, positive and self-adjoint, and \cref{eq:allowed-weight-derivatives,eq:fd-local-stability} give \cref{eq:test-norm,eq:testtime}. Moreover,
$R_h\mathscr T_{\sigma,h}z=I_h(\sigma^2R_hz)$, where $I_h$ is nodal interpolation. On each element, normalize the weight by $\sigma_K$ from \cref{eq:element-weight-comparison}. For $hq\leq\delta_{\rm num}$, this normalized weight is uniformly comparable to one and its derivatives of order $j\leq2$ are $O(q^j)$. The local superapproximation argument in \cite[Theorem~2.1]{Demlow2010}, rescaled to a unit-size element with weight scale $(h_Kq)^{-1}$, yields
\begin{align*}
&\|\sigma^{-1}\!(R_h\mathscr T_{\sigma,h}z\!-\!\sigma^2R_hz)\|_{L^2(\Omega)}\! +h\|\sigma^{-1}\!\nabla(R_h\mathscr T_{\sigma,h}z\!-\!\sigma^2R_hz)\|_{L^2(\Omega)} \!\leq\! Ch^2\big(q\|\sigma\nabla R_hz\|_{L^2(\Omega)}\!
+\!q^2\|\sigma R_hz\|_{L^2(\Omega)}\big).
\end{align*}
Here the interpolated product is smooth on each simplex, so the estimate holds in every fixed dimension. Since $hq\leq1$, it implies \cref{eq:testsuper}. Finally, for small $h$, every vertex of an element meeting $\omega_0$ lies in $\omega$; local norm equivalence gives \cref{eq:observationinterface}.

\section{Consistency for the control limit}\label[appendix]{sec:control-consistency}

We record the consistency properties used in \cref{prop:control-convergence}. Throughout this appendix, $T$, $y_0$, and $b$ are fixed; spatial limits are taken over the admissible meshes, and temporal limits over $\delta t=T/N$ with $N\in\mathbb N$. Joint limits impose no coupling between $h$ and $\delta t$. The definitions give \cref{eq:conv-pairings}, and the reconstructions satisfy
\begin{align}\label{eq:conv-consistency}
\|E_hz-R_hz\|_{L^2(\Omega)}\le Ch\|R_hz\|_{H^1(\Omega)}.
\end{align}
For finite differences, this estimate follows by scaling on each dual-cell patch, since $z_i-R_hz$ vanishes for constant nodal data; the same bound on the uncovered boundary strip of width $O(h)$ follows from the zero-trace Poincar\'e estimate. For finite elements, we simply have $E_h=R_h$. Finally, in both realizations the exact stiffness identity\vspace{-2mm}
\begin{equation}\label{eq:conv-stiffness}
(A_hz,w)_{X_h}=a(R_hz,R_hw)\vspace{-2mm}
\end{equation}
holds, either by definition or by \cref{eq:fdstiffness}.

We now record the consistency properties of the discretized data. For finite elements, let $b_{h,\delta t}=b^n$ on each time cell; for finite differences, let $b_{h,\delta t}=b_i^n$ on $(t_{n-1},t_n]\times D_i$, extended by zero. Standard projection and cell-average approximation then yield\vspace{-2mm}
$$
\|y_{0,h}\|_{X_h}\le\|y_0\|_H\vspace{-2mm}
$$
and\vspace{-2mm}
\begin{align}\label{eq:conv-data-consistency}
\lim_{h\to0}\|E_hy_{0,h}-y_0\|_H=0.\vspace{-1mm}
\end{align}
The averaged potential satisfies $\|b_{h,\delta t}\|_{L^\infty(\mathcal Q)}\le M$ and
\begin{align}\label{eq:conv-potential-consistency}
\lim_{h,\delta t\to0}\|b_{h,\delta t}-b\|_{L^2(\mathcal Q)}=0,
\end{align}
and, on the $n$th time cell, we have the exact pairing
$$
(\mathsf V_h^nz,w)_{X_h}=(b_{h,\delta t}E_hz,E_hw)_H.
$$

We conclude by recording the consistency of the test functions. For a fixed finite sum $\phi(t,x)=\sum_\ell\eta_\ell(t)\psi_\ell(x)$ with $\eta_\ell\in C^\infty([0,T])$ and $\psi_\ell\in C_c^\infty(\Omega)$, set $\phi_h^n=I_h\phi(t_n)$ and $\Phi_{h,\delta t}^-(t)=\phi_h^{n-1}$ on $(t_{n-1},t_n]$. Here $I_h$ denotes nodal interpolation for finite elements and nodal sampling for finite differences. Spatial interpolation and time sampling yield
\begin{align}\label{eq:conv-test-consistency}
\lim_{h,\delta t\to0}\|R_h\Phi_{h,\delta t}^--\phi\|_{L^2(0,T;V)}=0.
\end{align}
The reconstruction using $E_h$ satisfies\vspace{-2mm}
\begin{align*}
\lim_{h,\delta t\to0}\|E_h\Phi_{h,\delta t}^--\phi\|_{L^\infty(\mathcal Q)}=0.\vspace{-2mm}
\end{align*}
The time difference quotient, defined piecewise on $(t_{n-1},t_n]$ by the expression below, satisfies\vspace{-2mm}
\begin{align*}
\lim_{h,\delta t\to0}
\biggl\|E_h\biggl(\frac{\phi_h^n-\phi_h^{n-1}}{\delta t}\biggr)
-\partial_t\phi\biggr\|_{L^2(0,T;H)}=0.\vspace{-2mm}
\end{align*}
At the endpoints, spatial interpolation gives
\begin{align*}
\lim_{h\to0}\|E_hI_h\phi(\tau)-\phi(\tau)\|_H=0,
\qquad \tau\in\{0,T\}.
\end{align*}
Finally, the potential and test-function limits give
\begin{align*}
\lim_{h,\delta t\to0}\|b_{h,\delta t}E_h\Phi_{h,\delta t}^--b\phi\|_{L^2(\mathcal Q)}=0.
\end{align*}

\section{Proofs of the filtering estimates}\label{sec:filter-proofs}

We use the notation and hypotheses of \Cref{sec:filtering}. The proofs are given in two parts: the reverse norm estimate in \Cref{sec:filter-reverse-proof}, and the complementary-penalty construction in \Cref{sec:filter-control-proof}.

\subsection{Reverse norm estimate}\label{sec:filter-reverse-proof}

We first prove that terminal adjoint data in the filtered subspace remain controlled by the initial adjoint norm.
\begin{proof}[Proof of \cref{lem:filter-reverse}]
All norms and inner products in this proof are those of $X_h$. For $z\ne0$, put
$$
\nu(z)=\frac{(A_hz,z)}{\|z\|^2}.
$$
\textbf{One resolvent step.}
Let $V=V^*$ with $\|V\|\le M$, and let
$$
x=[\mathrm{Id}+\delta t(A_h+V)]z.
$$
Normalize $\|z\|=1$, set $r=\nu(z)$ and $b=(Vz,z)$, and define the components orthogonal to $z$ by
$$
v=(A_h-r\mathrm{Id})z,\qquad w=(V-b\mathrm{Id})z.
$$
Put $\xi=v+w$ and $\gamma=1+\delta t(r+b)$. Then $\xi\perp z$, $\|w\|\le M$, $x=\gamma z+\delta t\,\xi$, and
$$
\|x\|^2=\gamma^2+(\delta t)^2\|\xi\|^2.
$$
Since $A_h\ge0$, completion of the square gives
\begin{align}\label{9.26-eq17}
	(A_hx,x)-r\|x\|^2
	&=2\gamma\delta t(v,\xi)+(\delta t)^2((A_h-r\mathrm{Id})\xi,\xi)\notag\\
	&\ge\delta t(2\gamma-\delta t\,r)\|\xi\|^2-2\gamma\delta t(w,\xi)\\
	&\ge-\frac{\delta t\,\gamma^2M^2}{2\gamma-\delta t\,r},\notag
\end{align}
where $2\gamma-\delta t\,r\ge2(1-\delta t M)>0$. Dividing \eqref{9.26-eq17} by $\|x\|^2$ and using $\|x\|^2\ge\gamma^2$, we obtain
\begin{align}\label{eq:filter-quotient-step}
	\nu(z)\le\nu(x)+\frac{\delta t\,M^2}{2(1-\delta t M)}
	\le\nu(x)+\delta t\,M^2.
\end{align}

\textbf{Iteration along the backward Euler recursion.}
Put $z_j=p^{N-j}$ and $V_j=\mathsf V_h^{N-j+1}$, so that
\begin{equation}\label{9.26-eq18}
[\mathrm{Id}+\delta t(A_h+V_j)]z_j=z_{j-1}.
\end{equation}
If $p^N\ne0$, all $z_j$ are nonzero, since each resolvent is invertible. Writing $\nu_j=\nu(z_j)$, \cref{eq:filter-quotient-step} gives
$$
\nu_j\le\nu_0+j\delta t M^2.
$$

Applying \(S_0=(\mathrm{Id}+\delta t A_h)^{-1}\) to both sides of \eqref{9.26-eq18} gives
\[
z_j=S_0z_{j-1}-\delta t\,S_0V_jz_j.
\]
Taking norms and using the reverse triangle inequality
$
\|a-b\|\ge \|a\|-\|b\|$,
we obtain
\begin{equation}\label{9.26-eq19}
\|z_j\|
=\|S_0z_{j-1}-\delta t\,S_0V_jz_j\|
\ge \|S_0z_{j-1}\|-\delta t\,\|S_0V_jz_j\|.
\end{equation}
Since $\|S_0\|\le1$ and $\|V_j\|\le M$, we have
$$
\|S_0V_jz_j\|
\le \|S_0\|\,\|V_jz_j\|
\leq M\|z_j\|.
$$
This, together with \eqref{9.26-eq19}, implies\vspace{-2mm}
\begin{equation}\label{9.26-eq20}
\|z_j\|
\ge \|S_0z_{j-1}\|-\delta t\,M\|z_j\|.\vspace{-2mm}
\end{equation}
Adding $\delta t\,M\|z_j\|$ to both sides yields\vspace{-2mm}
$$
(1+\delta t\,M)\|z_j\|
\ge \|S_0z_{j-1}\|.\vspace{-2mm}
$$
Thus\vspace{-2mm}
\begin{equation}\label{9.26-eq21}
\|z_j\|
\ge \frac{\|S_0z_{j-1}\|}{1+\delta t\,M}.\vspace{-2mm}
\end{equation}

It remains to estimate $\|S_0z_{j-1}\|$ from below. By the spectral lower bound for $S_0$,
$$
\|S_0x\|\ge \frac{\|x\|}{1+\delta t\,\nu(x)},
\qquad x\ne0.
$$
Taking $x=z_{j-1}$ gives\vspace{-2mm}
\begin{equation}\label{9.26-eq22}
\|S_0z_{j-1}\|
\ge \frac{\|z_{j-1}\|}{1+\delta t\,\nu(z_{j-1})}
=
\frac{\|z_{j-1}\|}{1+\delta t\,\nu_{j-1}}.
\end{equation}

Combining \cref{9.26-eq21,9.26-eq22}, we obtain\vspace{-2mm}
\begin{equation}\label{9.26-eq23}
\|z_j\|\ge\frac{\|z_{j-1}\|}{(1+\delta t M)(1+\delta t\,\nu_{j-1})}.
\end{equation}

\textbf{Bounding the norm ratio.}
Taking logarithms of \cref{9.26-eq23} and summing, we obtain\vspace{-2mm}
\begin{align*}
	\log\frac{\|z_N\|}{\|z_0\|}
	&\ge-MT-\delta t\sum_{j=1}^N\nu_{j-1}
	\ge-MT-T\nu_0-\frac12M^2T^2.
\end{align*}
Since $z_0=p^N\in F$ gives $\nu_0\le\Lambda_F$, this proves \cref{eq:filter-reverse}. The zero terminal datum is immediate.
\end{proof}

\subsection{Complementary terminal penalty and filtered control}\label{sec:filter-control-proof}

We next prove the filtered-control result by combining the reverse estimate with the relaxed observability inequality.

\begin{proof}[Proof of \cref{cor:filtered-control}]
Write $P=\Pi_F$, $Q=\mathrm{Id}_{X_h}-P$, and $\varepsilon=e^{-d_0S}$. The proof is divided into four steps: we first derive a complementary observability inequality, then introduce a penalized functional and identify its minimizer, next construct the control and estimate the terminal state, and finally estimate the penalization parameter $\eta_F$.

\medskip

\textbf{Step 1: A complementary observability inequality.}
For a terminal datum $z\in X_h$, let $p[z]$ solve \cref{eq:adjoint}, and define
$$
\mathscr Rz=p[z]^0,\qquad
\mathscr Oz=(B_h^*p[z]^{n-1})_{n=1}^N.
$$
By \cref{eq:adjointenergy}, the operator $\mathscr R$ satisfies
$$
\|\mathscr R\|\le L_{T,M}:=e^{2MT}.
$$
Moreover, \cref{eq:filter-reverse} applied to $Pz\in F$ gives
\begin{equation}\label{9.26-eq24}
\|Pz\|_{X_h}\le K_F\|\mathscr RPz\|_{X_h}
\le K_F\bigl(\|\mathscr Rz\|_{X_h}+L_{T,M}\|Qz\|_{X_h}\bigr).
\end{equation}
This, together with  the orthogonality of $P$ and $Q$, implies
\begin{equation}\label{9.26-eq25}
\|z\|_{X_h}^2
\le 2K_F^2\|\mathscr Rz\|_{X_h}^2
+(1+2L_{T,M}^2K_F^2)\|Qz\|_{X_h}^2.
\end{equation}
Set
$
\eta_F^2=\varepsilon^2(1+2L_{T,M}^2K_F^2).
$
Substituting \eqref{9.26-eq25} into \cref{eq:mainO} and absorbing the term $2C_0\varepsilon^2K_F^2\le1/2$ yields the complementary observability inequality
\begin{align}\label{eq:filter-complementary-observability}
	\|\mathscr Rz\|_{X_h}^2
	&\le2C_0\bigl(\|\mathscr Oz\|_{\ell_{\delta t}^2(U_h)}^2+\eta_F^2\|Qz\|_{X_h}^2\bigr).
\end{align}

\medskip

\textbf{Step 2: The penalized functional and its minimizer.}
Consider on $X_h$ the functional
\begin{align}\label{eq:filter-functional}
	J_F(z)=\tfrac12\|\mathscr Oz\|_{\ell_{\delta t}^2(U_h)}^2
	+\tfrac12\eta_F^2\|Qz\|_{X_h}^2-(y_{0,h},\mathscr Rz)_{X_h}.
\end{align}
Its quadratic part is positive definite. Indeed, if both quadratic terms vanish, then \cref{eq:filter-complementary-observability} gives $\mathscr Rz=0$, and the invertibility of the resolvents implies $z=0$. Since $X_h$ is finite-dimensional, $J_F$ therefore has a unique minimizer $\widehat z\in X_h$. Its optimality condition reads
\begin{align*}
	(\mathscr O\widehat z,\mathscr Oz)_{\ell_{\delta t}^2(U_h)}
	+\eta_F^2(Q\widehat z,Qz)_{X_h}
	=(y_{0,h},\mathscr Rz)_{X_h},
	\qquad z\in X_h.
\end{align*}

\medskip

\textbf{Step 3: Construction of the control and estimate of the terminal state.}
Choose the control
$
u^n=-B_h^*p[\widehat z]^{n-1}$, $1\le n\le N$,
and let $y$ solve \cref{eq:forward}. Combining the optimality condition with the duality identity \cref{eq:trueD} gives
$
y^N=\eta_F^2Q\widehat z,
$
and hence $Py^N=0$. It remains to bound the control and the terminal state. Taking $z=\widehat z$ in the optimality condition yields
\begin{equation}\label{9.26-eq26}
\|\mathscr O\widehat z\|_{\ell_{\delta t}^2(U_h)}^2
+\eta_F^2\|Q\widehat z\|_{X_h}^2
=(y_{0,h},\mathscr R\widehat z)_{X_h}.
\end{equation}
By \cref{eq:filter-complementary-observability}, the right-hand side of \eqref{9.26-eq26} is bounded by
$
\|y_{0,h}\|_{X_h}
\bigl(2C_0(\|\mathscr O\widehat z\|_{\ell_{\delta t}^2(U_h)}^2
+\eta_F^2\|Q\widehat z\|_{X_h}^2)\bigr)^{1/2}.
$
Therefore,
\begin{equation}\label{9.26-eq27}
\|\mathscr O\widehat z\|_{\ell_{\delta t}^2(U_h)}^2
+\eta_F^2\|Q\widehat z\|_{X_h}^2
\le2C_0\|y_{0,h}\|_{X_h}^2.
\end{equation}
Since $u^n=-B_h^*p[\widehat z]^{n-1}$, it follows from \eqref{9.26-eq27} that
$$
\|u\|_{\ell_{\delta t}^2(U_h)}^2+\eta_F^2\|Q\widehat z\|_{X_h}^2
\le2C_0\|y_{0,h}\|_{X_h}^2.
$$
In particular,
$$
\|y^N\|_{X_h}
=\eta_F^2\|Q\widehat z\|_{X_h}
\le\sqrt{2C_0}\,\eta_F\|y_{0,h}\|_{X_h}.
$$

\medskip

\textbf{Step 4: Estimate of $\eta_F$.}
Finally, since $K_F\ge1$ and \cref{eq:filter-condition} holds, we have
$$
\eta_F\le\sqrt{1+2L_{T,M}^2}\,e^{D_{T,M}}e^{-(1-\vartheta)d_0S}.
$$
Combining this with the bounds obtained in Step 3 proves
\cref{eq:filtered-control,eq:filtered-terminal}.
\end{proof}

\bibliographystyle{siam-paper-no}
\bibliography{references}
\end{document}